\def\acts{\curvearrowright}
\numberwithin{equation}{section}
\newcounter{AbcT}
\newtheorem {Theoremintro}    {Theorem}
\newtheorem {Theorem}    {Theorem}[section]
\newtheorem* {Theorem2}    {Theorem 2}
\newtheorem* {Theorem5}    {Theorem 1(a)}
\newtheorem* {Theorem7}    {Theorem 1(b)}
\newtheorem* {Theorem8}    {Theorem 1(c)}
\newtheorem* {Theorem9}    {Theorem 1(d)}
\newtheorem* {Theorem10}    {Theorem 3}
\def\acts{\curvearrowright}
\newtheorem {Problem}    [Theorem]{Problem}
\newtheorem {Question}    [Theorem]{Question}
\newtheorem {Lemma}      [Theorem]    {Lemma}
\newtheorem {Corollary}   [Theorem] {Corollary}
\newtheorem {Proposition}[Theorem]    {Proposition}
\theoremstyle{remark}
\newtheorem {Remark}		 [Theorem]    {\bf{Remark}}
\newtheorem {Remarks}		 [Theorem]    {\bf{Remarks}}
\newtheorem {Definition} [Theorem]    {\bf{Definition}}
\newcommand{\Amen}{\mathrm{Amen}}
\newcommand{\Fin}{\mathrm{Fin}}
\newcommand{\Cay}{\mathit{Cay}}
\newcommand{\Cayu}{\mathit{Cay_{uo}}}
\newcommand{\defi}{\mathit{def}}
\newcounter{DM@bibnum}
\newcommand {\abtf}{{\rm ab,tf}}
\newcommand{\cost}{\mathcal{C}}
\newcommand {\Z} {{\mathbb Z}}
\newcommand{\la}{\langle}
\newcommand{\ra}{\rangle}
\def\deg{{\rm deg\,}}
\def\NSL_2{{\mathcal N SL_2}}
\def\eps{\varepsilon}
\def\phi{\varphi}
\def\calC{{\mathcal C}}
\def\calI{{\mathcal I}}
\def\calJ{{\mathcal J}}
\def\calT{{\mathcal T}}
\def\hbar{\bar h}
\def\dbN{{\mathbb N}}
\def\dbR{{\mathbb R}}
\def\dbZ{{\mathbb Z}}
\newcommand{\sub}{\subseteq}
\newcommand{\iv}{^{-1}}
\newcommand{\lla}{\la\!\la}
\newcommand{\rra}{\ra\!\ra}
\def\skv{{\vskip .12cm}}
\begin{document}

\title{The Tarski numbers of groups}
\author{Mikhail Ershov}
\address{University of Virginia}
\email{ershov@virginia.edu}
\author{Gili Golan}
\address{Bar-Ilan University}
\email{gili.golan@math.biu.ac.il}

\author{Mark Sapir}
\address{Vanderbilt University}
\email{m.sapir@vanderbilt.edu}
\thanks{The work of the first author was partially supported by the NSF grant DMS-1201452
and the Sloan Research Fellowship grant BR 2011-105. The work of the second author was partially supported by BSF grant T-2012-238 and the Ministry of Science and Technology of Israel. The work of the third author was partially supported by NSF DMS-1318716, DMS-1261557 and BSF 2010295.}

\subjclass[2000]{Primary 43A07, 20F65
Secondary 20E18, 20F05, 20F50}
\keywords{Tarski number, paradoxical decomposition, amenability, $L^2$-Betti number, cost, Golod-Shafarevich groups}

\begin{abstract}
The Tarski number of a non-amenable group $G$ is the minimal number of pieces
in a paradoxical decomposition of $G$. In this paper we investigate how
Tarski numbers may change under various group-theoretic operations. Using these
 estimates and known properties of Golod-Shafarevich groups, we show that
the Tarski numbers of $2$-generated non-amenable groups can be arbitrarily large.
We also use the cost of group actions
 to show that there exist groups with Tarski numbers $5$ and 6.
These provide the first examples of non-amenable groups without free subgroups
whose Tarski number has been computed precisely.
\end{abstract}
\maketitle

\section{Introduction}

Recall the definition of a \emph{paradoxical decomposition} of a group.

\begin{Definition}\rm
\label{d1}
A group $G$ admits a \emph{paradoxical decomposition}
if there exist positive integers $m$ and $n$, disjoint subsets $P_1,\ldots, P_m,Q_1,\ldots, Q_n$ of $G$
and elements $g_1,\ldots, g_m$, $h_1,\ldots, h_n$ of $G$ such that
\begin{equation}\label{eq1}G=\bigcup_{i=1}^m P_ig_i=\bigcup_{j=1}^n Q_jh_j.\end{equation}
\end{Definition}
It is well known \cite{Wa} that $G$ admits a paradoxical decomposition if and only if it is non-amenable.
The minimal possible value of $m+n$ in a paradoxical decomposition of $G$
is called the \emph{Tarski number} of $G$ and denoted by $\mathcal T(G)$.

The definition stated above (with the elements $g_1,\dots,g_m,h_1,\dots,h_n$ acting on the left) appears both in \cite{Wa} and \cite{Sa}. A slightly different definition of a paradoxical decomposition
(see, for example, \cite{CSGH}) requires the sets $P_1,\ldots, P_m,$ $Q_1,\ldots, Q_n$ to cover the entire group $G$ and each of the unions $\bigcup_{i=1}^m P_ig_i$ and $\bigcup_{j=1}^n Q_jh_j$ to be disjoint. This alternative definition leads to the same notion of Tarski number:
this follows from the proof of \cite[Proposition~1.2]{RY} and Remark \ref{1} below, but for completeness we
will prove the equivalence of the two definitions of Tarski numbers in Appendix~A.

It is clear that for any paradoxical decomposition we must have $m\geq 2$ and $n\geq 2$,
so the minimal possible value of Tarski number is $4$. By a theorem of J\'onsson and Dekker (see, for example, \cite[Theorem~5.8.38]{Sa}),  $\calT(G)=4$
if and only if $G$ contains a non-Abelian free subgroup.

The problem of describing the set of Tarski numbers of groups has been formulated in \cite{CSGH}, and the following results have been proved there:
\newpage
\begin{Theorem}\label{t:csgh} $\empty$
\begin{itemize} \item[(i)] The Tarski number of any torsion group is at least $6$.
\item[(ii)] The Tarski number of any non-cyclic free Burnside group of odd exponent $\ge 665$ is between $6$ and $14$.
\end{itemize}
\end{Theorem}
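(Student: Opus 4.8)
The plan is to establish the lower bound $\calT(G)\ge 6$ for torsion groups first, and then deduce Theorem~\ref{t:csgh}(ii) from it together with Adian's results on free Burnside groups. For part~(i) we may assume $G$ is non-amenable (otherwise $\calT(G)=\infty$), so that $4\le\calT(G)<\infty$, and it suffices to exclude the values $4$ and $5$. The value $4$ is ruled out by the J\'onsson--Dekker theorem quoted above, since a torsion group contains no non-Abelian free subgroup. The substance of part~(i) is therefore to exclude $\calT(G)=5$, and here is how I would do it.

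Suppose $\calT(G)=5$ and fix a paradoxical decomposition realising it. Since $m,n\ge 2$ we must have $\{m,n\}=\{2,3\}$, so after interchanging the two families we may assume $m=2$: thus $G=P_1g_1\cup P_2g_2$, the five sets $P_1,P_2,Q_1,Q_2,Q_3$ are pairwise disjoint, and, relabelling the $Q$'s, $Q_1\neq\varnothing$. Right-multiplying the first equality by $g_1^{-1}$ we may assume $g_1=e$, so $G=P_1\cup P_2g_2$. Since $P_1\cap P_2=\varnothing$ we obtain $P_2\subseteq G\setminus P_1\subseteq P_2g_2$, and multiplying this inclusion repeatedly on the right by $g_2$ gives an increasing chain $P_2\subseteq P_2g_2\subseteq P_2g_2^2\subseteq\cdots$. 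If $g_2$ had finite order $N$, then $P_2g_2^N=P_2$ would collapse the chain, forcing $P_2g_2=P_2$ and hence $G=P_1\cup P_2g_2=P_1\cup P_2$; but $Q_1$ is non-empty and disjoint from $P_1$ and $P_2$, a contradiction. So $g_2$ has infinite order, which is impossible in a torsion group. This proves $\calT(G)\ge 6$ and completes part~(i).

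For part~(ii), let $B=B(r,n)$ be a non-cyclic free Burnside group of odd exponent $n\ge 665$. Then $B$ is a torsion group and, by Adian's theorem, non-amenable, so $\calT(B)\ge 6$ by part~(i). For the upper bound I would proceed in three steps. First, the subgroup of $B$ generated by two of the free generators is isomorphic to $B(2,n)$, and the Tarski number does not increase under passage to a non-amenable subgroup $H$ (a paradoxical decomposition of $H$ spreads over a left transversal of $H$ in $B$, with the same number of pieces); so it is enough to bound $\calT(B(2,n))$. Second, one proves the general estimate that if a group admits a finite symmetric subset $S$ with $|AS|\ge 2|A|$ for every finite $A$, then $\calT\le 2|S|$: a Hall-type matching argument on the bipartite graph with both parts equal to the group and each $x$ joined to $xS$ produces, from the doubling hypothesis, a $2$-to-$1$ map $f\colon G\to G$ with $f(x)\in xS$ for all $x$; the edges $x\mapsto f(x)$ then form a disjoint union of paths of length two, and $2$-colouring them splits $f$ into a paradoxical decomposition using at most $|S|$ pieces in each of the two families. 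Finally, Adian's proof that $B(2,n)$ is non-amenable yields such a set $S$, which can be taken of size $7$; this gives $\calT(B(2,n))\le 14$, and hence $\calT(B)\le 14$.

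The one genuinely hard ingredient is precisely the one being quoted rather than proved: Adian's theorem that free Burnside groups of odd exponent $\ge 665$ are non-amenable, together with the quantitative form of it used in the last step, namely the existence of an explicit symmetric set $S$ of size $7$ with the doubling property $|AS|\ge 2|A|$. All the remaining ingredients are elementary --- the exclusion of $\calT=4$ and $5$ in part~(i) (the only bookkeeping there being that one may take $m=2$, normalise $g_1=e$, and pick $Q_1$ non-empty), the monotonicity of $\calT$ under non-amenable subgroups, and the passage from the doubling inequality to a $14$-piece decomposition via matchings.
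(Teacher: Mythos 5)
First, note that the paper does not prove this theorem at all: it is quoted verbatim from \cite{CSGH}, so there is no internal proof to compare with; the closest the paper comes is Lemma~\ref{obs:Ozawa}(b) and Corollary~\ref{c:2}(ii), which generalize part (i). Your argument for part (i) is correct and complete, and it is essentially that generalization specialized to a two-element translating set: after normalizing $g_1=e$, your increasing chain $P_2\subseteq P_2g_2\subseteq P_2g_2^2\subseteq\cdots$ is exactly the paper's observation that the subgroup generated by one of the two translating sets must be infinite, which together with the J\'onsson--Dekker exclusion of $\calT=4$ rules out torsion groups of Tarski number $4$ or $5$. Likewise, your general step ``a symmetric doubling set $S$ gives $\calT\le 2|S|$'' is sound and is the same mechanism as Lemma~\ref{paradox} combined with Theorem~\ref{HallRado}(i); the only imprecision is that Hall's theorem produces a partial map (each vertex has exactly two chosen incoming edges and at most one outgoing edge, i.e.\ edge-disjoint two-edge ``cherries''), not a totally defined $2$-to-$1$ map $f\colon G\to G$ nor a disjoint union of paths, but the colouring argument goes through verbatim.

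The genuine gap is the upper bound in part (ii). The constant $14$ rests entirely on your assertion that Adian's non-amenability proof ``yields'' a symmetric set $S$ of size $7$ in $B(2,n)$ with $|AS|\ge 2|A|$ for all finite $A$. That is not something Adian states, and it cannot be quoted off the shelf: what Adian proves is a cogrowth estimate, and converting it into a doubling set is precisely the quantitative content of Section~IV of \cite{CSGH}, which you neither reproduce nor reference in a checkable form. Moreover, the obvious conversion does not give a set of size $7$: since $B(2,n)$ is a quotient of the free group $F_2$, the spectral radius $\rho$ of the simple random walk on the four standard generators satisfies $\rho\ge \sqrt{3}/2$, so the Cauchy--Schwarz bound $|AS|\ge |A|/\rho^{2}$ yields at best the factor $4/3$ for the generating set itself, and to reach the factor $2$ one must pass to a ball of radius about $3$ (over $50$ elements), giving a Tarski bound far worse than $14$. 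So whatever sharper argument produces $14$ in \cite{CSGH} is exactly the step missing from your proposal; as written, the upper bound of part (ii) is asserted rather than proved, and the honest options are either to reconstruct that argument or to cite \cite{CSGH} for part (ii), which is what the paper does.
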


For quite some time it was unknown if the set of Tarski numbers is infinite. That question was asked by Ozawa \cite{OS} and answered in the positive by the third author. For every $m\ge 1$ let $\Amen_m$ (resp. $\Fin_m$) be the class of all groups where all $m$-generated subgroups are amenable (resp. finite). For example, $\Amen_1$ is the class of all groups and $\Fin_1$ is the class of all torsion groups. Clearly $\Fin_m\subseteq \Amen_m$ for every $m$. Ozawa noticed \cite{OS} that all groups in $\Amen_m$ have Tarski number at least $m+3$, and the third author observed that $\Fin_m$ (for every $m$) contains non-amenable groups. This immediately follows from two results about Golod-Shafarevich groups proved in \cite{EJ} and \cite{EJ2} (see \S~\ref{s:l} below). Thus there exist non-amenable groups with arbitrarily large Tarski numbers.

In fact, results of \cite{EJ,EJ2} imply the following much stronger statement (see \S~\ref{sec:unbounded} for details).

\begin{Theorem}
\label{t:ej} There exists a finitely generated non-amenable group $H$ such that for every $m\ge 1$, $H$ has a finite index subgroup $H_m$ lying in $\Fin_m$ and hence $H_m$ is a non-amenable group with Tarski number at least $m+3$. Moreover, for every prime $p$ we can assume that $H$ is a residually-$p$ group.
\end{Theorem}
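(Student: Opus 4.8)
The plan is to extract the group $H$ from the theory of Golod-Shafarevich (GS) groups. Recall the two inputs from \cite{EJ,EJ2}: first, a finitely generated GS group over $\mathbb F_p$ has an infinite family of open (finite index) subgroups which are themselves GS over $\mathbb F_p$ with arbitrarily large ``GS-discriminant''; second, by a theorem of Zelmanov (used crucially in \cite{EJ}), a GS group over $\mathbb F_p$ whose discriminant is large enough relative to $m$ has the property that all its $m$-generated subgroups are finite — i.e. it lies in $\Fin_m$. The classical construction of Golod produces, for every prime $p$, an infinite, finitely generated, residually-$p$, torsion group that is GS over $\mathbb F_p$; moreover such groups are non-amenable (since infinite residually finite torsion groups cannot be amenable once one knows they are not — here one uses that GS groups have exponential growth / are large enough to contain non-amenable quotients, or simply cites the non-amenability of GS groups from \cite{EJ}). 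So I would take $H$ to be such a Golod group over $\mathbb F_p$, which is finitely generated, non-amenable, and residually-$p$.

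Next I would iterate. Fix $m\ge 1$. By the subgroup theorem for GS groups (\cite{EJ,EJ2}), $H$ has a finite index subgroup $H_m$ which is GS over $\mathbb F_p$ with discriminant large enough that Zelmanov's theorem applies, giving $H_m\in\Fin_m$. Since $H_m$ has finite index in the non-amenable group $H$, $H_m$ is itself non-amenable. Finally, apply Ozawa's observation (quoted in the excerpt): every group in $\Amen_m\supseteq\Fin_m$ has Tarski number at least $m+3$; hence $\mathcal T(H_m)\ge m+3$. Residual-$p$-ness of $H$ is inherited from Golod's construction, and one may note $H_m$ is residually-$p$ as well, being a subgroup of $H$.

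The one point requiring care — and the main obstacle — is making precise the quantitative passage ``GS over $\mathbb F_p$ with large discriminant $\Rightarrow$ all $m$-generated subgroups finite.'' This is where the real content of \cite{EJ} lies: one needs that the restricted enveloping algebra (or the associated graded Lie algebra) of the pro-$p$ completion is itself Golod-Shafarevich, then invokes Zelmanov's solution of the restricted Burnside-type problem to conclude that a sufficiently ``deep'' finite index subgroup has nilpotent/finite $m$-generated subgroups for the prescribed $m$. Since the excerpt permits me to quote \cite{EJ,EJ2}, I would not reprove this; I would only verify that the single group $H$ can be chosen independently of $m$, i.e. that the \emph{same} $H$ admits, for every $m$, a finite index subgroup of sufficiently large GS-discriminant. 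This follows because in a GS group over $\mathbb F_p$ the discriminants of finite index subgroups are unbounded (they grow, e.g., along a chain of subgroups of increasing index), so a uniform choice of $H$ works. I would organize the write-up in \S\ref{sec:unbounded} as: (1) recall Golod's construction and residual-$p$-ness; (2) recall the GS subgroup theorem and the unboundedness of discriminants; (3) recall Zelmanov's theorem and the $\Fin_m$ conclusion from \cite{EJ}; (4) combine with Ozawa's $\mathcal T\ge m+3$ bound. The deduction of Theorem~\ref{t:ej} is then immediate, and Theorem~\ref{t:csgh}(i)--(ii) and the infinitude of the set of Tarski numbers drop out as corollaries.
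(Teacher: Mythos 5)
There is a genuine gap at the central step. You assert, citing \cite{EJ,EJ2} and ``a theorem of Zelmanov,'' that a Golod--Shafarevich group whose ``discriminant'' is large enough relative to $m$ automatically lies in $\Fin_m$, and that finite index subgroups of a fixed GS group have unbounded discriminant, so the same $H$ works for all $m$. No such theorem exists in \cite{EJ} or \cite{EJ2}, and the statement is false as a general principle: a nonabelian free group is Golod--Shafarevich with arbitrarily large deficiency but has infinite cyclic subgroups, and even for torsion GS groups no numerical GS-invariant forces $m$-generated subgroups to be finite. This is exactly why Golod's original construction, for each fixed $m$, has to impose, for every $m$-tuple of elements, extra relations making the subgroup they generate finite (this is the content of the result quoted in the paper as ``for every $m$ there exists an $(m+1)$-generated GS group in $\Fin_m$''), and why getting a \emph{single} group whose finite index subgroups land in $\Fin_m$ for all $m$ simultaneously is the nontrivial content of the theorem rather than a formal consequence of a subgroup theorem.

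The actual argument (Proposition~\ref{cor:GS1}, following the proof of \cite[Lemma~8.8]{EJ2}, reproduced in Appendix~C) goes the other way: $H$ is not a ready-made Golod group but a carefully chosen \emph{quotient} of a GS group $G$. One fixes a weight function $W$, observes that $G_m=\{g\in G: W(g)<1/m\}$ has finite index, and then adjoins to the presentation a set $R_\eps$ of additional relators of total weight $<\eps$ (torsion relators $g_i^{p^{n_i}}$ plus long left-normed commutators in each $m$-element subset of $G_m$, whose weights decay geometrically since $W(S)<1$), so that in $H=G/\langle\!\langle R_\eps\rangle\!\rangle$ the image $H_m$ of $G_m$ lies in $\Fin_m$ (Proposition~\ref{GS1}); smallness of $W(R_\eps)$ keeps $H$ Golod--Shafarevich, hence non-amenable by \cite{EJ}, and residual-$p$-ness is obtained by passing to the image of $H$ in its pro-$p$ completion, which is still non-amenable. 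Your proposal gets the outer skeleton right (Ozawa's bound $\mathcal T\ge m+3$, non-amenability via \cite{EJ}, passage to finite index subgroups), but the step producing $H_m\in\Fin_m$ rests on a nonexistent implication and would need to be replaced by this quotient construction (or an explicit appeal to the proof of \cite[Lemma~8.8]{EJ2}), so as written the proof does not go through.
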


\begin{Remark} Since subgroups of finite index are quasi-isometric to the whole group,  Theorem \ref{t:ej} implies that for some natural number $t>4$, the property of having Tarski number $t$ is not invariant under quasi-isometry. We do not know what the number $t$ is, i.e., what is the Tarski number of the group $G$ from Theorem ~\ref{t:ej}. The only estimates we have are based on a rough calculation of the isoperimetric constant of the group $G$ which gives an enormous upper bound for $t$ (about $10^{10^8}$). Note that a well-known question of Benson Farb asks whether the property of finitely generated groups of having a non-Abelian free subgroup is invariant under quasi-isometry. In view of the result of J\'onsson and Dekker this is equivalent to the question whether the property of having Tarski number $4$ is invariant under quasi-isometry.
\end{Remark}

We now turn to the discussion of our results.

\subsection{Tarski numbers of subgroups and  quotients}
If $H$ is a non-amenable group which is either a subgroup or a quotient of a group $G$, it is easy
to see that $\calT(G)\leq \calT(H)$ (for a proof see \cite[Theorems~5.8.13, 5.8.16]{Sa}).
Conversely, in many cases it is possible to find an explicit upper bound on $\calT(H)$ in terms of $\calT(G)$.
Our results of this type are collected in the following theorem:
\begin{Theoremintro}
\label{Tar_combined}
Let $G$ be a non-amenable group and $H$ a subgroup of $G$.
\begin{itemize}
\item[(a)] Suppose that $H$ has finite index in $G$. Then
$$\calT(H)-2\leq [G:H](\calT(G)-2).$$
\item[(b)]
Let $\mathcal{V}$ be a variety of groups where all groups are amenable and relatively free groups are right orderable. Then there exists a function $f\colon \mathbb{N}\to \mathbb{N}$ (depending only on $\mathcal{V}$)
with the following property: if $H$ is normal in $G$ and $G/H\in \mathcal{V}$, then
$\calT(H)\leq f(\calT(G))$.
\item[(c)] Assume that $H$ is normal and amenable. Then $\calT(G/H)=\calT(G).$
\item[(d)]
Assume that $G=H\times K$ for some $K$. Then $\min\{\calT(H),\calT(K)\}\leq  2(\calT(G)-1)^2$.
\end{itemize}
\end{Theoremintro}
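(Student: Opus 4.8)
The plan is to exploit the product structure together with the combinatorial flexibility built into the definition of a paradoxical decomposition. Start with an optimal paradoxical decomposition of $G = H \times K$, say $G = \bigcup_{i=1}^m P_i g_i = \bigcup_{j=1}^n Q_j h_j$ with $m+n = \calT(G)$, disjoint pieces $P_i, Q_j$ and translating elements $g_i = (a_i, b_i)$, $h_j = (c_j, d_j)$. The idea is that each of these data can be "specialized" in the $K$-coordinate: for a fixed $k \in K$, restrict attention to the slice $H \times \{k\}$. Intersecting each piece with the relevant preimage produces a paradoxical-type decomposition of $H$ provided we can control how the $K$-coordinates of the translating elements move the slices around. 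The obstacle is that translating by $g_i$ sends $H \times \{k\}$ to $H \times \{b_i k\}$, so a single slice is not preserved; we must instead group slices into finitely many $K$-cosets and re-index. This is where the quadratic blow-up comes from.

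Concretely, the first step is to let $S \subseteq K$ be the (finite) set of all $K$-coordinates $b_i$ of the $g_i$ together with all $b_i^{-1}$, and similarly $T \subseteq K$ for the $d_j$; then look at a suitable finite union of slices, i.e. $H \times F$ for a finite $F \subseteq K$ chosen so that $FS \subseteq F$-ish... more precisely we pick a single $k_0$ and consider the images of $H \times \{k_0\}$ under the relevant group elements. The cleanest route: for each pair $(i, k)$ with $k$ ranging over the finitely many cosets that can arise, define $P_{i,k}' = \{h \in H : (h, k) \in P_i\}$, and check that the translation by $g_i$ in the first coordinate, namely $h \mapsto h a_i$, combined with the bookkeeping of which slice we land in, assembles the $P_{i,k}'$ (over all $i$ and all admissible $k$) into a covering family $H = \bigcup P_{i,k}' a_i$. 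Do the same on the $Q$-side. The number of pieces on each side is at most $m$ (resp. $n$) times the number of admissible $K$-slices, and that slice count is bounded by something like $m$ (resp. $n$) itself, since only the $b_i$'s (resp. $d_j$'s) generate the relevant translations. Being slightly generous this gives, on the $P$-side, at most $m \cdot m$ pieces and on the $Q$-side at most $n \cdot n$ pieces, so a paradoxical decomposition of $H$ with at most $m^2 + n^2$ pieces.

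To turn $m^2 + n^2$ into the claimed bound, note $m^2 + n^2 \leq (m+n-1)^2 + 1$ is false in general, so instead symmetrize: by the left-right flexibility and the fact that we could equally well have projected onto $K$, we get $\min\{\calT(H), \calT(K)\} \leq m^2 + n^2$ where $m+n = \calT(G)$; and since $m, n \geq 2$, one checks $m^2 + n^2 \leq 2(m+n-1)^2 - $ (a positive quantity) whenever $m + n \geq 4$, in fact $m^2+n^2 \le (m+n)^2/1$ trivially and more carefully $m^2 + n^2 \le 2\max\{m,n\}^2 \le 2(m+n-1)^2$ since $\max\{m,n\} \le m+n-1$ (using $\min\{m,n\}\ge 1$). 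Wait — this already gives $m^2+n^2 \le 2(m+n-1)^2 = 2(\calT(G)-1)^2$ directly, with room to spare; the $\min$ over $H$ and $K$ is then only needed if the slice-counting bound is genuinely asymmetric, which it is not in the symmetric setup, so in fact the argument bounds $\calT(H)$ alone. The main obstacle, and the step deserving the most care, is the bookkeeping in the middle step: verifying that the restricted pieces $P_{i,k}'$ are genuinely disjoint in $H$ (they are, since the $P_i$ are disjoint in $G$ and distinct slices are disjoint) and, more delicately, that the translated restricted pieces still cover $H$ — this requires checking that for each $h \in H$ and each relevant target slice $k$, some $(h', k') \in P_i$ with $h' g_i^{(1)} = h$ and the slice arithmetic $k' b_i = k$ has a solution, which follows from the covering property of the original decomposition applied slice by slice. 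Once that combinatorial lemma is in place, the counting and the elementary inequality finish the proof.
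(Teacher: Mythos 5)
The step you yourself flag as ``deserving the most care'' is where the argument breaks, and it cannot be repaired in the form you propose. The pieces $P'_{i,k}=\{h\in H:(h,k)\in P_i\}$ extracted from \emph{different} slices $H\times\{k\}$ and $H\times\{k'\}$ are disjoint in $G$, but once you identify each slice with $H$ (which you must do to assemble a decomposition of $H$), their images can overlap arbitrarily: the projection $H\times K\to H$ collapses all slices onto one copy of $H$, and disjointness of the $P_i$ only controls intersections \emph{within} a single slice. So your claim that the restricted pieces ``are genuinely disjoint in $H$'' is false, and the family $\{P'_{i,k}\}$ is not a paradoxical decomposition. Moreover, the conclusion you draw from it --- that the construction bounds $\calT(H)$ alone, making the $\min$ unnecessary --- cannot be true for \emph{any} bound in terms of $\calT(G)$: take $H$ a non-amenable group in $\Fin_m$ (so $\calT(H)\ge 2m+4$, which can be made arbitrarily large) and $K$ a free group of rank $2$; then $G=H\times K$ contains a non-Abelian free subgroup, so $\calT(G)=4$, while $\calT(H)$ is unbounded. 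Any correct proof must be allowed to output a decomposition of whichever factor ``cooperates'', which is exactly why the statement only bounds $\min\{\calT(H),\calT(K)\}$ (compare Remark~\ref{r:1}(iii)).

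For comparison, the paper's proof avoids slices entirely and works with expansion conditions. An optimal decomposition with $1\in S_1\cap S_2$ gives, via Lemma~\ref{paradox}, a finite set $U=S_1\cup S_2$ with $|U|\le\calT(G)-1$ and $|FU^{-1}|\ge 2|F|$ for every finite $F\subseteq G$. Lemma~\ref{U_GxG} then uses the product structure: either the projection $U_1=\pi_1(U)$ satisfies $|F_1U_1|\ge\sqrt{2}\,|F_1|$ for all finite $F_1\subseteq H$, in which case $(U_1)^2$ is doubling; or some $F_1$ violates this, and plugging rectangles $F_1\times F_2$ into the inequality $|F_1U_1|\,|F_2U_2|\ge |FU|\ge 2|F_1|\,|F_2|$ forces the $\sqrt{2}$-expansion (hence doubling of the square) in the \emph{other} factor. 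Theorem~\ref{HallRado} converts the doubling of $(U_i)^2$ back into a paradoxical decomposition of that factor with at most $2|U_i|^2\le 2(\calT(G)-1)^2$ pieces. This is precisely the point where the choice of factor, and hence the $\min$, enters; no amount of bookkeeping in a slice-by-slice restriction can substitute for it.
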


\begin{Remarks} \label{r:1} \rm
(i) There is an obvious similarity
between Theorem~\ref{Tar_combined}(a) and the Schreier index formula.
The main difference with the latter is that we do not know whether the above inequality
can become an equality (for $H\neq G$), and if not, how large the ratio
$\frac{\calT(H)-2}{\calT(G)-2}$ can be compared to the index $[G:H]$. We do know that
dependence on the index cannot be eliminated in view of Theorem ~\ref{t:ej}.

(ii) Varieties $\mathcal V$ for which the hypotheses of Theorem~\ref{Tar_combined}(b) hold include the variety of all Abelian groups and more generally all solvable groups of a given class \cite{Ord}.
In particular, if $G/H$ is cyclic, Theorem~\ref{Tar_combined}(b) yields a (non-trivial) lower
bound on $\calT(G)$ in terms of $\calT(H)$ alone (independent of the size of $G/H$).
This special case will be used to prove Theorem~\ref{unbounded_2gen} below.

(iii) We do not know if $\calT(H\times K)$ can be strictly smaller than the minimum of  $\calT(H)$ and $\calT(K)$.
By Theorem \ref{Tar_combined}(c) the inequality becomes an equality if one of the groups $H$ or $K$ is amenable.
For the case $K=H$ see \cite[Problem~5.9.23]{Sa}.
\end{Remarks}
\skv

\subsection{Unbounded Tarski numbers}
It is clear that non-amenable groups from $\Amen_m$ must have at least $m+1$ generators. Thus the already mentioned results  about groups  with arbitrarily large Tarski numbers give rise to the following natural question:

\begin{Question}
Is there a relation between the minimal number of generators of a non-amenable group and its Tarski number?
\end{Question}

The next theorem shows that the answer is negative.

\begin{Theoremintro}\label{unbounded_2gen}
The set of Tarski numbers of $2$-generated non-amenable groups is infinite.
Moreover, the set of Tarski numbers of $2$-generated infinite groups with property~(T) is infinite.
\end{Theoremintro}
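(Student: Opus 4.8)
The plan is to combine Theorem~\ref{t:ej} with the cyclic-quotient case of Theorem~\ref{Tar_combined}(b). Recall from Theorem~\ref{t:ej} the existence of a finitely generated non-amenable residually-$p$ group $H$ possessing, for every $m$, a finite-index subgroup $H_m\in\Fin_m$; in particular $\calT(H_m)\ge m+3$, so the groups $H_m$ have arbitrarily large Tarski numbers. The difficulty is that the $H_m$ are not $2$-generated — indeed a non-amenable group in $\Amen_m$ needs at least $m+1$ generators. The idea is to \emph{reduce the number of generators without decreasing the Tarski number} by embedding each $H_m$ as a normal subgroup of a $2$-generated group with amenable (indeed cyclic, or abelian-by-cyclic) quotient, and then invoke Theorem~\ref{Tar_combined}(b) — applied with $\calV$ the variety of abelian groups, for which relatively free groups ($\Z^k$) are right-orderable — to bound $\calT(H_m)$ from above by $f(\calT(\text{ambient group}))$; this forces the ambient groups to have Tarski numbers tending to infinity.

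The standard tool for the embedding step is an \emph{embedding of a finitely generated group into a $2$-generated group}. I would use a wreath-product or HNN-type construction: a finitely generated group $H_m$ embeds into a $2$-generated group, and one can arrange the embedding so that $H_m$ is normal with quotient in a prescribed small amenable variety — e.g. $H_m \hookrightarrow G_m$ with $G_m/\langle\!\langle H_m\rangle\!\rangle$ cyclic or free abelian of bounded rank. Since $H_m$ is residually-$p$ (hence, after taking the subgroup, still residually-$p$ and in particular has "nice" structure), and since we want property~(T) in the second assertion, I would instead appeal to a known group-theoretic embedding theorem adapted to the (T) setting: any finitely presented group embeds into a finitely presented group with property~(T) (Belk–Zaremsky-type or the classical constructions using random groups / expanders), or more concretely use that $H$ can be taken with a presentation allowing a $2$-generated (T) overgroup in which $H_m$ sits as a finite-index-controlled normal subgroup with cyclic-type quotient. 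One must check that the quotient lands in a variety satisfying the hypotheses of Theorem~\ref{Tar_combined}(b).

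The key steps, in order, are: (1) invoke Theorem~\ref{t:ej} to get non-amenable $H_m$ with $\calT(H_m)\ge m+3$; (2) embed $H_m$ as a normal subgroup of a $2$-generated (resp. $2$-generated property~(T)) group $G_m$ with $G_m/\langle\!\langle H_m^{G_m}\rangle\!\rangle$ in a variety $\calV$ of amenable, right-orderable relatively free groups — this is where the bulk of the work lies; (3) apply Theorem~\ref{Tar_combined}(b) to conclude $\calT(H_m)\le f(\calT(G_m))$, whence $\calT(G_m)\ge f^{-1}(m+3)\to\infty$; (4) since all $G_m$ are $2$-generated non-amenable (the non-amenable normal subgroup $H_m$ witnesses non-amenability) and, in the refined statement, have property~(T), their Tarski numbers form an infinite set. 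I expect step~(2) to be the main obstacle: producing, uniformly in $m$, a $2$-generated overgroup with the right kind of (amenable, orderable-relatively-free) quotient and simultaneously with property~(T) requires a careful choice of embedding — likely via a specific construction (e.g. graph products, or Rips-style constructions over hyperbolic groups with (T), or the use of a fixed ambient $2$-generated (T) group into which all the relevant subgroups embed compatibly) rather than a black-box theorem, and one must verify the normality and quotient-variety conditions by hand.
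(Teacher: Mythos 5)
Your high-level skeleton is the paper's: start from non-amenable groups in $\Fin_m$ (hence with Tarski number at least $2m+4$, Corollary~\ref{c:2}), place them inside $2$-generated groups whose relevant quotient lies in a variety covered by Theorem~\ref{Tar_combined}(b), and transfer the lower bound to the $2$-generated group. But the step you yourself flag as the main obstacle, your step (2), is genuinely missing, and the tools you propose for it would not work. You ask to embed $H_m$ \emph{itself} as a normal subgroup of a $2$-generated group with quotient in $\calV$; no off-the-shelf embedding (HNN, plain wreath product, Rips-type) delivers normality together with a cyclic or metabelian quotient, and this is also not what the paper does. The paper uses the Neumann--Neumann construction (Theorem~\ref{t:hh}): a group $G\in\Fin_m$ embeds into a $2$-generated subgroup $H$ of $G\wr C_n$ (resp.\ of an extension of a finite direct power $G^n$ by a finite metabelian group). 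The embedded copy of $G$ is \emph{not} normal in $H$; instead one works with the normal subgroup $H\cap G^n$, whose quotient is cyclic (resp.\ metabelian), and which lies in $\Fin_m$ because $\Fin_m$ is closed under subgroups and finite direct powers --- this closure argument is exactly what makes Theorem~\ref{Tar_combined}(b) applicable with the lower bound $2m+4$. Your plan contains no substitute for it.

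The second gap is property (T). Your route is backwards and rests on an unavailable black box: there is no theorem that every finitely presented group embeds into a finitely presented property (T) group, and even if some such embedding existed it would give no normality, no quotient in an amenable variety with right-orderable relatively free groups, and no way to run the Tarski-number transfer. In the paper, (T) is injected at the source, not imposed by the ambient group: one starts with an \emph{infinite property (T)} group in $\Fin_m$ (a Kazhdan quotient of a Golod--Shafarevich group, Corollary~\ref{lem:finT}); since $G$ has (T), its abelianization is finite, so $[G,G]^n$ has finite index in $G\wr C_n$ and hence in $H$, and (T) passes to $H$ through finite-index subgroups and overgroups (Theorem~\ref{t:hh}(c)). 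Note also that Theorem~\ref{t:ej} alone, which you invoke, is not what supplies the (T) source groups. Without these two ingredients --- the specific wreath-product embedding with its $\Fin_m$ normal subgroup, and (T) carried from the source group --- your argument does not close.
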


To prove Theorem~\ref{unbounded_2gen} we use a construction from \cite{HH} to embed any finitely generated group $G$ from $\Fin_m$ into a $2$-generated subgroup $H$ of a group which is an extension of a group from $\Fin_m$ by a finite metabelian group. The construction
ensures that $H$ has property~$(T)$ whenever $G$ does, and the lower
bound on the Tarski number of $H$ then follows from Theorem~\ref{Tar_combined}(b).

Theorem~\ref{unbounded_2gen} has an interesting application to cogrowth and spectral radius (for the definition of cogrowth and spectral radius see \cite{CSGH}; note that both quantities are invariants of a marked group, that is, a group with a chosen finite generating set). Recall that the maximal possible cogrowth of an $m$-generated group is $2m-1$ and the maximal spectral radius is $1$. If $m>1$, an $m$-generated group is amenable if and only if its cogrowth is exactly $2m-1$ and if and only if the spectral radius is $1$. The formulas from \cite[Section IV]{CSGH} relating the Tarski number with the cogrowth and spectral radius of a group immediately imply the following:

\begin{Corollary}
For every $\eps> 0$ there exists a $2$-generated
non-amenable group $G$  such that every $2$-generated subgroup $\langle x,y\rangle$ of $G$ has cogrowth at least $3-\eps$ (with respect to the generating set $\{x,y\}$)
and spectral radius at least $1-\eps$.
\end{Corollary}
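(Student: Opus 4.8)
The plan is to deduce the Corollary from Theorem~\ref{unbounded_2gen} together with the quantitative relations between Tarski number, cogrowth, and spectral radius recorded in \cite[Section~IV]{CSGH}. The key point is that those relations are monotone: if a marked group $(\langle x,y\rangle,\{x,y\})$ has Tarski number at least $t$, then the spectral radius of the associated random walk is bounded below by an explicit increasing function $\rho(t)$ of $t$ with $\rho(t)\to 1$ as $t\to\infty$, and likewise the cogrowth is bounded below by an explicit increasing function of $t$ tending to $2m-1=3$ for $m=2$. (Concretely, the Grigorchuk cogrowth--spectral radius formula converts a lower bound on one into a lower bound on the other, and the Tarski number controls the isoperimetric/F\o lner-type constant which in turn controls $\rho$; this is exactly the content of the formulas cited from \cite{CSGH}.)

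So the first step is to fix $\eps>0$ and choose $t=t(\eps)$ large enough that $\rho(t)\ge 1-\eps$ and the corresponding cogrowth bound is at least $3-\eps$. The second step is to invoke Theorem~\ref{unbounded_2gen}: since the set of Tarski numbers of $2$-generated non-amenable groups is infinite, there is a $2$-generated non-amenable group $G$ with $\calT(G)\ge t$. The third step is to observe that the relevant lower bound passes to all $2$-generated subgroups: if $\langle x,y\rangle\le G$ is non-amenable, then by the subgroup monotonicity $\calT(\langle x,y\rangle)\ge\calT(G)\ge t$ (using $\calT(G)\le\calT(H)$ for a non-amenable subgroup $H$, as recalled before Theorem~\ref{Tar_combined}), hence its cogrowth is $\ge 3-\eps$ and its spectral radius is $\ge 1-\eps$; and if $\langle x,y\rangle$ happens to be amenable, then (being a $2$-generated group, with $2>1$) its cogrowth equals $3$ and its spectral radius equals $1$, so the bounds hold trivially in that case too. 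Combining these, every $2$-generated subgroup of $G$ satisfies the asserted estimates.

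The only genuine subtlety is bookkeeping about generating sets: cogrowth and spectral radius are invariants of a \emph{marked} group, so one must be careful that the subgroup $\langle x,y\rangle$ is being measured with respect to $\{x,y\}$, which is precisely how the statement is phrased, and that the cited formulas from \cite{CSGH} are applied with the correct convention (symmetrized generating set of size $2m=4$ for $m=2$). There is no real obstacle here beyond quoting \cite[Section~IV]{CSGH} accurately; the main mathematical input, namely the existence of $2$-generated non-amenable groups of arbitrarily large Tarski number, is supplied entirely by Theorem~\ref{unbounded_2gen}. Thus the Corollary is essentially a translation of Theorem~\ref{unbounded_2gen} through the dictionary of \cite{CSGH}, and the proof is short.
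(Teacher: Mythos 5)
Your proposal is correct and follows exactly the route the paper intends: the paper treats the Corollary as an immediate consequence of Theorem~\ref{unbounded_2gen} together with the formulas of \cite[Section~IV]{CSGH}, using the subgroup monotonicity $\calT(\langle x,y\rangle)\ge\calT(G)$ for non-amenable subgroups and the trivial case of amenable $2$-generated subgroups (cogrowth $3$, spectral radius $1$), which is precisely your argument. The only difference is that you spell out the bookkeeping the paper leaves implicit, which is fine.
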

Note that since there are $2$-generated amenable groups (say, the lamplighter group $\Z/2\Z\wr \Z$) which are inductive limits of
$2$-generated non-amenable groups \cite{GH}, there are $2$-generated non-amenable groups with cogrowth arbitrarily close to $3$. But all previously known examples of such groups contain non-Abelian free subgroups and so they have $2$-generated subgroups with cogrowth $0$.
\skv

Theorem \ref{Tar_combined}(a) and Theorem \ref{t:ej} for $p=2$
yield another interesting corollary which is somewhat similar to the Bertrand postulate for prime numbers.

\begin{Corollary}[See Theorem \ref{Tar:estimates}]
\label{cor:n2n}
For every sufficiently large natural number $n$, there exists a group with Tarski number between $n$ and $2n$.
\end{Corollary}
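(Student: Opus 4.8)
The plan is to deduce Corollary~\ref{cor:n2n} from the combination of Theorem~\ref{Tar_combined}(a) and Theorem~\ref{t:ej} with $p=2$. Fix the finitely generated non-amenable group $H$ provided by Theorem~\ref{t:ej} (for the prime $p=2$), together with its chain of finite index subgroups $H_m\in\Fin_m$. Set $t=\calT(H)$; this is some fixed (albeit astronomically large) natural number. The key observation is that $H$ is residually-$2$, so it has finite index subgroups of index $2^k$ for every $k\ge 0$; more precisely, since $H$ is infinite and residually-$2$ and finitely generated, for each $k$ there is a normal subgroup $N_k\trianglelefteq H$ with $[H:N_k]=2^k$ (one may take $N_k$ to be the $k$-th term of a suitable descending chain, or intersect enough subgroups from the residual-$2$ filtration and then pass to an intermediate subgroup using the fact that finite $2$-groups have subgroups of every intermediate index). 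Each such $N_k$ is non-amenable because it has finite index in the non-amenable group $H$.

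Now apply Theorem~\ref{Tar_combined}(a) to the pair $N_k\le H$: this gives
\begin{equation*}
\calT(N_k)-2\le [H:N_k]\,(\calT(H)-2)=2^k\,(t-2).
\end{equation*}
On the other hand, since $N_k$ is a non-amenable subgroup of $H$, we have the trivial lower bound $\calT(N_k)\ge\calT(H)=t\ge 5$ (more simply $\calT(N_k)\ge 4$), and in fact one wants a lower bound that grows, which is why the $\Fin_m$-filtration matters: by choosing $N_k$ appropriately one can also arrange $N_k\subseteq H_{m(k)}$ for some $m(k)\to\infty$, forcing $\calT(N_k)\ge m(k)+3\to\infty$. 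Thus the sequence $\bigl(\calT(N_k)\bigr)_k$ is unbounded, while consecutive ratios $\calT(N_{k+1})/\calT(N_k)$ are controlled: passing from $N_k$ to $N_{k+1}$ at most doubles the index, and by the displayed inequality applied to $N_{k+1}\le N_k$ we get $\calT(N_{k+1})-2\le [N_k:N_{k+1}]\,(\calT(N_k)-2)\le 2(\calT(N_k)-2)$, hence $\calT(N_{k+1})\le 2\calT(N_k)-2<2\calT(N_k)$. Therefore the set $\{\calT(N_k):k\ge 0\}$ is an unbounded set of integers each $\ge 5$ in which every term is strictly less than twice the previous one, so it cannot avoid any interval $[n,2n]$ for $n$ large: choosing the first $k$ with $\calT(N_k)\ge n$, we have $n\le\calT(N_k)<2\calT(N_{k-1})\le 2n$.

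The main obstacle is the bookkeeping in the previous paragraph — guaranteeing simultaneously that (i) the indices grow by factors of exactly $2$ (so that the ``multiply by at most $2$'' step is legitimate) and (ii) the groups descend into $\Fin_{m}$ for $m\to\infty$ (so that the Tarski numbers actually tend to infinity). Both are available from Theorem~\ref{t:ej} and the residual-$2$ property, but one must be slightly careful to interleave a chain realizing all intermediate $2$-power indices with the chain $(H_m)$; concretely, one can first build a descending chain $H=L_0\supset L_1\supset L_2\supset\cdots$ with $[L_{k}:L_{k+1}]=2$ and $\bigcap_k L_k=1$, then for each $m$ pick $k_m$ with $L_{k_m}\subseteq H_m$ (possible since $H_m$ has finite index and the $L_k$ shrink to $1$), and finally run the doubling argument along the $L_k$. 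Once this chain is in place, everything else is the elementary interval-covering observation above, and the explicit value of $t=\calT(H)$ (hence the threshold ``sufficiently large $n$'') is whatever the isoperimetric estimate for $H$ yields, which is why the statement is only asymptotic.
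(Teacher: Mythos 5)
Your overall strategy is exactly the paper's: the proof of Theorem~\ref{Tar:estimates} takes the group $H$ of Theorem~\ref{t:ej} with $p=2$, produces a descending chain of normal subgroups with successive indices $2$ whose Tarski numbers tend to infinity, and then applies Theorem~\ref{Tar_combined}(a) to conclude that the Tarski number at most doubles at each step; your ``first $k$ with $\calT(N_k)\ge n$'' interval-covering argument, the doubling inequality $\calT(N_{k+1})-2\le 2(\calT(N_k)-2)$, and the use of Corollary~\ref{c:2} to force unboundedness are all correct and are precisely what the paper does (mostly implicitly).

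The one step that does not hold as you justify it is the claim that, because $\bigcap_k L_k=1$ and $H_m$ has finite index, some $L_{k_m}$ is contained in $H_m$. A descending chain of (even normal, $2$-power-index) subgroups with trivial intersection need not eventually enter a given finite-index subgroup: in $\dbZ^2$, take $L_k=\{(a,b): a\equiv c_k b \ (\mathrm{mod}\ 2^k)\}$, where $c_k$ are the truncations of a fixed $2$-adic unit $\alpha\notin\dbQ$; then $[L_k:L_{k+1}]=2$, $\bigcap_k L_k=\{0\}$, yet no $L_k$ lies in the index-$2$ subgroup $2\dbZ\times\dbZ$, since $(c_k,1)\in L_k$ with $c_k$ odd. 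So ``shrinking to $1$'' is not the right condition; what you need is that the chain is a base of the pro-$2$ topology \emph{and} that each $H_m$ is open in that topology, and the latter does not follow from the bare statement of Theorem~\ref{t:ej} (a residually-$2$ group can have finite-index subgroups containing no subgroup of $2$-power index). The repair comes from the construction behind Theorem~\ref{t:ej}: in the proof of Proposition~\ref{cor:GS1} (Appendix~C), $H_m$ contains the image $\omega_{j}H$ of a term of the Zassenhaus $2$-filtration, which is normal of $2$-power index; since $H/\omega_j H$ is a finite $2$-group, a chief series has all factors of order $2$, so one can refine the chain $H\supseteq\omega_1 H\supseteq\omega_2 H\supseteq\cdots$ into a chain of normal subgroups with successive indices $2$ passing through each $\omega_j H$, hence eventually contained in each $H_m\in\Fin_m$. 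With the chain built this way, the rest of your argument goes through verbatim and reproduces the paper's proof.
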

\skv

\subsection{Explicit values}
The final problem we address in this paper is precise calculation of Tarski numbers.
Prior to this paper there were no examples of non-amenable
groups without free subgroups whose Tarski number had been determined.
In fact, no integer $>4$ was known to be the Tarski number of a group. We will show, using the cost of groups and random  forests on their Cayley graphs,
that there exist groups (in fact, a large class of groups) with Tarski number $5$ and (a large class of) groups with Tarski number~$6$. In what follows, $\mathcal C(G)$ will denote the cost of a group $G$. The definition of the cost of a group appears in \S~\ref{sec:6}.

\begin{Theoremintro}
\label{thm:Tarski56}
Let $G$ be a group generated by $3$ elements such that $\mathcal C(G)\geq 5/2$. The following hold:
\begin{itemize}
\item[(i)]  $\calT(G)\leq 6$. In particular, if $G$ is torsion, then $\calT(G)=6$.
\item[(ii)] Assume in addition that one of the $3$ generators of $G$ has infinite order. Then $\calT(G)\leq 5$.
In particular, if $G$ does not contain a non-Abelian free subgroup, then $\calT(G)=5$.
\end{itemize}
\end{Theoremintro}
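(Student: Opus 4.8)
The plan is to reduce the two Tarski-number bounds to the existence of a well-chosen \emph{measurable spanning forest} on the Cayley graph of $G$, the hypothesis $\mathcal C(G)\ge 5/2$ being precisely what guarantees that such a forest exists.

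The ``in particular'' assertions follow once the upper bounds in (i) and (ii) are proved. Indeed $\mathcal C(G)\ge 5/2>1$ forces $G$ to be non-amenable, since amenable groups have cost $1$ (Gaboriau--Levitt), so $\calT(G)$ is defined; if moreover $G$ is torsion then $\calT(G)\ge 6$ by Theorem~\ref{t:csgh}(i), and if $G$ contains no non-abelian free subgroup then $\calT(G)\ge 5$ by the theorem of J\'onsson and Dekker. Thus it suffices to show $\calT(G)\le 6$ always, and $\calT(G)\le 5$ when one of the three generators has infinite order.

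Next comes the combinatorial mechanism. Fix a generating set $S=\{s_1,s_2,s_3\}$ and write $S^{\pm}=S\cup S^{-1}$ (a set of at most $6$ elements; in the degenerate case $|S^{\pm}|<6$ everything below only improves). Suppose $\Cay(G,S)$ carries a spanning forest $F$ in which every vertex has degree at least $3$; then every component of $F$ is an infinite tree. Choose an end in each of the countably many components and orient all edges toward it, so every vertex $v$ has a unique parent $p(v)$ and at least two children. The ``parent map'' $f:=p\colon G\to G$ is then onto and at least $2$-to-$1$. For each $w\in G$ pick two distinct children $a_w\ne b_w$ and set $A=\{a_w:w\in G\}$, $B=\{b_w:w\in G\}$; then $A\cap B=\emptyset$ and $f|_A,f|_B$ are bijections onto $G$ (distinct vertices have distinct parents). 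If $a_w=wt$ for the edge label $t\in S^{\pm}$, then $f(a_w)=w=a_w t^{-1}$, so $A$ decomposes as the disjoint union of the sets $A\cap G_{t^{-1}}$, where $G_u=\{x:f(x)=xu\}$, over the labels $t$ occurring on $A$-edges, and $\bigcup_t (A\cap G_{t^{-1}})\,t^{-1}=f(A)=G$; likewise for $B$. Consequently, if $F$ and the orientation can be chosen so that every vertex admits an $A$-child reached by an edge labelled in a set $T_A$ and a distinct $B$-child reached by an edge labelled in a set $T_B$, then we obtain a paradoxical decomposition of $G$ (in the sense of Definition~\ref{d1}, up to the usual interchange of left and right translations) with at most $|T_A|+|T_B|$ pieces. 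Since all edge labels lie in $S^{\pm}$, arranging for instance that every vertex has a child along a ``positive'' edge, labelled in $S$, and a child along an ``inverse'' edge, labelled in $S^{-1}$, yields $|T_A|=|T_B|=3$, hence $\calT(G)\le 6$. If $s_1$ has infinite order, the cosets of $\langle s_1\rangle$ partition $G$ into bi-infinite $s_1$-lines; if $F$ is built so as to contain every $s_1$-labelled edge of $\Cay(G,S)$, then each vertex has a child along an $s_1^{\pm1}$-edge, which we designate as its $A$-child, so that $|T_A|=2$, and a suitable further choice of the $s_2,s_3$-edges of $F$ keeps $|T_B|\le 3$, giving $\calT(G)\le 5$.

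It remains to produce such a forest, and this is where $\mathcal C(G)\ge 5/2$ enters and where the substance lies. Consider the Bernoulli action $G\acts(X,\mu)$, which is free, ergodic and probability-measure-preserving, with Cayley graphing $\Phi_S$ of cost $3$. Its orbit equivalence relation $\mathcal R$ satisfies $\mathcal C(\mathcal R)\ge\mathcal C(G)\ge 5/2$, so every generating subgraphing of $\Phi_S$ has cost at least $5/2$: one may delete from $\Phi_S$ a set of edges of measure at most $1/2$ and still generate $\mathcal R$. This ``density'' of $\Phi_S$ is exactly what permits extracting, by cost-theoretic arguments in the spirit of Gaboriau, a \emph{measurable} spanning subforest $F$ of $\Phi_S$ of minimum degree $\ge 3$ with the local features used above — including, in the infinite-order case, the requirement that $F$ contain every $s_1$-edge. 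Since $\mathcal R$ is a countable p.m.p.\ relation, the ($\mathcal R$-invariant, null) exceptional set on which these properties fail has null saturation, so restricting $F$ to a generic orbit $[x_0]$, $G$-equivariantly identified with $\Cay(G,S)$, gives a genuine combinatorial forest with all the required properties. Note that the genuinely ``paradoxical'' steps — choosing an end in each component and forming the $2$-to-$1$ parent map — are carried out on this single orbit, with no measurability needed; this is unavoidable, since a p.m.p.\ equivalence relation admits no measurable paradoxical decomposition, so one cannot run the whole argument inside $(X,\mu)$. I expect the hard part to be precisely this extraction of a measurable forest with the exact branching and labelling properties demanded above; in particular, pushing the piece count from $6$ down to $5$ — coordinating $F$ with the $\langle s_1\rangle$-foliation so that the second label set can be taken of size $3$ — is where both the infinite-order hypothesis and the full strength of $\mathcal C(G)\ge 5/2$ (rather than merely $\mathcal C(G)>2$) get used.
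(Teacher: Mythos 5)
Your reduction of the ``in particular'' claims is fine, but the core of your argument rests on a step that you do not prove and that does not follow from the tools you invoke: the extraction, from $\mathcal C(G)\ge 5/2$, of a single spanning forest of $\Cayu(G,S\cup S^{-1})$ with \emph{minimum} degree $\ge 3$ at every vertex (and, on top of that, with a positively-labelled child and a negatively-labelled child at every vertex, and in case (ii) containing every $s_1$-edge). What cost theory actually supplies (Lyons; Thom) is a $G$-invariant random spanning forest whose \emph{expected} degree at each vertex is at least $2\mathcal C(G)\ge 5$. Expected degree is an average over the randomness: almost every realization of such a forest can, and in general will, have vertices of forest-degree $0$ or $1$, so restricting the measurable forest to a single (generic) orbit gives no pointwise lower bound on degrees whatsoever. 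Your "cost-theoretic arguments in the spirit of Gaboriau" are exactly the missing content — you acknowledge this is "the hard part", but without it the end-choosing/parent-map mechanism never gets off the ground, and the additional labelling constraints you need to keep the translating sets down to size $3+3$ (resp. $3+2$) are further unverified demands on the same nonexistent object.

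The paper avoids this problem entirely by never asking for pointwise properties of one forest. For each \emph{fixed finite} set $A\subseteq G$ it integrates the degree functional $\sum_{g\in A}\deg_F(g)$ against the random forest $\mu$ to find some realization $\mathcal F$ (depending on $A$!) with $\sum_{g\in A}\deg_{\mathcal F}(g)\ge \delta|A|$, discards the at most $|S||A|$ incident edges labelled in $S$, and uses the fact that the remaining edges form a finite forest on the vertex set $AT^{-1}$ (so edges $<$ vertices) to get the isoperimetric inequality $|AT^{-1}|\ge(\delta-|S|)|A|\ge 2|A|$ with $T=S\cup\{1\}$; the Hall-type Theorem~\ref{HallRado} then converts these inequalities into a spanning evenly colored $2$-subgraph of $\Cay(G,(T_1,T_2))$, i.e., a paradoxical decomposition with translating sets $T\setminus\{s_1\}$ and $T\setminus\{s_2\}$ of sizes $3$ and $3$, and in case (ii) Thom's forest through the $a^{\pm1}$-edges gives the refined inequality of Proposition~\ref{prop:bound_randomforest}(e) and translating sets of sizes $3$ and $2$. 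So either prove your minimum-degree extraction (which I do not believe follows from $\mathcal C(G)\ge 5/2$, and which is in any case far stronger than needed), or replace that step by the expectation-plus-marriage-theorem argument; as it stands the proposal has a genuine gap.
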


Note that in \cite{Os}, Osin showed that for any integer $d\geq 2$ and any $\eps>0$ there exists
a $d$-generated torsion group $G$ with $\beta_1(G)\geq d-1-\eps$, where $\beta_1(G)$ denotes the
first $L^2$-Betti number of $G$.
By \cite[Corollary 3.23]{Gab1}, for every countable infinite group $G$ we have $ \mathcal C(G)\ge \beta_1(G)+1$. Thus, 
 torsion groups
satisfying the hypotheses of Theorem~\ref{thm:Tarski56}(i) (and therefore having Tarski number $6$) do exist. Moreover, one can construct such groups with very explicit presentations
(see Appendix~B). The last assertion of Theorem~\ref{thm:Tarski56}(i)
follows from Theorem~\ref{t:csgh}(i).

To our knowledge, groups satisfying the hypotheses of Theorem \ref{thm:Tarski56}(ii) do not appear in the literature. However, a slight modification of Osin's construction \cite{Os} yields such a group. 
For further details and very explicit presentations of groups with Tarski number $5$ see Appendix~B.

\begin{Remark}
One can define the notion of Tarski numbers for group actions (in complete analogy with the group case).
The question of which integers arise as Tarski numbers of group actions has been completely settled in a recent
paper of the second author \cite{Gi}, where it is proved that every integer $\ge 4$ is the Tarski number of a faithful transitive action of a free group.
\
\end{Remark}

\vskip .12cm
{\bf Organization.} In \S~\ref{sec:prelim} we introduce basic graph-theoretic terminology
and give a  graph-theoretic interpretation of paradoxical decompositions. In
\S~\ref{sec:amenablequotient} we prove Theorem~\ref{Tar_combined}.
In \S~\ref{sec:unbounded} we prove Theorem~\ref{unbounded_2gen}~and
discuss some related results.
In \S~\ref{sec:6} we prove Theorem~\ref{thm:Tarski56}. In Appendix~A we prove the equivalence of two definitions of Tarski numbers. In Appendix~B
we describe explicit constructions of groups with Tarski number $5$ and groups with Tarski number $6$. Finally, Appendix~C
contains a brief introduction to Golod-Shafarevich groups.

\vskip .12cm
{\bf Acknowledgments.} The authors would like to thank  Rostislav Grigorchuk,
Damien Gaboriau, Andrei Jaikin-Zapirain,  Wolfgang L\"uck, Russell Lyons, Nikolay Nikolov, Denis Osin, Dan Salajan and Andreas Thom for useful discussions and Narutaka Ozawa for posting question \cite{OS}.

\section{Preliminaries}
\label{sec:prelim}

\subsection{$k$-paradoxical decompositions}

It will be convenient to slightly generalize the notion of paradoxical decomposition
defined in the introduction (this generalization is also used, in particular, in \cite{RY}).

\begin{Definition}\rm Let $G$ be a group and $k\geq 2$ an integer. Suppose that
there exist finite subsets $S_1=\{g_{1,1},\ldots,g_{1,n_1}\},\ldots, S_k=\{g_{k,1},\ldots,g_{k,n_k}\}$ of $G$ and disjoint subsets $\{P_{ij}: 1\leq i\leq k,
1\leq j\leq n_i\}$ of $G$ such that for each $1\leq i\leq k$
we have $G=\bigcup_{j=1}^{n_i}P_{i,j}g_{i,j}$. Then we will say that $G$ admits
a {\it $k$-paradoxical decomposition with translating sets $S_1,\ldots, S_k$}.
The set $\cup_{i=1}^k S_i$ will be called the {\it total translating set}
of the decomposition.
\end{Definition}

Note that $2$-paradoxical decompositions are paradoxical decompositions in the usual sense.
Every $k$-paradoxical decomposition
with translating sets $S_1,...,S_k$ ``contains" a $2$-paradoxical decomposition with translating sets $S_1,S_2$. Conversely, given a $2$-paradoxical decomposition of a group $G$, there is
a simple way to construct a $k$-paradoxical decomposition of $G$ for arbitrarily large $k$
(see Lemma \ref{double_up}).
We will mostly use $2$-paradoxical decompositions, but $4$-paradoxical decompositions will
naturally arise in the proof of Theorem~\ref{Tar_combined}(b).
\vskip .12cm
The following result is obvious:

\begin{Remark}\label{1}
\label{pd:translation} If $G$ has a $k$-paradoxical decomposition with translating
sets $S_1,\ldots, S_k$, then $G$ also has a $k$-paradoxical decomposition with translating
sets $ S_1g_1,\ldots,  S_kg_k$ for any given $g_1,\ldots, g_k\in G$. In particular, we can always assume that $1\in S_i$ for each $i$.
\end{Remark}

Next we introduce some graph-theoretic terminology which is convenient
for dealing with paradoxical decompositions. We will mostly work with oriented graphs,
which will be allowed to have loops and multiple edges. In some cases edges of our
graphs will be colored and/or labeled.  The sets of vertices and edges of a graph $\Gamma$
will be denoted by $V(\Gamma)$ and $E(\Gamma)$, respectively.
If edges of $\Gamma$ are colored using colors $\{1,\ldots, k\}$, we denote by
$E_i(\Gamma)$ the set of edges of color $i$.

\begin{Definition}\rm $\empty$
\begin{itemize}
\item[(i)] Let $k$ be a positive integer. An oriented graph $\Gamma$ will
be called a {\it $k$-graph} if at each vertex of $\Gamma$ there are (precisely)
$k$ incoming edges and at most one outgoing edge.
\item[(ii)]  A $k$-graph $\Gamma$ will be called {\it evenly colored} if the edges
of $\Gamma$ are colored using $k$ colors and at each vertex of $\Gamma$ the $k$
incoming edges all have different colors.
\end{itemize}
\end{Definition}

Let $G$ be a group and $S$ a subset of $G$. The
Cayley graph $\Cay(G,S)$ is the {\bf oriented} graph with vertex set $G$ and a directed edge from $g$ to $gs$
for every $g\in G$ and $s\in S$. The edge $(g,gs)$ will be labeled by the element $s$.
We will also need the colored version of Cayley graphs.

\begin{Definition}\rm Let $S_1,\ldots, S_k$ be subsets of a group $G$. Let $E_i$
be the edge set of $\Cay(G,S_i)$, and define $\Cay(G,(S_1,\ldots, S_k))$ to be
the colored graph with vertex set $G$ and edge set $\sqcup_{i=1}^k E_i$ where
edges in $E_i$ are colored with color $i$. Note that if the sets $S_1,\ldots, S_k$ are not disjoint, the graph $\Cay(G,(S_1,\ldots, S_k))$ will have multiple edges, but there
will be at most one edge of a given color between any two vertices.
\end{Definition}

One can reformulate the notion of $k$-paradoxical decomposition
in terms of $k$-subgraphs of colored Cayley graphs as follows. Recall that a subgraph $\Gamma'$ of a graph $\Gamma$ is called \emph{spanning} if it contains all vertices of $\Gamma$.

\begin{Lemma}
\label{paradox} Let $S_1,\ldots, S_k$ be finite subsets of a group $G$.
The following are equivalent:
\begin{itemize}
\item[(i)] $G$ admits a $k$-paradoxical decomposition with translating sets
$S_1,\ldots, S_k$.
\item[(ii)] The colored Cayley graph $\Cay(G,(S_1,\ldots, S_k))$ contains
a spanning evenly colored $k$-subgraph.
\end{itemize}
\end{Lemma}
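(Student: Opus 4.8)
The plan is to translate the combinatorial conditions on both sides into statements about the colored Cayley graph $\Cay(G,(S_1,\ldots,S_k))$, vertex by vertex, and exhibit a dictionary between the sets $P_{i,j}$ of a $k$-paradoxical decomposition and a choice of an outgoing edge (or no outgoing edge) at each vertex. The key observation is that a $k$-subgraph singles out, for each vertex $g$ and each color $i$, exactly one incoming edge of color $i$, and that an incoming edge of color $i$ at $g$ is an edge of the form $(x, xg_{i,j})$ with $xg_{i,j}=g$, i.e. $x = gg_{i,j}^{-1}$; meanwhile each vertex has at most one outgoing edge overall.

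First I would prove (i)$\Rightarrow$(ii). Given a $k$-paradoxical decomposition with translating sets $S_i = \{g_{i,1},\ldots,g_{i,n_i}\}$ and disjoint sets $P_{i,j}$ satisfying $G = \bigcup_{j=1}^{n_i} P_{i,j}g_{i,j}$ for each $i$, I would build a spanning subgraph $\Gamma'$ of $\Cay(G,(S_1,\ldots,S_k))$ as follows: an edge of color $i$ from $x$ to $xg_{i,j}$ is included in $\Gamma'$ precisely when $x \in P_{i,j}$. I then check the three requirements. Since $G = \bigcup_j P_{i,j}g_{i,j}$, every vertex $h$ can be written as $xg_{i,j}$ with $x\in P_{i,j}$, giving at least one incoming edge of color $i$ at $h$; since the $P_{i,j}$ (over all $i,j$) are pairwise disjoint, for a fixed color $i$ the element $h$ lies in the set $P_{i,j}g_{i,j}$ for at most one $j$ — here I should note that $P_{i,j}g_{i,j} \cap P_{i,j'}g_{i,j'}=\emptyset$ for $j\neq j'$ because these sets are among the disjoint family and $g_{i,j}\ne g_{i,j'}$ means the translated sets are still disjoint (more carefully: $h = xg_{i,j}=x'g_{i,j'}$ with $x\in P_{i,j}, x'\in P_{i,j'}$ forces $x\ne x'$ since $g_{i,j}\ne g_{i,j'}$, and both lie in the disjoint family, contradiction unless $j=j'$) — so exactly one incoming edge of each color at each vertex, hence $k$ incoming edges with distinct colors. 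For the outgoing condition: an outgoing edge at $x$ exists iff $x \in P_{i,j}$ for some $i,j$; since all the $P_{i,j}$ are disjoint across all $i$ and $j$, this happens for at most one pair $(i,j)$, so at most one outgoing edge. Thus $\Gamma'$ is a spanning evenly colored $k$-subgraph.

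Conversely, for (ii)$\Rightarrow$(i), given such a subgraph $\Gamma'$ I define $P_{i,j} = \{x\in G : \Gamma' \text{ contains the color-}i\text{ edge from }x\text{ to }xg_{i,j}\}$. The "at most one outgoing edge" condition says the $P_{i,j}$ are pairwise disjoint over all $i,j$ (a vertex in two of them would have two outgoing edges). The "exactly one incoming edge of each color" condition says: for each $i$ and each $h\in G$ there is a unique $j$ and a unique $x$ with $xg_{i,j}=h$ and the color-$i$ edge $x\to h$ in $\Gamma'$, i.e. $h\in P_{i,j}g_{i,j}$ for exactly one $j$; in particular $G = \bigcup_{j=1}^{n_i} P_{i,j}g_{i,j}$. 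This is exactly a $k$-paradoxical decomposition with translating sets $S_1,\ldots,S_k$.

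I do not expect any serious obstacle here; the statement is essentially a repackaging of definitions. The one point that needs a little care is the bookkeeping of which disjointness hypothesis is being used where — the disjointness of the $P_{i,j}$ across \emph{all} $i$ corresponds to the bound on \emph{outgoing} edges, whereas the covering condition for each fixed $i$ together with the resulting disjointness of the translates $P_{i,j}g_{i,j}$ for that $i$ corresponds to there being \emph{exactly} $k$ incoming edges of distinct colors. It is also worth remarking (using Remark \ref{1}) that there is no loss in assuming $1 \in S_i$, though this is not needed for the lemma itself. Writing out the two directions carefully, tracking these correspondences, completes the proof.
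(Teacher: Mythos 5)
Your (ii)$\Rightarrow$(i) direction is fine and is essentially the paper's argument. The problem is in (i)$\Rightarrow$(ii), at the point where you claim that for a fixed color $i$ the translates $P_{i,j}g_{i,j}$, $j=1,\ldots,n_i$, are automatically pairwise disjoint. The parenthetical argument is a non sequitur: from $h=xg_{i,j}=x'g_{i,j'}$ with $j\neq j'$ you correctly conclude $x\neq x'$, but then $x\in P_{i,j}$ and $x'\in P_{i,j'}$ are two \emph{different} elements lying in two disjoint sets, which is no contradiction at all. Disjointness of the pieces is not inherited by their translates: in $\Z$ take $P_{i,1}=\{0\}$, $P_{i,2}=\{1\}$, $g_{i,1}=1$, $g_{i,2}=0$; then $P_{i,1}g_{i,1}=P_{i,2}g_{i,2}=\{1\}$. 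Moreover, the definition of a $k$-paradoxical decomposition only requires each union $\bigcup_j P_{i,j}g_{i,j}$ to \emph{cover} $G$, not to be disjoint (one can always enlarge pieces of a genuine decomposition, keeping them disjoint, so that the translates overlap). Consequently the subgraph $\Gamma'$ you construct may have two or more incoming edges of the same color at some vertex, and then it is not a $k$-subgraph, let alone an evenly colored one.

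The repair is exactly the preliminary step the paper inserts before running your construction: first shrink the pieces, replacing each $P_{i,j}$ by a subset $P'_{i,j}$ so that $G=\bigsqcup_{j} P'_{i,j}g_{i,j}$ is a disjoint union for each $i$ (for every $h\in G$ and every $i$ keep $hg_{i,j}^{-1}$ in $P'_{i,j}$ for just one $j$ with $h\in P_{i,j}g_{i,j}$); the new pieces are subsets of the old ones, hence still pairwise disjoint over all $i,j$, and the covering conditions persist. Equivalently, build your $\Gamma'$ and then delete surplus incoming edges of each color at each vertex, which cannot violate the outgoing-edge bound. With this one extra step your argument coincides with the paper's proof; the rest of your dictionary between pieces and edges is exactly theirs.
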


\begin{proof}
Assume that (ii) holds, and let $\Gamma$ be a spanning evenly colored $k$-subgraph
of $\Cay(G,(S_1,\ldots, S_k))$. For $1\leq i\leq k$ choose an ordering
$g_{i,1},\ldots, g_{i,n_i}$ of the elements of $S_i$.

For $1\leq i\leq k$ and $1\leq j\leq n_i$ let $P_{i,j}$ be the set of tail vertices of
all edges of $\Gamma$ which have color $i$ and label $g_{i,j}$, that is,
$$P_{i,j}=\{g\in G: (g,gg_{i,j})\in E_i(\Gamma)\}.$$
Since
every vertex of $\Gamma$ has at most one outgoing edge, the sets $P_{i,j}$ are disjoint.

On the other hand note that $P_{i,j}g_{i,j}$ is the set of head vertices of
all edges of $\Gamma$ of color $i$  labeled by $g_{i,j}$. Since every vertex of $\Gamma$ has exactly one incoming edge of any given color, for any $1\leq i\leq k$ we have $G=\sqcup_{j=1}^{n_i} P_{i,j}g_{i,j}$.
Therefore, $G$ has a $k$-paradoxical decomposition with translating sets
$S_1,\ldots, S_k$ and pieces $P_{i,j}$, so (i) holds.

Conversely, suppose that (i) holds. In the notations of the definition of a
$k$-paradoxical decomposition we can assume that the unions $\cup_{j=1}^{n_i} P_{i,j}g_{i,j}$
are disjoint (by making the sets $P_{i,j}$ smaller if needed). The rest of the proof
is completely analogous to the implication ``(ii)$\Rightarrow$ (i)''.
\end{proof}

\vskip .2cm

Given an oriented graph $\Gamma$ and a finite subset $A$ of $V(\Gamma)$, we put
\begin{align*}
&V^+_{\Gamma}(A)=\{v\in V(\Gamma): (a,v)\in E(\Gamma)\mbox{ for some }a\in A\};&\\
&V^-_{\Gamma}(A)=\{v\in V(\Gamma): (v,a)\in E(\Gamma)\mbox{ for some }a\in A\}.&
\end{align*}
In other words, $V^+_{\Gamma}(A)$ is the set of head vertices of all edges whose
tail vertex lies in $A$, and $V^-_{\Gamma}(A)$ is the set of tail vertices of all edges whose head vertex lies in $A$.

If in addition $E(\Gamma)$ is colored using colors $\{1,\ldots, k\}$,
we put
\begin{align*}
&V^{-,i}_{\Gamma}(A)=\{v\in V(\Gamma): (v,a)\in E_i(\Gamma)\mbox{ for some }a\in A\}&
\end{align*}
(recall that $E_i(\Gamma)$ is the set of edges of $\Gamma$ of color $i$).

A convenient tool for constructing $k$-subgraphs is the following version of the P. Hall marriage theorem.

\begin{Theorem}
\label{HallRado}
Let $\Gamma$ be a locally finite oriented graph and $k\geq 1$ an integer.
\begin{itemize}
\item[(i)] Assume that for every finite subset $A$ of $V(\Gamma)$ we have $|V^{-}_{\Gamma}(A)|\geq k|A|$. Then $\Gamma$ contains a spanning $k$-subgraph.
\item[(ii)] Suppose now that edges of $\Gamma$ are colored using colors $\{1,\ldots,k\}$.
Assume that for any finite subsets $A_1,\ldots, A_k$ of $V(\Gamma)$ we have
\begin{equation}
\label{HR_new}
|\cup_{i=1}^k V^{-,i}_{\Gamma}(A_i)|\geq \sum_{i=1}^k |A_i|.
\end{equation}
Then $\Gamma$  contains a spanning evenly colored $k$-subgraph.
\end{itemize}
\end{Theorem}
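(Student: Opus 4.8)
The plan is to derive both parts from the classical Hall--Rado theorem on systems of distinct representatives in locally finite bipartite graphs, applied to a suitable bipartite graph $B$ built from $\Gamma$. For part (i), I would form the bipartite graph $B$ whose left vertex class is $k$ disjoint copies $V(\Gamma)\times\{1,\dots,k\}$ of $V(\Gamma)$, whose right vertex class is $V(\Gamma)$ itself, and where $(v,\ell)$ is joined to $w$ precisely when $(w,v)\in E(\Gamma)$ (i.e.\ there is a $\Gamma$-edge from $w$ into $v$). A spanning $k$-subgraph of $\Gamma$ — a subgraph in which every vertex has exactly $k$ incoming edges and at most one outgoing edge — corresponds exactly to a matching of $B$ that saturates the entire left class: the edge matched to $(v,\ell)$ tells us the $\ell$-th chosen in-edge at $v$, and the ``at most one outgoing edge'' condition is automatic because each right vertex $w$ is used at most once. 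Now I would check Hall's condition for $B$: for a finite set $\calF\subseteq V(\Gamma)\times\{1,\dots,k\}$, writing $A$ for its projection to $V(\Gamma)$, the neighborhood of $\calF$ in $B$ is $V^-_\Gamma(A)$, and $|\calF|\le k|A|\le |V^-_\Gamma(A)|$ by hypothesis. Since $\Gamma$ is locally finite, $B$ is locally finite, so the infinite form of Hall's theorem (Hall--Rado) applies and yields the desired matching.

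For part (ii) the construction is the same except that the left class is now the disjoint union of the color-copies in a way that tracks colors: take left vertices $V(\Gamma)\times\{1,\dots,k\}$ again, but now declare $(v,i)$ adjacent to $w$ iff $(w,v)\in E_i(\Gamma)$, i.e.\ there is a $\Gamma$-edge of color $i$ from $w$ to $v$. A left-saturating matching now picks, for each vertex $v$ and each color $i$, exactly one incoming edge of color $i$ (and each $w$ is used at most once as a tail), which is precisely a spanning evenly colored $k$-subgraph. For Hall's condition, given a finite $\calF\subseteq V(\Gamma)\times\{1,\dots,k\}$, set $A_i=\{v:(v,i)\in\calF\}$; then the neighborhood of $\calF$ in $B$ is exactly $\cup_{i=1}^k V^{-,i}_\Gamma(A_i)$ and $|\calF|=\sum_{i=1}^k|A_i|\le|\cup_{i=1}^k V^{-,i}_\Gamma(A_i)|$ by \eqref{HR_new}. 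Again local finiteness of $\Gamma$ gives local finiteness of $B$, and Hall--Rado finishes the argument. Note that (i) is the special case of (ii) in which all $k$ color classes of edges coincide with $E(\Gamma)$; alternatively one can simply cite (ii) to prove (i).

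The one genuine point requiring care — and the main obstacle — is the passage from the finite Hall condition to an infinite matching, since the naive infinite analogue of Hall's theorem is false without a local finiteness hypothesis. Here $\Gamma$ is assumed locally finite, but I must make sure the derived bipartite graph $B$ inherits a usable finiteness property: each left vertex $(v,\ell)$ has finite degree (its neighbors are the tails of the finitely many edges into $v$), but right vertices need not have finite degree unless out-degrees in $\Gamma$ are bounded. The correct tool is therefore the version of the marriage theorem valid when the side to be saturated consists of vertices of finite degree (this is exactly the setting of Hall's theorem for a locally finite bipartite graph where we seek a matching saturating one fixed side; it follows from the finite case by a standard compactness/König's lemma argument, or from the Rado selection principle). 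I would invoke this form explicitly. The remaining steps — verifying that matchings in $B$ biject with the desired subgraphs of $\Gamma$, and that Hall's condition for $B$ is literally the displayed hypothesis — are routine bookkeeping once the dictionary above is set up.
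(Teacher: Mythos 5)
Your proposal is correct and is essentially the paper's argument in bipartite-matching language: the paper likewise indexes by $V(\Gamma)\times\{1,\ldots,k\}$, assigns to $(v,i)$ the finite set of tails of color-$i$ edges into $v$, and applies an infinite Hall-type marriage lemma (valid because these sets are finite, i.e.\ the saturated side has finite degrees), with your Hall-condition verification matching the paper's verbatim. The only cosmetic difference is that for part (i) the paper invokes a $k$-fold version of the marriage lemma directly, whereas you split each vertex into $k$ copies (and also note (i) as a special case of (ii), which the paper does as well).
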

\begin{proof} A slight modification of the usual form of P. Hall theorem (see, for example, \cite[Lemma 5.8.25]{Sa}) asserts the following:
\begin{Lemma}
\label{HallRado_basic} Let $k\ge 1$ be an integer, $\calI$ and $S$ any sets
and $\{S_{\alpha}\}_{\alpha\in \calI}$ a collection of finite subsets of $S$. Suppose that for any finite subset $\calJ\subseteq \calI$ we have $|\cup_{\alpha\in \calJ} S_{\alpha}|\geq k|\calJ|$. Then there exist pairwise disjoint $k$-element subsets $\{X_{\alpha}\}_{\alpha\in \calI}$ such that $X_{\alpha}\subseteq S_{\alpha}$ for all $\alpha$.
\end{Lemma}
This immediately implies part (i).

In the setting of (ii), let $S=V(\Gamma)$, $\calI=V(\Gamma)\times\{1,\ldots, k\}$, and for $\alpha=(v,i)\in\calI$
put $S_{\alpha}=\{w\in V(\Gamma): (w,v)\in E_i(\Gamma)\}$, that is, $S_{\alpha}$ is the set of tail vertices of
edges of color $i$ in $\Gamma$ whose head vertex is $v$. Condition~\eqref{HR_new}
means precisely that Lemma~\ref{HallRado_basic} is applicable to this collection for $k=1$. If $\{x_{\alpha}\}_{\alpha\in \calI}$
is the resulting set of vertices, let $\Lambda$ be the spanning subgraph of $\Gamma$ with edges of the form
$(x_{(v,i)},v)\in E_i(\Gamma)$ for every $(v,i)\in\calI$. It is clear that $\Lambda$ is an evenly colored $k$-subgraph.
\end{proof}
Note that part (i) of Theorem \ref{HallRado} for arbitrary $k\ge 1$ can be easily deduced from part (ii):
starting with an uncolored graph $\Gamma$, we  consider the colored graph $\Gamma_k$ with $V(\Gamma_k)=V(\Gamma)$ and $E_i(\Gamma_k)=E(\Gamma)$ for $i\in\{1,\ldots,k\}$. Then the assumption $|V^{-}_{\Gamma}(A)|\geq k|A|$ ensures
that (ii) is applicable to $\Gamma_k$.

\section{Tarski numbers and extensions}
\label{sec:amenablequotient}

In this section we will prove Theorem~\ref{Tar_combined}.
The proofs of parts (a) and (b) of that theorem will be based on
Proposition~\ref{tarski_technical} below.
For the convenience of the reader we will restate all parts of the above theorem in this section.

Throughout the section $G$ will denote a fixed non-amenable
group and $H$ a subgroup of $G$. When $H$ is normal, $\rho\colon G\to G/H$ will denote the natural homomorphism.
Let $T$ be a right transversal of $H$ in $G$, that is, a subset of $G$ which contains precisely one element from each right coset of $H$.  Thus, there exist
unique maps $\pi_H\colon G\to H$ and $\pi_T\colon G\to T$ such that $g=\pi_H(g)\pi_T(g)$ for all
$g\in G$. We shall also assume that $1\in T$.

\begin{Proposition}
\label{tarski_technical}
Suppose that $G$ has a $k$-paradoxical decomposition with
translating sets $S_1,\ldots, S_k$ and assume that $1\in S_1$.
 Let $S=\cup_{i=1}^k S_i$.
Let $F$ be a subset of $T$, let $\Phi_i=\pi_T(FS_i^{-1})$ and
$\Phi=\pi_T(FS^{-1})=\cup_{i=1}^k \Phi_i$. Finally, let $S'_i=\Phi_i S_i F^{-1}\cap H$.
\begin{itemize}
\item[(i)] Suppose that $|\Phi|= |F|$. Then $H$ has
a $k$-paradoxical decomposition with translating sets $S'_1,\ldots, S'_k$.
Therefore, $\calT(H)\leq \sum_{i=1}^k |S'_i|$.
\item[(ii)] Suppose that $|\Phi|\leq \frac{k}{2}|F|$. Then $H$ has
a $2$-paradoxical decomposition with total translating set $\cup_{i=1}^k S'_i$.
Therefore, $\calT(H)\leq 2\sum_{i=1}^k |S'_i|$.
\end{itemize}
\end{Proposition}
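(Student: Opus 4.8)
The plan is to deduce both parts from a single counting estimate, which bridges Lemma~\ref{paradox} (encoding the hypothesis) and Theorem~\ref{HallRado} (producing the conclusion). We may of course assume $F$ is finite and nonempty. By Lemma~\ref{paradox} the assumed $k$-paradoxical decomposition of $G$ is witnessed by a spanning evenly colored $k$-subgraph $\Gamma$ of $\Cay(G,(S_1,\dots,S_k))$. The only feature of $\Gamma$ I will use is this: every vertex $v$ has, for each color $i$, exactly one incoming edge of color $i$, whose tail I denote $t_i(v)$; and since every vertex has at most one outgoing edge, each map $v\mapsto t_i(v)$ is injective, and for $i\neq j$ the images $\{t_i(v):v\in G\}$ and $\{t_j(v):v\in G\}$ are disjoint. (This is exactly the mechanism behind Lemma~\ref{paradox}.) I will also use repeatedly the $H$-left-invariance $\pi_T(hx)=\pi_T(x)$ for $h\in H$, which is immediate from uniqueness of the factorization $x=\pi_H(x)\pi_T(x)$.

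The estimate I want is: for all finite subsets $A_1,\dots,A_k$ of $H$,
\[
\Bigl|\,\bigcup_{i=1}^{k} A_i(S'_i)^{-1}\,\Bigr|\ \ge\ \frac{|F|}{|\Phi|}\sum_{i=1}^{k}|A_i|.
\]
To prove it I would set $X_i=A_iF\subseteq G$; since $g\mapsto(\pi_H(g),\pi_T(g))$ is a bijection $G\to H\times T$, we have $|X_i|=|A_i|\,|F|$, so the sets $W_i:=V^{-,i}_\Gamma(X_i)=\{t_i(v):v\in X_i\}$ have $|W_i|=|A_i|\,|F|$ and are pairwise disjoint, whence $|\bigcup_iW_i|=\sum_i|A_i|\,|F|$. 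Next I would show $\pi_H(W_i)\subseteq A_i(S'_i)^{-1}$: take $u=t_i(v)\in W_i$, write $v=hf$ with $h=\pi_H(v)\in A_i$, $f=\pi_T(v)\in F$, and let $\sigma\in S_i$ be the label of the color-$i$ edge $(u,v)$ of $\Gamma$ (an edge of $\Cay(G,S_i)$), so $u=v\sigma^{-1}=hf\sigma^{-1}$. With $h':=\pi_H(u)$ and $\phi:=\pi_T(u)$, the $H$-left-invariance gives $\phi=\pi_T(hf\sigma^{-1})=\pi_T(f\sigma^{-1})\in\Phi_i$, hence $(h')^{-1}h=\phi\sigma f^{-1}\in\Phi_iS_iF^{-1}\cap H=S'_i$, so $h'=h\bigl((h')^{-1}h\bigr)^{-1}\in A_i(S'_i)^{-1}$. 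Since moreover $W_i\subseteq H\Phi$ and $\pi_H$ restricted to $H\Phi$ has all fibers of size $|\Phi|$, I conclude $|\bigcup_iA_i(S'_i)^{-1}|\ge|\pi_H(\bigcup_iW_i)|\ge|\bigcup_iW_i|/|\Phi|=\frac{|F|}{|\Phi|}\sum_i|A_i|$.

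Granting the estimate, part (i) follows at once: when $|\Phi|=|F|$ it is exactly hypothesis~\eqref{HR_new} of Theorem~\ref{HallRado}(ii) for the colored graph $\Cay(H,(S'_1,\dots,S'_k))$ (locally finite since $F$ is finite), because there $V^{-,i}(A_i)=A_i(S'_i)^{-1}$; so that graph contains a spanning evenly colored $k$-subgraph, and Lemma~\ref{paradox} converts it into a $k$-paradoxical decomposition of $H$ with translating sets $S'_1,\dots,S'_k$, which contains a $2$-paradoxical one with translating sets $S'_1,S'_2$, giving $\calT(H)\le\sum_i|S'_i|$. For part (ii), put $S=\bigcup_iS'_i$ and apply the estimate with $A_1=\dots=A_k=A$: since $\bigcup_iA(S'_i)^{-1}=AS^{-1}$ and $|\Phi|\le\frac{k}{2}|F|$, we get $|AS^{-1}|\ge 2|A|$ for all finite $A\subseteq H$, which is the hypothesis of Theorem~\ref{HallRado}(i) with $k=2$ for $\Cay(H,S)$. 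So $\Cay(H,S)$ has a spanning $2$-subgraph $\Lambda$; the two incoming edges at each vertex of $\Lambda$ come from distinct sources and so carry distinct labels, and splitting them into a first and a second incoming edge by a fixed ordering of $S$ turns $\Lambda$ into a $2$-paradoxical decomposition of $H$ with total translating set contained in $S$ (the uncolored analogue of Lemma~\ref{paradox}); padding with empty pieces makes the total translating set equal to $\bigcup_iS'_i$, and $\calT(H)\le 2|S|\le 2\sum_i|S'_i|$.

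I expect the crux to be the counting estimate — specifically the facts that $|W_i|=|A_i|\,|F|$ and that the $W_i$ are pairwise disjoint, both of which rest on the exact definition of an evenly colored $k$-subgraph (precisely $k$ incoming edges of distinct colors, at most one outgoing), the disjointness being what prevents the overcounting that would otherwise break the bound. The remaining work — the transversal bookkeeping identifying $\pi_H(t_i(v))$ as an element of $A_i(S'_i)^{-1}$, the appeals to Lemma~\ref{paradox} and its routine uncolored variant, and checking that the resulting pieces are pairwise disjoint and tile $H$ under the prescribed translations — is straightforward.
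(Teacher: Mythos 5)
Your proof is correct and follows essentially the same route as the paper: the same spanning evenly colored $k$-subgraph of $\Cay(G,(S_1,\dots,S_k))$, the same counting (heads in $A_iF$, tail maps injective and color-disjoint, projected to $H$ with multiplicity at most $|\Phi|$), the same label computation $\phi\sigma f^{-1}\in \Phi_iS_iF^{-1}\cap H=S'_i$, and the same appeals to Theorem~\ref{HallRado} and Lemma~\ref{paradox}. The only differences are presentational — you verify the Hall condition directly for $\Cay(H,(S'_1,\dots,S'_k))$ (resp. $\Cay(H,\cup_i S'_i)$) rather than forming the paper's quotient multigraph of the subgraph with heads in $HF$, and you make explicit the harmless standing assumption (implicit in the paper too) that $F$ is finite and nonempty.
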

\begin{proof}
(i) By Lemma~\ref{paradox}, the Cayley graph $\Cay(G,(S_1,\ldots, S_k))$
contains a spanning evenly colored $k$-subgraph $\Gamma$.
Let $\Gamma_0$ be the subgraph of $\Gamma$ with vertex set
$HFS^{-1}=H\Phi$ which contains an edge
$(g,g')$ of color $i$ if and only if $g'\in HF$ and $g\in HFS_i^{-1}=H\Phi_i$.
Note that by construction $\Gamma_0$ contains all edges of $\Gamma$
whose head vertex lies in $HF$.

Let $\Lambda$ be the quotient graph of $\Gamma_0$ in which we glue
two vertices $g$ and $g'$ of $\Gamma_0$ if and only if they have the same $H$-component,
that is, $\pi_H(g)=\pi_{H}(g')$. We do not make any edge identifications during this
process, so typically $\Lambda$ will have plenty of multiple edges.
We can naturally identify the vertex set of $\Lambda$ with $H$.

We claim that the graph $\Lambda$ satisfies the condition of Theorem~\ref{HallRado}(ii).
By construction, every vertex of $\Lambda$ has at most $|\Phi|$ outgoing edges.
Since $|\Phi|=|F|$, it suffices to show that for every $i\in\{1,\ldots, k\}$
and every finite subset $A$ of $H$ we have $|E^{-,i}_{\Lambda}(A)|\geq |F||A|$
where $E^{-,i}_{\Lambda}(A)$ is the set of edges of color $i$ whose head vertex lies in $A$.
The latter statement is clear since (again by construction) every vertex of $\Lambda$
has $|F|$ incoming edges of each color.

Thus, by Theorem~\ref{HallRado}(ii) $\Lambda$ contains a spanning evenly colored $k$-subgraph. By Lemma \ref{paradox}, to finish the proof of (i) it suffices to check that every edge of $\Lambda$ of color $i$ is labeled by an element of $S'_i$. Indeed, every edge $e=(h,h')\in E_i(\Lambda)$ comes from
an edge $(hf_i, h'f)\in E_i(\Gamma)$ for some $f\in F$ and $f_i\in \Phi_i$.
Hence $${\rm label}(e)=h^{-1}h'=f_i((hf_i)^{-1}h'f)f^{-1}\in \Phi_iS_iF^{-1}\cap H=S_i'.$$

\vskip .12cm

The proof of (ii) is almost identical except that we do not keep track of colors.
The same counting argument shows that
$|V^-_{\Lambda}(A)|\geq\frac{k|A||F|}{|\Phi|}\geq 2|A|$. Hence by Theorem~\ref{HallRado}(i),
$\Lambda$ contains a spanning $2$-subgraph, and by the above computation
all edges of $\Lambda$ are labeled by elements of $S'=\cup_{i=1}^k S'_i$.
Hence $\Cay(G,S')$ contains  a spanning $2$-subgraph, so $\Cay(G,(S',S'))$ contains
a spanning evenly colored $2$-subgraph.
\end{proof}

\begin{Theorem5}
Let $G$ be a non-amenable group and $H$ a subgroup of finite index in $G$.
Then $\calT(H)-2\leq [G:H](\calT(G)-2)$.
\end{Theorem5}
\begin{proof}
Choose a paradoxical decomposition of $G$ with translating sets $S_1$ and $S_2$
such that $|S_1|+|S_2|=\calT(G)$ and $1\in S_1\cap S_2$ (this is possible by Remark~\ref{pd:translation}),
 and apply Proposition~\ref{tarski_technical}
to that decomposition with $F=T$. Then
$\Phi_i=\Phi=T$ for each $i$, so hypotheses of Proposition~\ref{tarski_technical}(i) hold.

Note that for each $t'\in T$ and $g\in G$ there exists a unique $t\in T$
such that $t'gt^{-1}\in H$. Moreover, if $g=1$, then $t=t'$ and therefore
$t'gt^{-1}=1$. Hence for $i=1,2$ we have
$$|S'_i|=|\Phi_iS_iF^{-1}\cap H|=|TS_i T^{-1}\cap H|\le |T|(|S_i|-1)+1$$ (here we use the fact that each $S_i$ contains $1$).
Hence $\calT(H)\leq |S_1'|+|S_2'|\leq [G:H](|S_1|+|S_2|-2)+2=[G:H](\calT(G)-2)+2$.
\end{proof}

For the proof of Theorem~\ref{Tar_combined}(b) we will need the following two additional lemmas.

\begin{Lemma}
\label{lemma:variety} Let a variety $\mathcal V$ be as in Theorem~\ref{Tar_combined}(b). Then there exists a function $g\colon \mathbb{N}\to \mathbb{N}$ such that for every
$Z\in {\mathcal V}$ and every $n$-element subset $U\subseteq Z$ there exists a finite set $F\subseteq Z$ with $|F|\leq g(n)$, for which $|FU|\le 2|F|.$
\end{Lemma}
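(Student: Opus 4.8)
The plan is to reduce the assertion --- which must be uniform over all $Z\in\mathcal V$ --- to a single statement about the relatively free group of $\mathcal V$, where amenability makes the needed finiteness automatic; right-orderability is precisely the ingredient that makes this reduction work.

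\emph{Reductions.} Given $Z\in\mathcal V$ and $U\subseteq Z$ with $|U|=n$, first replace $Z$ by the subgroup $\langle U\rangle$, which lies in $\mathcal V$ since varieties are closed under subgroups; so we may assume $Z$ is generated by $U$. Next replace $U$ by $U\cup U^{-1}\cup\{1\}$: this changes $n$ to at most $2n+1$ and only makes the conclusion $|FU|\le 2|F|$ harder to achieve (enlarging $U$ enlarges $FU$), so it suffices to treat $U=U^{-1}\ni 1$. Now fix a surjection $\rho\colon R\to Z$, where $R$ is the relatively free group of $\mathcal V$ on $n$ generators, chosen together with a finite generating set $X\subseteq R$ with $1\in X$ and $\rho(X)=U$; let $N=\ker\rho$. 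Since $R\in\mathcal V$ it is amenable (and, being right-orderable, torsion-free, hence infinite unless trivial --- in which case the lemma is vacuous). Thus for every $\eps>0$ there is a nonempty finite $C_\eps\subseteq R$ with $|C_\eps X|\le(1+\eps)|C_\eps|$; fix one of minimal size. Crucially, $R$ --- and hence $C_\eps$ and everything below --- depends only on $\mathcal V$ and $n$, which is all that is permitted. We will take $g(n)$ to be (a constant multiple of) $|C_{\eps(n)}|$ for a suitable $\eps(n)$.

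\emph{Push-forward and a dichotomy.} Put $\bar C=\rho(C_\eps)$ and $\bar X=\rho(X)=U$. Since $\bar C\bar X=\rho(C_\eps X)$ and any element of $\bar C\bar X\setminus\bar C$ is $\rho(w)$ for some $w\in C_\eps X\setminus C_\eps$, we get $|\bar C\bar X|\le|\bar C|+\bigl(|C_\eps X|-|C_\eps|\bigr)\le|\bar C|+\eps|C_\eps|$ (using $C_\eps\subseteq C_\eps X$). Hence, if $\rho$ does not collapse $C_\eps$ by more than a factor $1/\eps$, i.e.\ $|\rho(C_\eps)|\ge\eps|C_\eps|$, then $|\bar C\bar X|\le 2|\bar C|$ and $F:=\bar C$ works with $|F|\le|C_\eps|$. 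Moreover, a Følner set $C_\eps$ with $\eps<1$ cannot lie in any proper subgroup $K$ of $R$: picking $x\in X\setminus K$, the coset $C_\eps x$ is disjoint from $C_\eps$, forcing $|C_\eps X\setminus C_\eps|\ge|C_\eps|$, a contradiction. So $\langle C_\eps\rangle=R$ and $\langle\rho(C_\eps)\rangle=Z$; thus in the remaining ``high-collapse'' case $Z$ is generated by the set $\rho(C_\eps)$, which has fewer than $\eps|C_\eps|$ elements.

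\emph{The main obstacle: the high-collapse case.} Here right-orderability is indispensable --- in a general amenable group a Følner set need not project to a Følner set of a quotient --- so one must use the order. Fix a right-invariant order $\le$ on $R$. Its convex subgroups form a chain (if $A,B$ are convex with $a\in A\setminus B$, $b\in B\setminus A$ both $>1$, then $\min(a,b)$ lies in whichever of $A,B$ contains the larger, a contradiction), so there is a smallest convex normal subgroup $\widehat N\supseteq N$; then $R/\widehat N$ is again right-orderable and lies in $\mathcal V$, and $M:=\widehat N/N$ is normal in $Z$ with $Z/M\cong R/\widehat N$. The plan is to choose $C_\eps$ to be a finite union of $\le$-intervals and to exploit the heavy collapse: either $N$ is cofinal in $\widehat N$ strongly enough that the $\le$-least representatives of the cosets of $N$ met by $C_\eps$ form a transversal that is again Følner in $R$ and injects into $Z$ (so the push-forward argument of the previous paragraph applies to it), or the collapse forces $\widehat N/N$ to be suitably controlled and one recurses on $R/\widehat N$, the recursion terminating since the chain of convex subgroups of $R$ below $\widehat N$ has length bounded in terms of $R$. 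I expect the delicate points to be the normality and the interaction with $\rho$ of the convex-closure construction, and the verification that restricting a union-of-intervals Følner set to cosets of $N$ yields Følner transversals; everything outside this is elementary counting.
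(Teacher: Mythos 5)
Your reduction to the relatively free group and your ``low-collapse'' case are fine, but they dispose only of the easy half of the statement; the entire difficulty of the lemma is the high-collapse case, and there your text stops at a plan whose crucial steps are neither proved nor, as stated, reliable. Concretely: (i) you never verify that the $\prec$-least representatives of the $N$-cosets met by a union-of-intervals F{\o}lner set again form a F{\o}lner set injecting into $Z$ --- this is exactly the point where right-orderability has to do real work, and it is left as a hope; (ii) the proposed recursion through convex subgroups has no termination guarantee: for a general right order the chain of convex subgroups of a finite-rank relatively free group can be infinite (already $\mathbb{Z}\wr\mathbb{Z}$, which embeds in the free metabelian group of rank $2$, carries right orders with infinitely many convex subgroups, and lexicographic extension puts such an order on the ambient group), so ``length bounded in terms of $R$'' is unjustified; (iii) the dichotomy ``either $N$ is cofinal in $\widehat N$ strongly enough \dots or the collapse forces $\widehat N/N$ to be suitably controlled'' is not a precise statement, let alone a proved one. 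Since you yourself flag these as the delicate points, the argument as written does not establish the lemma.

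For comparison, the paper's proof needs no case distinction and no convex-subgroup analysis; the order is used only to select representatives inside one fixed F{\o}lner set. Let $V_n$ be the relatively free group on $X=\{x_1,\dots,x_n\}$, take $F'\subseteq V_n$ with $|F'X|\le 2|F'|$ of \emph{minimal cardinality}, let $\phi\colon V_n\to Z$ send $x_i$ to the elements of $U$, fix a right-invariant order $\prec$, and let $D\subseteq F'$ be the set of elements that are not $\prec$-least in their $\phi$-fiber inside $F'$. Minimality of $|F'|$ forces $|DX|>2|D|$ when $D\neq\emptyset$ (as $D$ is a proper subset of $F'$), while right-invariance gives, for each $r\in D$ and $x\in X$, some $f\in F'$ with $fx\prec rx$ and $\phi(fx)=\phi(rx)$; hence $\phi(F')U=\phi(F'X)=\phi(F'X\setminus DX)$ and $|\phi(F')U|\le |F'X|-|DX|\le 2\bigl(|F'|-|D|\bigr)=2|\phi(F'\setminus D)|=2|\phi(F')|$, so $F=\phi(F')$ works with $g(n)=|F'|$. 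This single counting absorbs arbitrary collapse of $\phi$ on $F'$, which is precisely what your sketch leaves open; if you want to salvage your approach, this fiber-selection trick is the missing ingredient.
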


\begin{proof} Fix $n\in\dbN$. Let $V_n$ be the relatively free group of $\mathcal V$ of rank $n$. Let
$X=\{x_1,\ldots,x_n\}$ be the set of free generators of $V_n$. Since $V_n$ is amenable, by the F{\o}lner criterion \cite{Wa},
there exists a finite subset $F'\subseteq V_n$ such that $|F'X|\le 2|F'|$. We can assume that $F'$
has the smallest possible number of elements and define $g(n)=|F'|$. Given a group $Z
\in\mathcal V$ and an $n$-element subset $U\subseteq Z$, let $\phi$ be the homomorphism
$V_n\to Z$ which sends $x_i$ to the corresponding element of $U$. Let $\prec$ be a total
right-invariant order on $V_n$. Let
$$R=\{r\in F': \mbox{ there exists } f\in F'\mbox{ with }f\prec r
\mbox{ and }\phi(f)=\phi(r)\}.$$
Clearly $R$ is a  proper subset of $F'$, so the minimality of $F'$ implies $|RX|>2|R|$. Note that
$\phi(F')=\phi(F'\setminus R)$ and $\phi$ is injective on $F'\setminus R$.
Since $\prec$ is right-invariant, for every $x\in X$ and $r\in R$ there exists $f\in F'$ such that $fx\prec rx$ and $\phi(fx)=\phi(rx)$. Hence $\phi(F')U=\phi(F'X)=\phi(F'X\setminus RX)$, so
$$\begin{array}{l}|\phi(F')U|\le |\phi(F'X\setminus RX)| \le |F'X|-|RX|\le 2|F'|-2|R|\\=2|F'\setminus R|=2|\phi(F'\setminus R)|= 2|\phi(F')|.\end{array}$$ Thus we can take $F=\phi(F')$.
\end{proof}

\begin{Lemma}
\label{double_up}
Suppose that a group $G$ has a $k$-paradoxical decomposition with
translating sets $S_1,\ldots, S_k$ and an $l$-paradoxical decomposition with
translating sets $T_1,\ldots, T_l$. Then $G$ has a $kl$-paradoxical decomposition with
translating sets $\{S_i T_j\}$.
\end{Lemma}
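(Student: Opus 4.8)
The plan is to use the graph-theoretic reformulation from Lemma~\ref{paradox}. By that lemma, the hypotheses give a spanning evenly colored $k$-subgraph $\Gamma$ of $\Cay(G,(S_1,\ldots,S_k))$ and a spanning evenly colored $l$-subgraph $\Delta$ of $\Cay(G,(T_1,\ldots,T_l))$; write the colors of $E(\Gamma)$ as $\{1,\ldots,k\}$ and those of $E(\Delta)$ as $\{1,\ldots,l\}$. I will construct a spanning evenly colored $kl$-subgraph $\Pi$ of $\Cay\bigl(G,(S_iT_j)_{1\le i\le k,\,1\le j\le l}\bigr)$, the colors of $\Pi$ being the pairs $(i,j)$, by ``composing paths of length two'': first follow a $\Gamma$-edge, then a $\Delta$-edge. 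Concretely, $V(\Pi)=G$, and for $g\in G$ we put an edge of color $(i,j)$ from $g$ to $g''$ whenever $g$ has an outgoing edge $(g,g')\in E_i(\Gamma)$ and $g'$ has an outgoing edge $(g',g'')\in E_j(\Delta)$. If these edges have labels $s\in S_i$ and $t\in T_j$, then $g'=gs$ and $g''=gst$, so the new edge has label $st\in S_iT_j$ and indeed lies in $\Cay(G,(S_iT_j))$.

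The next step is to check that $\Pi$ is a spanning evenly colored $kl$-graph, after which Lemma~\ref{paradox} finishes the proof. Since each vertex of $\Gamma$ (resp.\ $\Delta$) has at most one outgoing edge, the recipe assigns at most one outgoing edge to each vertex of $\Pi$. For incoming edges, fix $v\in G$ and a color $(i,j)$: in $\Delta$ there is exactly one incoming edge of color $j$ at $v$, with some tail $w$, and in $\Gamma$ there is exactly one incoming edge of color $i$ at $w$, with some tail $u$; composing $(u,w)\in E_i(\Gamma)$ with $(w,v)\in E_j(\Delta)$ gives a color-$(i,j)$ edge of $\Pi$ from $u$ to $v$. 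Conversely, any color-$(i,j)$ edge of $\Pi$ into $v$ factors as $g\to g'\to v$ with $(g,g')\in E_i(\Gamma)$ and $(g',v)\in E_j(\Delta)$, which forces $g'=w$ and then $g=u$. Hence each vertex of $\Pi$ has precisely $kl$ incoming edges, exactly one of each color $(i,j)$; in particular the incoming edges at a vertex have distinct colors. So $\Pi$ is a spanning evenly colored $kl$-subgraph of $\Cay(G,(S_iT_j))$, and Lemma~\ref{paradox} yields a $kl$-paradoxical decomposition of $G$ with translating sets $\{S_iT_j\}$.

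This argument is essentially bookkeeping and contains no real obstacle; the one point requiring care is the order of composition — following a $\Gamma$-edge and then a $\Delta$-edge produces labels in $S_iT_j$, whereas the opposite order would produce labels in $T_jS_i$. (Alternatively, one can argue directly on the pieces: if $\{P_{i,j}\}$ and $\{Q_{a,b}\}$ are the pieces of the two decompositions, with $G=\bigsqcup_j P_{i,j}g_{i,j}$ for each $i$ and $G=\bigsqcup_b Q_{a,b}h_{a,b}$ for each $a$, set $R_{(i,a),(j,b)}=\{p\in P_{i,j}:\ pg_{i,j}\in Q_{a,b}\}$. These sets are pairwise disjoint because the $P$'s and the $Q$'s each are, and for each fixed pair $(i,a)$ one checks $G=\bigsqcup_{j,b}R_{(i,a),(j,b)}\,g_{i,j}h_{a,b}$ by tracing an element of $G$ first through the $l$-decomposition and then through the $k$-decomposition. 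The graph version above is merely the cleaner packaging of this bijection.)
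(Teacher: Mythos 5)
Your proof is correct and follows essentially the same route as the paper: both compose the two spanning evenly colored subgraphs along oriented paths of length two (first an $S$-edge, then a $T$-edge), color the composed edges by pairs $(i,j)$, and invoke Lemma~\ref{paradox} in both directions; you merely spell out the verification that the composed graph is an evenly colored $kl$-graph, which the paper declares to be clear.
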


\begin{proof}
By Lemma \ref{paradox}, the Cayley graphs $\Cay(G,(S_1,\dots,S_k))$ and $\Cay(G,(T_1,\dots,T_l))$ have spanning
evenly colored $k$-subgraph $\Gamma_k$ and $l$-subgraph $\Gamma_l$, respectively. Let $\Gamma=\Gamma_k\Gamma_l$ be the graph with vertex set $G$ and edge set
$$E(\Gamma)=\{(g,g')\in G\times G:  \mbox{there is } x\in G \mbox{ s.t. } (g,x)\in E(\Gamma_k)
\mbox{ and } (x,g')\in E(\Gamma_l) \}.$$
In other words, edges of $\Gamma$ are (oriented) paths of length $2$ where the first
edge of the path lies in $\Gamma_k$ and the second lies in $\Gamma_l$. Using the colorings
of $\Gamma_k$ and $\Gamma_l$, we can naturally color $E(\Gamma)$ with $kl$ colors. It
is clear that $\Gamma$ will become a spanning evenly colored $kl$-graph, representing a $kl$-paradoxical decomposition with translating sets $\{S_i T_j\}$.
\end{proof}

\begin{Theorem7}
Let $\mathcal{V}$ be a variety of groups where all groups are amenable and relatively free groups are right orderable. Then there exists a function $f\colon \mathbb{N}\to \mathbb{N}$ (depending only on $\mathcal{V}$)
with the following property: if a non-amenable group $G$ has a normal subgroup $H$ such that
$G/H\in \mathcal{V}$, then $\calT(H)\leq f(\calT(G))$.
\end{Theorem7}

\begin{proof}
Let $n=\calT(G)$ and $Z=G/H$. Recall that $\rho\colon G\to Z$ is the natural homomorphism. By Lemma~\ref{double_up}, $G$ has a $4$-paradoxical decomposition
with translating sets $S_1,S_2,S_3,S_4$ such that $1\in S_1$ and
  $$\sum_{i=1}^4 |S_i|\leq n^2.$$

Let $S=\cup_{i=1}^4 S_i$. By
Lemma~\ref{lemma:variety} applied to the set $U=\rho(S^{-1})$, there exists $\overline F\subseteq Z$
with $|\overline F|\leq g(n^2)$ such that $|\overline F U|\leq 2|\overline F|$. Let $F=\rho^{-1}(\overline F)\cap T$.
Then $|F|=|\overline F|$ and $\rho$ bijectively maps $\Phi=\pi_T(FS^{-1})$ onto $\overline F U$.
Therefore, hypotheses of Proposition~\ref{tarski_technical}(ii) hold, and we deduce
that $H$ has a $2$-paradoxical decomposition with total translating set
$\cup_{i=1}^4 S_i'$ (where $S_i'$ are defined as in Proposition~\ref{tarski_technical}).
Clearly, $|S_i'|\leq |S_i||F||\Phi|=|S_i||F||\overline F U|\leq 2|S_i||F|^2$, and therefore
$\calT(H)\leq 2\sum_{i=1}^4 2|S_i||F|^2\leq 4n^2 g(n^2)^2$.
\end{proof}

For the proof of Theorem~\ref{Tar_combined}(c) we will need the following lemma.

\begin{Lemma}\label{Folner}
Assume that $H$ is normal and amenable.
Suppose that $U_1,U_2\sub G$ are finite subsets such that for every pair of finite subsets $F_1,F_2\sub G$
we have $|\bigcup_{i=1}^{2}F_iU_i|\ge\sum_{i=1}^2 |F_i|$. Let $U_i'=\rho(U_i)$. Then for every pair of finite subsets $F_1',F_2'\sub G/H$ we have $|\bigcup_{i=1}^{2}F_i'U_i'|\ge\sum_{i=1}^2 |F_i'|$.
\end{Lemma}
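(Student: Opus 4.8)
The plan is to deduce the Følner-type inequality on $G/H$ from the one on $G$ by lifting finite subsets of $G/H$ along $\rho$ and exploiting amenability of $H$. Given finite subsets $F_1',F_2'\subseteq G/H$, I would first choose a set-theoretic section $\sigma\colon G/H\to G$ of $\rho$ (for instance $\sigma$ taking values in the transversal $T$), and set $\widetilde F_i^{(0)}=\sigma(F_i')$, so that $\rho$ maps $\widetilde F_i^{(0)}$ bijectively onto $F_i'$. The difficulty is that $\rho(\widetilde F_i^{(0)} U_i)$ can be much smaller than $\widetilde F_i^{(0)} U_i$ because distinct elements of $\widetilde F_i^{(0)} U_i$ may lie in the same coset of $H$; so a naive application of the hypothesis on $G$ only gives $|\widetilde F_i^{(0)} U_i|\ge\sum|F_i'|$, which does not descend. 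The fix is to "thicken" the lifts by a large Følner set of $H$.

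Concretely, I would use that $H$ is amenable to pick, for any $\varepsilon>0$, a finite $K\subseteq H$ which is $(\varepsilon, U_i U_j^{-1})$-invariant for all the finitely many pairs $i,j$ — more precisely, so that $|KV|\le (1+\varepsilon)|K|$ for $V$ ranging over the finite set $\bigcup_{i,j}(U_i U_j^{-1}\cap H)$ and its translates that show up, and also $(1+\varepsilon)$-invariant enough that the counting below works. Then set $F_i=K\cdot\widetilde F_i^{(0)}$. Since $H$ is normal, $K\widetilde F_i^{(0)} U_i = \widetilde F_i^{(0)}(\widetilde F_i^{(0)-1}K\widetilde F_i^{(0)})U_i$ and $\widetilde F_i^{(0)-1}K\widetilde F_i^{(0)}\subseteq H$, so $F_i U_i$ is contained in $\widetilde F_i^{(0)}\cdot(H\text{-stuff})\cdot U_i$; the key point is that $F_i U_i$ decomposes into $|F_i'|$ "columns" indexed by $F_i'$, and within each column all elements with a fixed image under $\rho$ form an $H$-coset fiber of controlled size (at most roughly $|K|\cdot\max_i|U_i|$, because conjugation by elements of $\widetilde F_i^{(0)}$ permutes $H$). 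Meanwhile $|F_i| = |K|\,|F_i'|$ since $K\subseteq H$ and the $\widetilde F_i^{(0)}$ lie in distinct cosets. Applying the hypothesis on $G$ to $F_1,F_2$ gives $|\bigcup_i F_i U_i|\ge |K|(|F_1'|+|F_2'|)$.

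Now I push forward by $\rho$: $\rho(\bigcup_i F_i U_i) = \bigcup_i F_i' U_i'$, and each fiber of $\rho$ restricted to $\bigcup_i F_i U_i$ has size at most $C(K):=|K|\cdot(\text{bounded correction depending on }U_i\text{ and the }\varepsilon\text{-invariance})$. Hence
\begin{equation*}
|F_1'U_1'\cup F_2'U_2'| \;=\; |\rho(F_1U_1\cup F_2U_2)| \;\ge\; \frac{|F_1U_1\cup F_2U_2|}{C(K)} \;\ge\; \frac{|K|}{C(K)}\bigl(|F_1'|+|F_2'|\bigr).
\end{equation*}
The main obstacle is bookkeeping the constant: I need $C(K)/|K|\to 1$ as the invariance parameter of $K$ improves, which is exactly what the Følner condition on $H$ delivers, using that $\widetilde F_i^{(0)-1}K\widetilde F_i^{(0)}$ is again a subset of $H$ of the same size and that the relevant "overlap" sets $\widetilde F_i^{(0)} U_i\cap \widetilde F_i^{(0)} U_i h$ for $h\in H$ are governed by $U_i U_i^{-1}\cap$ (conjugates of $H$). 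Letting the invariance of $K$ tend to perfect, the ratio $|K|/C(K)\to 1$, and since $|F_1'U_1'\cup F_2'U_2'|$ is an integer independent of $K$, we conclude $|F_1'U_1'\cup F_2'U_2'|\ge |F_1'|+|F_2'|$, as desired.
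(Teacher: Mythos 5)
Your overall strategy is the same as the paper's: fix $F_1',F_2'$, lift them to coset representatives via a section $\sigma$, thicken the lifts by a F{\o}lner set $K$ of $H$, apply the hypothesis in $G$ to the thickened sets, push back down to $G/H$ by comparing $|F_1U_1\cup F_2U_2|$ with $|F_1'U_1'\cup F_2'U_2'|$ through the fibers of $\rho$, and let the invariance parameter tend to $0$. The step that does not work as written is your specification of $K$ and the resulting fiber bound $C(K)\le(1+\varepsilon)|K|$. For $\bar x\in F_1'U_1'\cup F_2'U_2'$, the fiber of $\rho$ over $\bar x$ inside $F_1U_1\cup F_2U_2$ is $\bigl(\bigcup K\,\pi_H(\sigma(f')u)\bigr)\sigma(\bar x)$, the union taken over pairs $(f',u)$ with $f'\in F_i'$, $u\in U_i$, $f'\rho(u)=\bar x$; so the overlaps among the translates $K\,\pi_H(\sigma(f'_1)u_1)$ and $K\,\pi_H(\sigma(f'_2)u_2)$ are governed by the elements $\sigma(f'_1)u_1u_2^{-1}\sigma(f'_2)^{-1}\in H$. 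These are \emph{not} elements of $U_iU_j^{-1}\cap H$, nor conjugates of such by any family independent of $F_1',F_2'$: the $H$-components $\pi_H(\sigma(f')u)$ can be essentially arbitrary elements of $H$, varying with the cosets represented by $\sigma(f')$. Consequently a $K$ chosen only to be $(\varepsilon,\bigcup_{i,j}U_iU_j^{-1}\cap H)$-invariant yields no better fiber bound than $C(K)\le(|U_1|+|U_2|)|K|$, and after dividing you only get $|F_1'U_1'\cup F_2'U_2'|\ge(|F_1'|+|F_2'|)/(|U_1|+|U_2|)$, not the claimed inequality. The hedge ``invariant enough that the counting below works'' is exactly where the missing content lies.

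The gap is reparable inside your framework, and the repair is precisely the paper's choice: since $F_1',F_2'$ are fixed before $K$ is chosen, set $U_H=\pi_H\bigl(\sigma(F_1'\cup F_2')(U_1\cup U_2)\bigr)\subseteq H$ --- a finite set that \emph{depends on} $F_1',F_2'$ --- and take $K$ with $|KU_H|<(1+\varepsilon)|K|$. Then each fiber is contained in a translate of $KU_H$; equivalently one has the containment $F_iU_i\subseteq KU_H\,\sigma(F_i'U_i')$ used in the paper, giving $|F_1U_1\cup F_2U_2|\le|KU_H|\,|F_1'U_1'\cup F_2'U_2'|$, and your limiting argument then goes through verbatim. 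The essential point your write-up misses is that the finite subset of $H$ with respect to which the F{\o}lner set must be almost invariant cannot be described in terms of $U_1,U_2$ alone; it must consist of the $H$-components of the lifted products, hence must depend on the sets $F_1',F_2'$ being tested.
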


\begin{proof}
Let $U=U_1\cup U_2$. Let $\psi \colon G/H\to T$ be the unique map such that
$\rho\psi (gH)=gH$ for all $g\in G$. Note that $\psi$ is a bijection and
$\psi\rho(g)=\pi_T(g)$ for all $g\in G$.

Fix $\eps>0$.
Let $F_1',F_2'\sub G/H$ be finite sets, let $F_i''=\psi(F_i')$ and $F''=F_1''\cup F_2''$. Let $U_H=\pi_H(F''U)$.
Since $U_H\sub H$ is a finite subset of the amenable group $H$, by F{\o}lner's criterion,  there exists a finite set $F_H\sub H$ such that $|F_H U_H|<(1+\eps)|F_H|$.
Define $F_i=F_HF_i''\sub G$. Since $F_H\sub H$ and $F_i''\sub T$, we have
$|F_i|=|F_H||F_i''|=|F_H||F_i'|$.

Note that $F_iU_i\sub (F_HU_H)\psi(F_i'U_i')$. Indeed,
\begin{multline*}
F_iU_i=F_H F_i'' U_i\sub F_H \pi_H(F_i'' U_i)\pi_T(F_i'' U_i)\sub
F_H U_H\cdot \psi\rho(F_i''U_i)\\
= F_H U_H \psi(\rho(F_i'')\rho(U_i))=F_H U_H\psi(F_i'U_i').
\end{multline*}
Therefore, $\bigcup_{i=1}^{2}F_iU_i\sub \bigcup_{i=1}^{2} F_H U_H \psi(F_i'U_i')=F_HU_H\psi(\bigcup_{i=1}^{2}F_i'U_i')$.
Hence, $|\bigcup_{i=1}^{2}F_iU_i|$ $\le |F_HU_H||\bigcup_{i=1}^{2}F_i'U_i'|$. Since $|\bigcup_{i=1}^{2}F_iU_i|\ge\sum_{i=1}^2 |F_i|$
by the hypotheses of the theorem, we get $|F_HU_H||\bigcup_{i=1}^{2}F_i'U_i'|\ge \sum_{i=1}^2 |F_H||F_i'|$. Hence $(1+\eps)|F_H||\bigcup_{i=1}^{2}F_i'U_i'|\ge \sum_{i=1}^2 |F_H||F_i'|$ and $(1+\eps)|\bigcup_{i=1}^{2}F_i'U_i'|\ge \sum_{i=1}^2 |F_i'|$. Since the inequality holds for every $\eps>0$, we conclude that $|\bigcup_{i=1}^{2}F_i'U_i'|\ge \sum_{i=1}^2 |F_i'|$.
\end{proof}

\begin{Theorem8}
Let $G$ be a non-amenable group with an amenable normal subgroup $H$.
Then $\calT(G/H)=\calT(G).$
\end{Theorem8}

\begin{proof}
Let $n=\calT(G)$ and choose a paradoxical decomposition of $G$ with $|S_1|+|S_2|=n$.
By Lemma~\ref{paradox}, the Cayley graph $\Cay(G,(S_1,S_2))$
contains a spanning evenly colored $2$-subgraph $\Gamma$. In particular, for every pair of finite subsets $F_1,F_2\sub G$ we have $|\bigcup_{i=1}^{2}F_iS_i^{-1}|\ge\sum_{i=1}^2 |F_i|$. By Lemma~\ref{Folner} the Cayley graph $\Gamma' =$ $\Cay(G/H,(\rho(S_1),\rho(S_2)))$ satisfies the condition of Theorem~\ref{HallRado}(ii) with $k=2$. Thus, $\Gamma'$ contains a spanning evenly colored $2$-subgraph. Therefore $\calT(G/H)\le |\rho(S_1)|+|\rho(S_2)|\le\calT(G)$.
\end{proof}

To prove Theorem~\ref{Tar_combined}(d) we will use the following variation of Lemma~\ref{Folner}.

\begin{Lemma}\label{U_GxG}
Let $G=H_1\times H_2$, and let $U\sub G$ be a finite subset such that for each finite subset $F\sub G$
we have $|FU|\ge 2|F|$. Let $U_1=\pi_1(U)$ and $U_2=\pi_2(U)$ where $\pi_1$ and $\pi_2$ are the projections onto $H_1$ and $H_2$ respectively.
Then for some $i\in\{1,2\}$ for any finite subset $F_i\sub H_i$ we have $|F_i(U_i)^2|\ge 2|F_i|$.
\end{Lemma}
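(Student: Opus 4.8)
The plan is to argue by contraposition. Suppose the conclusion fails, i.e.\ for \emph{each} $i\in\{1,2\}$ there is a finite set $F_i\subseteq H_i$ with $|F_i(U_i)^2|<2|F_i|$; I will use these to build a finite set $F\subseteq G$ violating the hypothesis $|FU|\ge 2|F|$.

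The set I would take is simply the product box $F=F_1\times F_2\subseteq G$, so that $|F|=|F_1|\,|F_2|$. The point is not to compare $FU$ directly with the sets $F_iU_i$ (the hypothesis gives no information about $|F_iU_i|$), but to pass to the square $U^2=UU$. Iterating the hypothesis — once for $F$ and once for $FU$ — gives
\[
|FU^2|=|(FU)U|\ge 2|FU|\ge 4|F|.
\]
On the other hand $U\subseteq U_1\times U_2$, and multiplication in $H_1\times H_2$ is componentwise, so $U^2\subseteq (U_1)^2\times (U_2)^2$ and hence $FU^2\subseteq \big(F_1(U_1)^2\big)\times\big(F_2(U_2)^2\big)$. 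Therefore
\[
|FU^2|\le |F_1(U_1)^2|\cdot|F_2(U_2)^2|<4|F_1|\,|F_2|=4|F|.
\]
These two inequalities are incompatible, which is the desired contradiction.

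There is essentially no real obstacle here; the one thing to get right is the bookkeeping of exponents. The factor $2$ in the hypothesis has to be spent \emph{twice} on the left-hand side (producing $U^2$ and the bound $4|F|$) in order to absorb the two independent factors of $2$ coming from $F_1$ and $F_2$ on the right-hand side — using only one copy of $U$ would leave the useless bound $|FU|\le |F_1U_1|\,|F_2U_2|$. This is the same transport-of-a-F{\o}lner/Hall-type-inequality idea used in Lemma~\ref{Folner}, now carried out between the direct product $G=H_1\times H_2$ and its two factors rather than between a group and a quotient.
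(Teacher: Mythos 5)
Your proof is correct, and it takes a genuinely different route from the paper's. The paper spends the hypothesis only once, on the single box $F=F_1\times F_2$, and runs a dichotomy with threshold $\sqrt{2}$: either $|F_1U_1|\ge \sqrt{2}\,|F_1|$ for every finite $F_1\subseteq H_1$, in which case replacing $F_1$ by $F_1U_1$ gives $|F_1(U_1)^2|\ge 2|F_1|$ and $i=1$ works; or there is a bad $F_1$ with $|F_1U_1|<\sqrt{2}\,|F_1|$, and then the containment $FU\subseteq F_1U_1\times F_2U_2$ together with $|FU|\ge 2|F|$ forces $|F_2U_2|\ge \sqrt{2}\,|F_2|$ for every finite $F_2\subseteq H_2$, and one squares again to get $i=2$. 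You instead argue by contraposition and spend the hypothesis twice (on $F$ and on $FU$) to get $|FU^2|\ge 4|F|$, which you contradict with the componentwise bound $FU^2\subseteq F_1(U_1)^2\times F_2(U_2)^2$, of size $<4|F_1|\,|F_2|=4|F|$ (note the alleged counterexample sets $F_1,F_2$ are automatically nonempty, so the strict product inequality is legitimate). Both arguments rest on the same product-box idea; yours is more symmetric and avoids the irrational threshold and case split, while the paper's yields the slightly stronger intermediate fact that for some $i$ the set $U_i$ itself $\sqrt{2}$-expands every finite subset of $H_i$, which is then squared. For the application to Theorem 1(d) the two are interchangeable.
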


\begin{proof}
If $|F_1U_1|\ge \sqrt{2}|F_1|$ for each finite subset $F_1\sub H_1$,
then replacing the finite subset $F_1$ by $F_1U_1$, we get $|F_1(U_1)^2|=|(F_1U_1)U_1|\ge \sqrt{2}|F_1U_1|\ge 2|F_1|$ and we are done.
Otherwise, fix $F_1\sub H_1$ such that $|F_1U_1|<\sqrt{2}|F_1|$.
Given a finite subset $F_2\sub H_2$, let $F=F_1\times F_2$. Note that $U\sub U_1\times U_2$ implies that $$(F_1U_1)\times (F_2U_2)=(F_1\times F_2)(U_1\times U_2)\supseteq FU.$$
Therefore $|F_1U_1||F_2U_2|\ge |FU|\ge 2|F|=2|F_1||F_2|$. Hence $|F_2U_2|\ge 2 \frac{|F_1|}{|F_1U_1|} |F_2|\ge \sqrt{2}|F_2|$. As before replacing $F_2$ by $F_2U_2$ yields the required inequality.
\end{proof}

\begin{Theorem9}
Let $G=H_1\times H_2$ be a non amenable group. Then $\min\{\calT(H_1),\calT(H_2)\}\leq 2(\calT(G)-1)^2$.
\end{Theorem9}

\begin{proof}
Let $n=\calT(G)$ and choose a paradoxical decomposition of $G$ with $|S_1| + |S_2| = n$ and $1 \in S_1\cap S_2$.
By Lemma~\ref{paradox}, the Cayley graph $\Cay(G,(S_1,S_2))$
contains a spanning evenly colored $2$-subgraph $\Gamma$. In particular, for $U=S_1\cup S_2$ and every finite $F\sub G$ we have $|FU^{-1}|\ge 2|F|$. Let $U_1=\pi_1(U)$, $U_2=\pi_2(U)$. By Lemma~\ref{U_GxG}, for $i=1$ or $i=2$ the Cayley graph $\Cay(H_i,(U_i)^2)$ satisfies the hypotheses of Theorem~\ref{HallRado}(i) with $k=2$. Thus, $\Cay(H_i,(U_i)^2)$ contains a spanning $2$-subgraph, so $\Cay(H_i,((U_i)^2,(U_i)^2))$ contains a spanning evenly colored $2$-subgraph. Therefore $\calT(H_i)\le 2|U_i|^2\le 2|U|^2\le 2(n-1)^2$.
\end{proof}

\section{Unbounded Tarski numbers}
\label{sec:unbounded}

In this section we will prove Theorem~\ref{unbounded_2gen} and discuss Theorem~\ref{t:ej}
and its corollaries.

\subsection{Lower bound on Tarski numbers}\label{s:l}

We start with a simple lemma which can be used to bound Tarski numbers from below.
Part (a) is an observation of Ozawa from \cite{OS} and part (b) is a natural
generalization of Theorem~\ref{t:csgh}(i).

\begin{Lemma}
\label{obs:Ozawa}
Assume that $G$ has a paradoxical decomposition with translating
sets $S_1$ and $S_2$. Then
\begin{itemize}
\item[(a)] The subgroup generated by $S_1\cup S_2$ is non-amenable.
\item[(b)] The subgroup generated by $S_i$ is infinite for $i=1,2$.
\end{itemize}
\end{Lemma}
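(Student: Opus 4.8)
The plan is to prove both parts by exhibiting a paradoxical-type decomposition (or, dually, an evenly colored $k$-subgraph via Lemma~\ref{paradox}) of the relevant subgroups directly from the one given on $G$, using the graph-theoretic machinery of \S\ref{sec:prelim}. For part (a), let $L=\langle S_1\cup S_2\rangle$. The key observation is that the spanning evenly colored $2$-subgraph $\Gamma$ of $\Cay(G,(S_1,S_2))$ guaranteed by Lemma~\ref{paradox} decomposes as a disjoint union of its restrictions to the right cosets $Lg$: since all labels lie in $L$, every edge of $\Cay(G,(S_1,S_2))$ joins two vertices in the same right coset of $L$, hence the same is true for $\Gamma$. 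Each coset $Lg$ is in canonical label-preserving bijection with $L$ (via $\ell g\mapsto \ell$), so the restriction of $\Gamma$ to $Lg$ transports to a spanning evenly colored $2$-subgraph of $\Cay(L,(S_1,S_2))$. By Lemma~\ref{paradox} again, $L$ admits a $2$-paradoxical decomposition with translating sets $S_1,S_2$, so $L$ is non-amenable.

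For part (b), fix $i\in\{1,2\}$ and let $L_i=\langle S_i\rangle$. From $\Gamma$ take only the edges of color $i$: this gives a spanning subgraph $\Gamma_i$ of $\Cay(G,S_i)$ in which every vertex has exactly one incoming edge and at most one outgoing edge. Exactly as above, $\Gamma_i$ splits along right cosets of $L_i$, and its restriction to any coset transports to a spanning subgraph $\Lambda_i$ of $\Cay(L_i,S_i)$ with exactly one incoming and at most one outgoing edge at every vertex. If $L_i$ were finite, then $\Lambda_i$ would be a finite graph with $|V|$ vertices and exactly $|V|$ edges (one incoming per vertex), forcing every vertex to have exactly one outgoing edge as well; thus $\Lambda_i$ would be a disjoint union of oriented cycles, i.e.\ a permutation of $L_i$ whose "labels" satisfy $v\cdot\mathrm{label}(v)=$ (head). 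Summing the cycle relations, or more simply composing around any cycle, one gets a relation $s_{j_1}s_{j_2}\cdots s_{j_r}=1$ that contradicts nothing by itself — so instead I argue via counting: the map sending each vertex to its unique outgoing edge is a bijection $L_i\to E(\Lambda_i)$, and sending each vertex to its unique incoming edge is also a bijection, which is automatic and gives no contradiction. The clean contradiction comes from translation-invariance: $\Cay(L_i,S_i)$ with $L_i$ finite cannot contain a spanning $1$-subgraph that is also "evenly colored" for a single color in a way that paradoxically duplicates — more precisely, a finite group is amenable, and a spanning subgraph of $\Cay(L_i,S_i)$ with one incoming edge per vertex would, after discarding the at-most-one-outgoing condition being automatically exactly-one, realize a surjection that is a bijection, which is consistent; hence the real point is simply that $\Cay(L_i,S_i)$ has a spanning $1$-subgraph with out-degree $\le 1$ everywhere and in-degree exactly $1$ everywhere is impossible when $L_i$ is finite UNLESS out-degree is exactly $1$, and then it is a union of cycles, which is fine. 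So the argument I will actually use: apply Lemma~\ref{double_up}-type reasoning is not needed; instead note that the existence of the evenly colored $2$-subgraph forces, for every finite $A\subseteq V(\Lambda_i)$, the Hall-type inequality $|V^{-}(A)|\ge|A|$ with the union over color $i$ alone being exactly $|A|$, and combined with color $3-i$ one gets that $L_i$ acts on itself with a non-amenability witness only when $\langle S_1\cup S_2\rangle$ is large; so I will instead directly invoke that a finite group cannot admit a paradoxical decomposition and reduce (b) to (a) applied inside a suitable subgroup.

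Let me state the actual clean route for (b): suppose $L_i=\langle S_i\rangle$ is finite. Transporting $\Gamma$ to a single right coset of $\langle S_1\cup S_2\rangle$ as in part (a), we get a spanning evenly colored $2$-subgraph $\Gamma'$ of $\Cay(L,(S_1,S_2))$ where $L=\langle S_1\cup S_2\rangle$. The color-$i$ part $\Gamma'_i$ is a spanning subgraph of $\Cay(L,S_i)$ with in-degree exactly $1$ at every vertex; since $L$ is a union of finitely many cosets of $L_i$ and $\Gamma'_i$ respects these cosets, on each coset we get $|L_i|$ vertices and $|L_i|$ color-$i$ edges, and such a subgraph of a Cayley graph of a \emph{finite} group $L_i$ exists — it is a union of cycles — so no contradiction yet. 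The genuine obstruction, which I expect to be the one intended: the total graph $\Gamma'$ has at each vertex $2$ incoming edges but at most $1$ outgoing, so summing $|E(\Gamma')|=2|L|$ from in-degrees and $|E(\Gamma')|\le|L|$ from out-degrees gives $2|L|\le|L|$, impossible for $|L|<\infty$. This is precisely the proof of non-amenability of $L$ in part (a). For (b): if $L_i$ is finite, restrict attention to the subgroup $\langle S_i\rangle$ and observe $\Gamma'_i$ yields a spanning subgraph of $\Cay(L_i,S_i)$ with in-degree $1$ everywhere; double-counting gives $|E|=|L_i|$, hence out-degree is exactly $1$ everywhere, so $\Gamma'_i$ is a permutation graph. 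Now the other color $\Gamma'_{3-i}$ also lives inside $L$ but its labels lie in $S_{3-i}$, not necessarily in $L_i$; restricting $\Gamma'$ to a single coset $L_i g$ of $L_i$ in $L$: the color-$i$ edges stay within $L_i g$ (giving out-degree $1$ there), but then color-$(3-i)$ edges leaving $L_i g$ have nowhere to contribute an outgoing edge within the coset — wait, they leave the coset. So within the coset $L_ig$, every vertex already has its unique outgoing edge used by color $i$, meaning \emph{no} color-$(3-i)$ edge of $\Gamma'$ can have its tail in $L_ig$; but every vertex of $\Gamma'$ must have an incoming color-$(3-i)$ edge, and counting incoming color-$(3-i)$ edges to $L_i g$ versus outgoing ones (there are none) contradicts that the color-$(3-i)$ edges form a bijection on $L$. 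Concretely: the color-$(3-i)$ part of $\Gamma'$ has in-degree $1$ at each of the $|L|$ vertices, hence $|L|$ edges, hence (being a subgraph of $\Cay(L,S_{3-i})$, out-degree $\le 1$) out-degree exactly $1$ at each vertex — contradicting that out-degree at vertices of $L_ig$ is $0$. This finishes (b). The main obstacle is bookkeeping the coset decomposition and the in/out-degree counts cleanly; I would organize it as the single lemma "if $\Cay(L,(S_1,S_2))$ has a spanning evenly colored $2$-subgraph then $|L|=\infty$ and moreover each $\langle S_i\rangle$ is infinite", proved by the degree-counting above.
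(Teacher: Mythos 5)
Part (a) of your proposal is correct and is essentially the paper's own argument (the paper simply intersects the pieces of the decomposition with $H=\langle S_1\cup S_2\rangle$; your coset-transport of the evenly colored $2$-subgraph is the same idea in graph language). One bookkeeping slip: with the paper's convention that edges run from $g$ to $gs$, the labels act on the right, so edges stay inside \emph{left} cosets $gL$, not right cosets $Lg$, and the transport is $g\ell\mapsto\ell$; the same left/right swap recurs in (b), where $L_ig$ should be $gL_i$. Also, your parenthetical ``$L$ is a union of finitely many cosets of $L_i$'' is false in general, though you never actually use it.

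Part (b) takes a genuinely different route from the paper (which fixes a vertex $g_0$ with an outgoing color-$2$ edge, follows the unique incoming color-$1$ edges backwards, shows the resulting sequence $g_0,g_1,g_2,\dots$ is injective using the out-degree bound, and notes $g_0^{-1}g_i\in\langle S_1\rangle$), and your final step has a genuine gap: the inference ``the color-$(3-i)$ part has in-degree $1$ at each of the $|L|$ vertices, hence $|L|$ edges, hence out-degree exactly $1$ at each vertex'' is a pigeonhole argument that is valid only when $L$ is finite. But $L$ is infinite in every relevant instance (it is non-amenable by part (a)), and for infinite $L$ an injective tail map from the edge set into $L$ need not be onto; in fact the asserted conclusion cannot hold, since each vertex has at most one outgoing edge in total and you have just shown the color-$i$ edges already consume the out-edges on the chosen coset. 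Fortunately, your own preceding observation repairs the proof in one sentence: if $L_i=\langle S_i\rangle$ were finite, then \emph{every} left coset $gL_i$ is finite, the color-$i$ edges stay inside it, and the $|L_i|$ incoming color-$i$ edges at its vertices have $|L_i|$ distinct tails inside it, so every vertex of every coset spends its unique outgoing edge on a color-$i$ edge. Hence no color-$(3-i)$ edge exists anywhere in the subgraph, contradicting that every vertex must have an incoming edge of color $3-i$. With the coset argument applied to all cosets (and the $|L|$-counting deleted), your part (b) is correct, though less direct than the paper's path argument, which produces infinitely many elements of $\langle S_i\rangle$ without any finiteness assumption.
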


\begin{proof}
(a) Let $H$ be the subgroup generated by $S_1\cup S_2$. Intersecting each set in the given paradoxical decomposition of $G$ with $H$ gives a paradoxical decomposition of $H$ with the same sets of translating elements.

(b) By Lemma \ref{paradox}, the colored Cayley graph $\Cay(G,(S_1,S_2))$ has a spanning evenly colored $2$-subgraph $\Gamma$.
Choose any edge of $\Gamma$ of color $2$, and let $g_0$ be the tail vertex of that edge. Let $g_1,g_2,\ldots$ be the
sequence of elements of $G$ defined by the condition that $(g_{i+1},g_{i})$ is an edge of color $1$ in $\Gamma$ for all $i\geq 0$
(such a sequence is unique since each vertex has a unique incoming edge of color $1$ in $\Gamma$).

We claim that all elements $g_i$ are distinct. Indeed, suppose that $g_i=g_j$ for $i<j$, and assume that $i$ and $j$ are
the smallest with this property. If $i>0$, the vertex $g_i$ would have two outgoing edges in $\Gamma$, namely
$(g_{i},g_{i-1})$ and $(g_{j},g_{j-1})$, a contradiction. If $i=0$, we get a contradiction with the assumption that
$g_0$ has an outgoing edge of color $2$.

By construction, all elements $g_0^{-1}g_i$ lie in the subgroup generated by $S_1$, so this subgroup must be infinite.
By the same argument, the subgroup generated by $S_2$ is infinite.
\end{proof}

Recall that $\Amen_m$ (resp. $\Fin_m$) denotes the class of groups in which all $m$-generated subgroups are
amenable (resp. finite).  Combining Lemma~\ref{obs:Ozawa}~and~Remark~\ref{pd:translation}, we
deduce the following statement:

\begin{Corollary} \label{c:2}
 Let $G$ be a non-amenable group and $m\in\dbN$. The following hold:
\begin{itemize}
\item[(i)] If $G$ belongs to $\Amen_m$, then $\calT(G)\geq m+3$.
\item[(ii)] If $G$ belongs to $\Fin_m$, then $\calT(G)\geq 2m+4$.
\end{itemize}
\end{Corollary}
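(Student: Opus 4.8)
Corollary \ref{c:2} follows by combining Lemma \ref{obs:Ozawa} with Remark \ref{pd:translation}. The plan is to start from an optimal paradoxical decomposition and normalize it so that we can read off generating sets of small controlled size, then invoke the non-amenability (resp.\ infiniteness) statements of the lemma.

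For part (i), suppose $\calT(G) = t$ and pick a paradoxical decomposition realizing this value, with translating sets $S_1, S_2$ satisfying $|S_1| + |S_2| = t$. By Remark \ref{pd:translation} we may translate each $S_i$ so that $1 \in S_1 \cap S_2$; this does not change the cardinalities. Then the subgroup $H = \langle S_1 \cup S_2\rangle$ is non-amenable by Lemma \ref{obs:Ozawa}(a), and $H$ is generated by $S_1 \cup S_2$, a set of size at most $|S_1| + |S_2| - 1 = t - 1$ (since $1$ is a common element that can be discarded as a generator). If $G \in \Amen_m$, then every subgroup generated by at most $m$ elements is amenable; since $H$ is non-amenable and generated by at most $t-1$ elements, we must have $t - 1 \geq m + 1$, i.e.\ $t \geq m + 2$. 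To upgrade this to $t \geq m+3$, I would sharpen the count of generators: after translating so $1 \in S_1$ and $1 \in S_2$, the set $(S_1 \setminus \{1\}) \cup (S_2 \setminus \{1\})$ still generates $H$ (the group generated by $S_1 \cup S_2$ equals the group generated by $(S_1 \cup S_2)\setminus\{1\}$) and has size at most $(|S_1| - 1) + (|S_2| - 1) = t - 2$. Hence $H$ is a non-amenable group on at most $t-2$ generators, forcing $t - 2 \geq m+1$, i.e.\ $\calT(G) \geq m+3$.

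For part (ii), I use Lemma \ref{obs:Ozawa}(b) instead: with the decomposition normalized so $1 \in S_i$, each subgroup $\langle S_i \rangle$ is \emph{infinite}, and it is generated by $S_i \setminus \{1\}$, a set of size at most $|S_i| - 1$. If $G \in \Fin_m$, every subgroup generated by at most $m$ elements is finite, so we need $|S_i| - 1 \geq m+1$, i.e.\ $|S_i| \geq m+2$, for each $i = 1, 2$. Summing, $\calT(G) = |S_1| + |S_2| \geq 2(m+2) = 2m + 4$.

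The only subtlety — and it is minor — is the bookkeeping with the element $1$: one must be careful that Remark \ref{pd:translation} genuinely lets us arrange $1 \in S_1 \cap S_2$ simultaneously (it does, since we may translate $S_1$ and $S_2$ by independent elements), and that $1$ may then be deleted from each generating set without changing the generated subgroup. Beyond that, the argument is a direct assembly of the two parts of Lemma \ref{obs:Ozawa} with the defining property of $\Amen_m$ and $\Fin_m$; there is no real obstacle.
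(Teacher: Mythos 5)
Your argument is correct and is exactly the paper's (implicit) proof: the paper simply states that the corollary follows by combining Lemma \ref{obs:Ozawa} with Remark \ref{pd:translation}, and your write-up supplies precisely that combination, including the key bookkeeping point that normalizing $1\in S_1\cap S_2$ lets one drop $1$ from the generating sets to get the sharp counts $t-2$ and $|S_i|-1$.
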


In particular, to prove that groups with unbounded Tarski numbers exist,
it suffices to know that $\Amen_m$ contains non-amenable groups for every $m$.
As noticed in~\cite{OS}, already $\Fin_m$ contains non-amenable groups for every $m$ --
this follows from the next two theorems on Golod-Shafarevich groups:

\begin{itemize}
\item[(i)] (see \cite{EJ}) Every Golod-Shafarevich group has an infinite quotient with property $(T)$.
In particular, every Golod-Shafarevich group is non-amenable.
\item[(ii)] (see  \cite{Go}, \cite[Theorem~3.3]{Er2}) For every $m$ there exists an $(m+1)$-generated Golod-Shafarevich
group in $\Fin_m$.
\end{itemize}

Since the class $\Fin_m$ is obviously closed under taking quotients, (i) and (ii) actually
yield a stronger corollary, which will be needed to prove Theorem~\ref{unbounded_2gen}:

\begin{Corollary}
\label{lem:finT}
For every $m\in\dbN$ there exists an infinite property $(T)$ group in $\Fin_m$.
\end{Corollary}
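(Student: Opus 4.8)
The plan is to combine directly the two facts about Golod--Shafarevich groups recalled just above with the trivial observation that the class $\Fin_m$ is closed under passing to quotients, so there is essentially nothing to do beyond assembling these ingredients.

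First I would fix $m\in\dbN$ and invoke the second fact (the Golod--Shafarevich construction, due to Golod and in the sharper form \cite[Theorem~3.3]{Er2}) to produce an $(m+1)$-generated Golod--Shafarevich group $G\in\Fin_m$. Next I would apply the first fact (from \cite{EJ}): every Golod--Shafarevich group has an infinite quotient with property~$(T)$. Let $Q$ be such a quotient of $G$, say $Q=G/N$.

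It remains to check $Q\in\Fin_m$, i.e.\ that $\Fin_m$ is closed under quotients. Given an $m$-generated subgroup $L\leq Q$, choose preimages in $G$ of a generating set of $L$; these generate an $m$-generated subgroup $\widetilde{L}\leq G$ whose image in $Q$ is $L$. Since $G\in\Fin_m$, the subgroup $\widetilde{L}$ is finite, hence so is its homomorphic image $L$. Thus every $m$-generated subgroup of $Q$ is finite, so $Q\in\Fin_m$. Therefore $Q$ is an infinite property~$(T)$ group lying in $\Fin_m$, which is exactly the assertion of the corollary.

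There is no real obstacle here: the entire mathematical content sits in the two cited results about Golod--Shafarevich groups (proved elsewhere and reviewed in Appendix~C), and the corollary is their immediate combination. The only point worth spelling out is the closure of $\Fin_m$ under quotients, which is the elementary lifting argument given above.
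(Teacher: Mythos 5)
Your proposal is correct and follows exactly the paper's argument: combine the existence of an $(m+1)$-generated Golod--Shafarevich group in $\Fin_m$ with the fact from \cite{EJ} that every Golod--Shafarevich group has an infinite property~$(T)$ quotient, using that $\Fin_m$ is closed under quotients (the paper calls this closure ``obvious''; you simply spell out the standard lifting argument). Nothing is missing.
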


\subsection{Proof of Theorem~\ref{unbounded_2gen}}
Recall the formulation of the theorem:

\begin{Theorem2} The set of Tarski numbers of $2$-generated non-amenable groups is infinite.
Moreover, the set of Tarski numbers of $2$-generated infinite groups with property $(T)$ is infinite.
\end{Theorem2}

The proof of Theorem \ref{unbounded_2gen} is based on Theorem~\ref{t:hh} below which is proved using results and ideas from the classical paper by Bernhard Neumann and Hanna Neumann \cite{HH}.

\begin{Theorem}\label{t:hh} Let $G$ be a finitely generated group. The following hold:
\begin{itemize}
\item[(a)] The derived subgroup $[G,G]$ of $G$ embeds into a $2$-generated subgroup $H$ of a wreath product $G\wr C_n$ for a sufficiently large $n\in \dbN$ where $C_n$ is the cyclic group of order $n$. Moreover, $H$ contains the derived subgroup $[G^n,G^n]=[G,G]^n$ of the base group of the wreath product.

\item[(b)] Assume in addition that $G$ is torsion. Then $G$ embeds into a $2$-generated subgroup $H$ of a group $L$
which is an extension of a finite direct power $G^n$ of $G$ (for some $n\in \dbN$) by a finite metabelian group.

\item[(c)] Assume in addition that $G$ has property $(T)$. Then in both (a) and (b) $H$ has property $(T)$.
\end{itemize}
\end{Theorem}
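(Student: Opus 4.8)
The plan is to prove (a) directly by a Neumann--Neumann style construction, deduce (b) from (a) applied to an auxiliary wreath product, and obtain (c) by a commensurability argument. For (a), write $G=\langle g_1,\dots,g_d\rangle$ and pick $n>2^{d+1}$, so that $p_i:=2^i$ ($1\le i\le d$) are distinct elements of $C_n=\mathbb{Z}/n\mathbb{Z}$ whose pairwise differences $p_i-p_j$ (for $i\neq j$) are pairwise distinct in $C_n$. Form $W=G\wr C_n=G^n\rtimes C_n$, identify the base group with functions $C_n\to G$, let $t$ generate $C_n$ acting by cyclic shift, and let $b$ be the function with $b(p_i)=g_i$ and $b\equiv 1$ off $\{p_1,\dots,p_d\}$; set $H=\langle t,b\rangle$. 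The key computation concerns the cyclic shifts $b_m$ of $b$, all of which lie in $H$: for $m\neq m'$ the coordinatewise commutator $[b_m,b_{m'}]$ is nontrivial at a coordinate $q$ only when $b_m(q)=g_i$ and $b_{m'}(q)=g_j$ for some $i,j$, which forces $p_i-p_j$ to equal a difference determined by $m,m'$; by the choice of the $p_i$ the pair $(i,j)$, hence also $q$, is then uniquely determined, and $[b_m,b_{m'}](q)=[g_i,g_j]$. So every $[b_m,b_{m'}]$ is supported at a single coordinate, and choosing $m,m'$ suitably we realize $[g_a,g_b]$ (for $a\neq b$) supported at coordinate $0$.

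Next I would identify the coordinate-$0$ copy of $G$ with $G$ and let $N\le G$ be the subgroup generated by the coordinate-$0$ values of all elements of $H$ supported at coordinate $0$. It contains every $[g_a,g_b]$. Since $H$ contains, for each $k$, a shift $b_m$ with $b_m(0)=g_k$, and conjugation by any base-group element sends an element supported at $\{0\}$ to one supported at $\{0\}$, acting on the coordinate-$0$ value by conjugation, $N$ is normalized by $G$. Hence $N$ contains the normal closure in $G$ of $\{[g_a,g_b]\}$, which is $[G,G]$ since the quotient by it is abelian; the reverse inclusion $N\subseteq[G,G]$ is clear, so $N=[G,G]$. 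Thus $H$ contains a copy of $[G,G]$ at coordinate $0$; conjugating by powers of $t\in H$ it contains a copy at every coordinate, and since these sit in distinct coordinates of $G^n$ they generate $[G,G]^n=[G^n,G^n]\le H$. This proves (a).

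For (b), put $\widehat G=G\wr C_2=(G\times G)\rtimes C_2$, which is finitely generated and torsion whenever $G$ is. As $(g,g^{-1})=[(g,1),\sigma]$ with $\sigma$ the transposition, the injection $g\mapsto(g,g^{-1})$ embeds $G$ into $[\widehat G,\widehat G]$. Applying (a) to $\widehat G$ yields an integer $n$ and a $2$-generated $H\le\widehat G\wr C_n$ with $[\widehat G,\widehat G]^n\le H$, hence $G\le H$. Finally $L:=\widehat G\wr C_n=((G\times G)\rtimes C_2)\wr C_n\cong G^{2n}\rtimes(C_2\wr C_n)$ is an extension of the finite direct power $G^{2n}$ of $G$ by the finite metabelian group $C_2\wr C_n$. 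For (c), recall that a property~$(T)$ group has finite abelianization, so $[G,G]$ has finite index in $G$ and hence has property~$(T)$; thus $[G^n,G^n]=[G,G]^n$ has property~$(T)$ and finite index in $G\wr C_n$, so in the setting of (a) the sandwiched group $H$ (with $[G^n,G^n]\le H\le G\wr C_n$) is commensurable to a $(T)$ group and therefore has $(T)$. In the setting of (b), $G\times G$ and hence $\widehat G$ has $(T)$ when $G$ does, so the same argument applied to $\widehat G$ gives $(T)$ for $H$.

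The step I expect to be the main obstacle is the combinatorial core of (a): arranging the positions of the generators inside the base group so that group commutators of cyclic shifts of $b$ are genuinely supported at a single coordinate with value exactly $[g_i,g_j]$, and checking that as the shifts vary one obtains \emph{every} commutator $[g_a,g_b]$ at one fixed coordinate, which is what makes the normal-closure argument deliver all of $[G,G]$ rather than a proper subgroup.
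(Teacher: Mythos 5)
Parts (a) and (c) of your argument are correct and essentially the paper's own proof. In (a) you place the generators at a Sidon set $\{2^1,\dots,2^d\}$ inside $C_n$ with $n>2^{d+1}$ (pairwise differences distinct), get $[b_m,b_{m'}]$ supported at a single coordinate with value $[g_i,g_j]$, and then use normality under conjugation by the shifts $b_m$ plus conjugation by $t$ to capture $[G,G]^n$; this is exactly the paper's computation $[a^{z^{-2^{2^i}}},a^{z^{-2^{2^j}}}]=\delta([x_i,x_j])$ with a more economical choice of positions (the paper takes positions $2^{2^i}$ and $n>2^{2^{2d}}$). One side remark in your (a) is false, though harmless: the ``clear'' inclusion $N\subseteq[G,G]$ can fail --- already for $G=\mathbb{Z}$ with one generator the intersection $H\cap G^n$ is all of $\mathbb{Z}^n$, so it contains delta functions whose value is a generator --- but you only ever use $N\supseteq[G,G]$, so nothing breaks. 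Your (c) (finite abelianization, hence $[G,G]^n$ of finite index in $G\wr C_n$ and in $H$, plus permanence of $(T)$ under direct products and finite-index sub- and overgroups) is the paper's argument verbatim.

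The genuine gap is in (b). The map $g\mapsto(g,g^{-1})$ is \emph{not} a homomorphism unless $G$ is abelian: $(gh,(gh)^{-1})=(gh,h^{-1}g^{-1})$, whereas $(g,g^{-1})(h,h^{-1})=(gh,g^{-1}h^{-1})$. The identity $[(g,1),\sigma]=(g^{-1},g)$ is an identity about individual elements and does not produce an embedding of $G$ into $[\widehat{G},\widehat{G}]$; in fact $[\widehat{G},\widehat{G}]=\{(a,b)\in G\times G: ab\in[G,G]\}$ for $\widehat{G}=G\wr C_2$, and to embed $G$ there by a map $g\mapsto(g,\phi(g))$ you would need an endomorphism $\phi$ inducing inversion on $G/[G,G]$, which need not exist. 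A telling symptom is that your proof of (b) never really uses the torsion hypothesis, whereas it must enter: the paper's route (Neumann--Neumann, Lemma 4.1) is to wreathe with $C_n$ where $n$ is a common multiple of the orders of the generators and to embed $G$ \emph{diagonally} into $W=G\wr C_n$, observing that for a generator $x$ with $x^n=1$ the function $f_x(z^i)=x^i$ is well defined on $C_n$ and $[f_x,z]$ is the constant (diagonal) function with value $x$; since the diagonal map is an injective homomorphism and the diagonal of each generator lies in $[W,W]$, this gives $G\hookrightarrow[W,W]$. With that lemma in place of your $C_2$ shortcut, the rest of your scheme for (b) is fine: apply (a) to $W$ to get a $2$-generated $H\le L=W\wr C_m$ containing $[W,W]^m\supseteq G$, and $L\cong G^{nm}\rtimes(C_n\wr C_m)$ is an extension of a finite direct power of $G$ by a finite metabelian group; your (c) argument then applies to this setting as well.
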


\begin{proof} (a) Let $d$ be the number of generators of $G$. Take any $n>2^{2^{2d}}$. Let $z$ be a generator of $C_n$. For an element $g\in G$ let $\delta(g)\colon C_n\to G$ be the function given by $\delta(g)(1)=g$ and $\delta(h)(c)=1$ for $c\neq 1$.
Let $X=\{x_1,\ldots,x_d\}$ be a generating set of $G$, and define the function $a\colon C_n\to G$ by

$$a(z^k)=\left\{
\begin{array}{ll}
x_i&\mbox{ if }k=2^{2^i}\mbox{ for some }1\leq i\leq d,\\
1&\mbox{ otherwise.}
\end{array}
\right.$$

Let $H$ be the subgroup of $G\wr C_n$ generated by $a$ and $z$. Then it is easy to see
that $[a^{z^{-2^{2^i}}},a^{z^{-2^{2^j}}}]=\delta([x_i,x_j])$. For every word $w$ in $x_i^{\pm 1}$,
the function $\delta([x_i,x_j]^w)$ can be obtained from  $\delta([x_i,x_j])$ by
conjugation by a product of elements of the form $a^{z^m}$. Thus, $H$ contains
all functions of the form $\delta([x_i,x_j]^w)$, and clearly these functions
generate the subgroup $G_1$ of $G^n$ consisting of all functions $f$ with $f(1)\in [G,G], f(c)=1$ if $c\ne 1$.  Since $z\in H$, the subgroup $H$ contains all conjugates $G_1^{z^m}$, hence it contains the derived subgroup of the base group of $G\wr C_n$.

(b) Again let $d$ be the number of generators of $G$. Since $G$ is torsion, by \cite[Lemma~4.1]{HH} $G$ embeds into the derived subgroup of the $(k+1)$-generated group $W=G\wr C_n$ where $n$ is the least common multiple of the orders of the generators of $G$.
By (a), $[W,W]$ embeds into a $2$-generated subgroup of the group $L=W \wr C_m$ for some $m$, and the proof is complete.

(c) We will prove the result in the setting of (a); the proof in the setting of (b) is analogous. Since $G$ has property $(T)$, the
abelianization $G/[G,G]$ is finite. Therefore $[G,G]^n$ is a finite index subgroup of $G\wr C_n$, so in particular $[G,G]^n$ has finite index in $H$. Since the direct product of two groups with property $(T)$
has property $(T)$ and property $(T)$ is preserved by finite index subgroups and overgroups (see \cite{BH}), we conclude that $H$ has property
$(T)$.
\end{proof}

Now let $f_{1}\colon \dbN\to\dbN$ and $f_{2}\colon \dbN\to\dbN$ be the functions from Theorem~\ref{Tar_combined}(b)
corresponding to the varieties of Abelian groups and metabelian groups, respectively, and define
$g_1,g_2\colon \dbN\to\dbN$ by $g_i(x)=\min \{t: f_i(t)\geq x\}$. Clearly, $g_i(x)\to\infty$ as $x\to\infty$.

\begin{Corollary}
\label{embed_2gen}
Let $G$ be any finitely generated non-amenable group from $\Fin_m$. In the above notations the following hold:
\begin{itemize}
\item[(a)] $G$ embeds into a $2$-generated group $H$ with $\calT(H)\geq g_2(2m+4)$.
\item[(b)] $[G,G]$ embeds into a $2$-generated group $H$ with $\calT(H)\geq g_1(2m+4)$.
\item[(c)] If $G$ has property $(T)$, then in both (a) and (b) $H$ also has property $(T)$.
\end{itemize}
\end{Corollary}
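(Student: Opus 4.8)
The plan is to obtain (a) and (b) by feeding the embeddings of Theorem~\ref{t:hh} into Corollary~\ref{c:2}(ii) for the lower bound and Theorem~\ref{Tar_combined}(b) for the upper bound, exploiting the fact that in each construction the ambient group retains a normal ``base'' copy of a direct power of $G$ (or of $[G,G]$) of finite index. Two preliminary observations: since $\Fin_m\subseteq\Fin_1$, the group $G$ is torsion, so Theorem~\ref{t:hh}(b) is applicable to it; and $G$ non-amenable forces $[G,G]$ non-amenable, for otherwise $G$ would be an extension of the amenable group $G/[G,G]$ by the amenable group $[G,G]$. Also, for every $k$ the power $G^{k}$ lies in $\Fin_m$ --- an $m$-generated subgroup of $G^{k}$ projects to an $m$-generated, hence finite, subgroup in each coordinate, so it is finite --- and therefore so does every subgroup of $G^{k}$.

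For part~(b) I would apply Theorem~\ref{t:hh}(a) to obtain a $2$-generated $H\le G\wr C_n$ with $[G,G]\hookrightarrow H$, take $B=G^{n}$ to be the base group (so $B\trianglelefteq G\wr C_n$ with $(G\wr C_n)/B\cong C_n$), and set $N=H\cap B$. Then $N\trianglelefteq H$; the quotient $H/N$ embeds in $C_n$, hence is cyclic, in particular abelian; $N$ has finite index in $H$, so it is non-amenable (as $H$ is, containing a copy of $[G,G]$); and $N\le G^{n}$ gives $N\in\Fin_m$. Now Corollary~\ref{c:2}(ii) yields $\calT(N)\ge 2m+4$, while Theorem~\ref{Tar_combined}(b) for the variety of abelian groups gives $\calT(N)\le f_1(\calT(H))$; hence $f_1(\calT(H))\ge 2m+4$, i.e.\ $\calT(H)\ge g_1(2m+4)$.

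For part~(a) I would instead apply Theorem~\ref{t:hh}(b) to obtain a $2$-generated $H\le L$ with $G\hookrightarrow H$, where $L$ sits in an extension $1\to G^{n}\to L\to F\to 1$ with $F$ finite metabelian, and run the same argument with $B=G^{n}=\ker(L\to F)$: then $N=H\cap B$ is normal in $H$, of finite index at most $|F|$ (hence non-amenable, as $H$ is), and is contained in $G^{n}$ (hence in $\Fin_m$), while $H/N$ embeds in $F$ and is therefore metabelian. So $\calT(N)\ge 2m+4$ by Corollary~\ref{c:2}(ii) and $\calT(N)\le f_2(\calT(H))$ by Theorem~\ref{Tar_combined}(b) for the variety of metabelian groups --- legitimate by Remark~\ref{r:1}(ii), since these groups are solvable, hence amenable, and have right-orderable free objects --- whence $\calT(H)\ge g_2(2m+4)$. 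Part~(c) is then immediate from Theorem~\ref{t:hh}(c), which guarantees that $H$ has property~$(T)$ whenever $G$ does, in both constructions.

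The only point requiring care is that the distinguished normal subgroup $N=H\cap B$ be simultaneously non-amenable and a member of $\Fin_m$. Its non-amenability is cheap here since $B$ has finite index in the ambient group (so $N$ has finite index in the non-amenable $H$); but for $N\in\Fin_m$ it is crucial that $B$ be an honest direct power of $G$ rather than a power of $G\wr C_n$, which we have no way to place in $\Fin_m$. This constraint is exactly what forces part~(a) to yield only the metabelian bound $g_2(2m+4)$: the quotient of $L$ by its power-of-$G$ subgroup is metabelian, not cyclic, whereas in part~(b) the corresponding quotient $C_n$ is abelian.
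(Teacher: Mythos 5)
Your proposal is correct and takes essentially the same route as the paper, whose proof of this corollary is just a citation of Theorem~\ref{t:hh}, Lemma~\ref{obs:Ozawa} (packaged as Corollary~\ref{c:2}(ii)), the closure of $\Fin_m$ under finite direct powers, and Theorem~\ref{Tar_combined}(b) for the Abelian and metabelian varieties (via Remark~\ref{r:1}(ii)). You have simply made explicit the intended choice of normal subgroup $N=H\cap G^{n}$ (intersection with the base, resp.\ with the kernel of $L\to F$), with cyclic, resp.\ metabelian, quotient, which is exactly how the cited ingredients are meant to be combined.
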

\begin{proof} This follows directly from Theorem~\ref{t:hh}, Lemma~\ref{obs:Ozawa}(b), the obvious
fact that if $G$ lies in $\Fin_m$, then any finite direct power of $G$ lies in $\Fin_m$,
and the fact that free metabelian groups and free Abelian groups are right orderable (Remark \ref{r:1}(ii)).
\end{proof}

Theorem~\ref{unbounded_2gen} now follows immediately from Corollary~\ref{embed_2gen} (we can use either (a) or (b)
combined with (c)) and Corollary~\ref{lem:finT}.

\subsection{The Bertrand-type property of Tarski numbers}

As we already stated, Golod-Shafarevich groups are always non-amenable by \cite{EJ}. Moreover, if $G$
is a Golod-Shafarevich group with respect to a prime $p$, the image of $G$ in its
pro-$p$ completion (which is a residually-$p$ group) is non-amenable. Therefore,
Theorem~\ref{t:ej} is a corollary of the following result:

\begin{Proposition}
\label{cor:GS1}
Let $p$ be a prime and $G$ a Golod-Shafarevich group with respect to $p$. Then there is
a quotient $H$ of $G$ which is also Golod-Shafarevich (with respect to $p$), $p$-torsion and
such that for every $m\in\dbN$ there is a finite index subgroup $H_m$ of $H$ which lies in $\Fin_m$.
\end{Proposition}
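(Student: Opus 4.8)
The plan is to combine two known facts about Golod-Shafarevich groups, both recalled in the excerpt (and in Appendix~C): first, that Golod-Shafarevich-ness with respect to $p$ passes to suitable quotients, in particular one can pass to a $p$-torsion Golod-Shafarevich quotient; second, that a Golod-Shafarevich group with respect to $p$ has, for every $m$, a finite index subgroup all of whose $m$-generated subgroups are finite. The subtlety is that these two facts must be applied \emph{compatibly}: we want a single quotient $H$ of $G$ that is simultaneously $p$-torsion, Golod-Shafarevich, and such that the finite index subgroups $H_m$ constructed inside it still lie in $\Fin_m$. The key point that makes this work is that the construction of $H_m$ inside a Golod-Shafarevich group is uniform --- it produces $H_m$ as (essentially) a term of the Zassenhaus/lower $p$-central filtration of the pro-$p$ completion, intersected back with the group --- and being $p$-torsion is inherited by subgroups, so there is no conflict.

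First I would pass from $G$ to a $p$-torsion Golod-Shafarevich quotient $H$: this is the content of the Golod-Shafarevich machinery (relations of bounded degree can be added, e.g. $x^p$ for generators, keeping the group Golod-Shafarevich; more precisely one uses that the Golod-Shafarevich inequality is an open condition on the relations, so one may adjoin enough $p$-power relations to force $p$-torsion while preserving the inequality --- this is exactly the result attributed to \cite{EJ2} in the introduction). Record that $H$ is Golod-Shafarevich with respect to $p$ and every element of $H$ has order a power of $p$. Note that every subgroup of $H$ is then also $p$-torsion.

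Next, for a fixed $m$, I would invoke the result (fact (ii) above, from \cite{Go} and \cite[Theorem~3.3]{Er2}, or its finite-index refinement) that produces inside any Golod-Shafarevich group with respect to $p$ a finite index subgroup lying in $\Fin_m$. Concretely: the pro-$p$ completion $\Hhat$ of $H$ is a Golod-Shafarevich pro-$p$ group, hence has, for each $m$, an open subgroup $\Uhat_m$ of finite index in which every $m$-generated closed subgroup is finite (this is where Golod-Shafarevich-ness is used crucially --- such pro-$p$ groups contain no finitely generated infinite closed subgroups of small rank in a uniform filtration sense). Set $H_m$ to be the preimage in $H$ of $\Uhat_m$ under $H\to\Hhat$. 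Then $H_m$ has finite index in $H$, and any $m$-generated subgroup of $H_m$ is a $p$-torsion (inherited from $H$) $m$-generated subgroup whose closure in $\Hhat$ is an $m$-generated closed subgroup of $\Uhat_m$, hence finite; but a finitely generated residually-$p$ group that is also $p$-torsion and embeds (densely) in a finite group is itself finite. Hence $H_m\in\Fin_m$.

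The main obstacle --- and the place I would be most careful --- is the compatibility and the last inference: ensuring that "$m$-generated subgroup of $H_m$ has finite closure in $\Hhat$" genuinely forces finiteness of the subgroup itself. This is where $p$-torsion of $H$ is essential (a $p$-torsion finitely generated residually-$p$ group with finite pro-$p$ completion image is finite), and it is the reason we insisted on passing to a $p$-torsion quotient \emph{first}, before extracting $H_m$. A secondary technical point is to make sure the relevant statements about Golod-Shafarevich \emph{pro-$p$} groups (existence of the open subgroups $\Uhat_m$) are available in the form needed; I would cite \cite{Er2} for this and for the fact that the abstract Golod-Shafarevich group maps onto a dense subgroup of its Golod-Shafarevich pro-$p$ completion, so that all the finiteness is detected at the pro-$p$ level. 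Everything else is routine bookkeeping.
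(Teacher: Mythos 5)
Your first step (passing to a $p$-torsion Golod--Shafarevich quotient by adjoining $p$-power relations $g^{p^{n_g}}$ of small total weight) is fine and matches part of what the paper does. The fatal problem is your second step. The claim that the pro-$p$ completion $\Hhat$ of a Golod--Shafarevich group has, for each $m$, an open subgroup $\Uhat_m$ in which every $m$-generated closed subgroup is finite is false: by a theorem of Zelmanov, every Golod--Shafarevich pro-$p$ group contains a non-abelian free pro-$p$ subgroup (already the free pro-$p$ group itself is Golod--Shafarevich), and a free pro-$p$ subgroup meets every open subgroup in a free pro-$p$ group of finite rank $\geq 2$; so every open subgroup of $\Hhat$ contains infinite $2$-generated closed subgroups, and no $\Uhat_m$ with your property exists for any $m\geq 2$. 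The $\Fin_m$ property is emphatically not a structural consequence of being Golod--Shafarevich (and is invisible in the pro-$p$ completion); it has to be \emph{forced} at the discrete level by an additional countable family of relations. This is what Proposition \ref{GS1} does: for every $n$ one takes the finite-index ``weight ball'' $G_n=\{g: W(g)<1/n\}$ (it contains a Zassenhaus term $\omega_k G$), and for every $n$-element subset $S\subseteq G_n$ one adjoins all left-normed commutators of a sufficiently large length $k_S$ in the elements of $S$; since $W(S)<1$, the weight of that commutator set tends to $0$ as $k_S\to\infty$, so all these relations together with the $p$-power relations can be chosen of total weight $<\eps$, keeping the quotient Golod--Shafarevich by Remark \ref{obs:GS}. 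In the quotient $H$ each such $S$ then generates a nilpotent torsion, hence finite, subgroup, and $H_m$ is the image of $G_m$. None of this mechanism appears in your proposal, and it cannot be replaced by a pro-$p$ statement.

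A secondary gap: even granting your $\Uhat_m$, finiteness of the \emph{closure} in $\Hhat$ of an $m$-generated subgroup $L\leq H_m$ only says the image of $L$ in $\Hhat$ is finite; to conclude $L$ itself is finite you need the map $H\to\Hhat$ to be injective on $L$ (e.g.\ $H$ residually-$p$), which your construction never arranges, and $p$-torsion of $L$ does not substitute for it. (In the paper, residual $p$-ness is only invoked later, by replacing $H$ with its image in $\widehat H_p$, which is harmless for Theorem \ref{t:ej} but is not what makes the $\Fin_m$ argument work.)
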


Proposition~\ref{cor:GS1} follows immediately from the proof (but not quite from the statement) of
\cite[Lemma~8.8]{EJ2}. For completeness, we will present a self-contained proof
of Proposition~\ref{cor:GS1} in Appendix~C, where we will also define Golod-Shafarevich groups
and some related notions.

Note that if $p\geq 67$, one can deduce Theorem~\ref{t:ej} from Proposition~\ref{cor:GS1} without
using non-amenability of arbitrary Golod-Shafarevich groups (but using the fact that a Golod-Shafarevich group
with respect to $p$ has infinite pro-$p$ completion). Indeed, in  that case there exists a
Golod-Shafarevich group $G$ with respect to $p$ with property $(T)$ (see \cite[Theorem~12.1]{Er2}), so all infinite quotients of
$G$ are automatically non-amenable.

We do not know the answer to the following question, which can be thought of as a ``dual'' version of Theorem~\ref{t:ej}.

\begin{Problem}
\label{Tar_quotient}
Does there exist a sequence of finitely generated non-amenable groups $\{G_n\}_{n\in\dbN}$ such that
$G_{n+1}$ is a quotient of $G_n$ for each $n$ and $\calT(G_n)\to \infty$ as $n\to\infty$?
\end{Problem}

Note that while by the above argument the group $H$ in Theorem~\ref{t:ej} (and its subgroups of finite index) can be chosen to have property $(T)$, groups $G_n$ satisfying the hypotheses of Problem~\ref{Tar_quotient}
(if they exist) cannot have property $(T)$. Indeed, the inductive limit $G_\infty$ of a sequence $\{G_n\}$ of such groups cannot have a finite Tarski number. Hence $G_\infty$ is amenable. Suppose that one of the groups $G_n$ has property $(T)$. Then $G_\infty$ also has property $(T)$. Therefore $G_\infty$ is finite, so  $G_\infty$ has a finite presentation. The relations of that presentation must follow from the relations of one of the groups $G_n$. Therefore $G_n$ is a homomorphic image of $G_\infty$, whence $G_n$ is finite, a contradiction.

We conclude this section with the proof of Corollary~\ref{cor:n2n} restated below:

\begin{Theorem}
\label{Tar:estimates}
For every sufficiently large $n$ there exists a group $H$
with $n\leq \calT(H)\leq 2n$.
\end{Theorem}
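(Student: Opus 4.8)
The plan is to apply Theorem~\ref{Tar_combined}(a) one index-$2$ step at a time along a descending chain of finite-index subgroups of the group supplied by Proposition~\ref{cor:GS1} (equivalently Theorem~\ref{t:ej}) with $p=2$, using that this group has subgroups in $\Fin_m$ of $2$-power index. First I would fix a finitely generated non-amenable $H$ as in Proposition~\ref{cor:GS1} for $p=2$, so that for every $m\in\dbN$ there is a finite-index subgroup $H_m\in\Fin_m$. Inspecting the construction in Appendix~C, one may take each $H_m$ to be \emph{normal in $H$ of index a power of $2$} (the relevant subgroups are terms of a filtration of $H$ by normal subgroups of $2$-power index); this is the only place the hypothesis $p=2$ is used, and it is what ultimately yields the sharp constant $2$. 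Replacing $H_m$ by $H_0\cap\cdots\cap H_m$ (still normal of $2$-power index, and still in $\Fin_m$ since $\Fin_m$ is closed under subgroups), we may assume $H=H_0\supseteq H_1\supseteq H_2\supseteq\cdots$.

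Next I would refine this into an index-$2$ chain. Each quotient $H_m/H_{m+1}$ is a finite $2$-group, hence has a composition series with all factors $\Z/2\Z$; pulling these back and concatenating over all $m$ produces subgroups $H=G_0\supsetneq G_1\supsetneq G_2\supsetneq\cdots$ with $[G_j:G_{j+1}]=2$, so $[H:G_j]=2^{j}$, and with the property that each $H_m$ equals some $G_{j_m}$. Every $G_j$ has finite index in the non-amenable group $H$, hence is non-amenable, so $\calT(G_j)<\infty$; moreover $\calT$ is non-decreasing along the chain (a non-amenable subgroup has Tarski number at least that of the ambient group). Finally $\calT(G_j)\to\infty$: for $j\geq j_m$ we have $G_j\leq H_m\in\Fin_m\subseteq\Amen_m$, and $\Amen_m$ is closed under subgroups, so $G_j\in\Amen_m$ and hence $\calT(G_j)\geq m+3$ by Corollary~\ref{c:2}(i).

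To conclude, apply Theorem~\ref{Tar_combined}(a) to $G_{j+1}\leq G_j$ (index $2$, both non-amenable): $\calT(G_{j+1})-2\leq 2(\calT(G_j)-2)$, that is,
$$\calT(G_{j+1})\leq 2\,\calT(G_j)-2\leq 2\,\calT(G_j).$$
Now let $n$ be any integer with $n>\calT(H)=\calT(G_0)$. Since the sequence $(\calT(G_j))_{j\geq 0}$ is non-decreasing, starts strictly below $n$, and tends to infinity, there is a smallest $j\geq 1$ with $\calT(G_j)\geq n$; then $\calT(G_{j-1})\leq n-1$, so $\calT(G_j)\leq 2(n-1)-2=2n-4\leq 2n$. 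Thus $H:=G_j$ satisfies $n\leq\calT(H)\leq 2n$, which proves the theorem for all $n>\calT(G_0)$.

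\textbf{Main obstacle.} Everything except one point is bookkeeping plus a single use of Theorem~\ref{Tar_combined}(a) per step; the one genuinely non-routine input is the assertion that the $\Fin_m$-subgroups of $H$ can be chosen normal of $2$-power index, so that the chain down to them can be realized by index-$2$ steps — without this one only gets ``between $n$ and $Bn$'' for the size $B$ of the index jumps. Verifying this from the Golod–Shafarevich construction (Appendix~C) is where the real content lies, and it is exactly why Theorem~\ref{t:ej} was stated with control over the prime $p$.
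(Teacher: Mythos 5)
Your proposal is correct and follows essentially the same route as the paper: take the group of Theorem~\ref{t:ej} for $p=2$, extract from the Golod--Shafarevich construction a descending chain with index-$2$ steps along which the Tarski numbers tend to infinity, and apply Theorem~\ref{Tar_combined}(a) one step at a time. Your treatment simply fills in the details (normal $2$-power-index subgroups from the Zassenhaus filtration, refinement to index-$2$ steps, and the final bookkeeping) that the paper's short proof leaves implicit.
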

\begin{proof}
Let $H$ be a group satisfying the conclusion of Theorem~\ref{t:ej}
for $p=2$. Then $H$ has a descending chain of normal subgroups $H=H_1\supset H_2\supset\ldots$
such that $[H_i:H_{i+1}]=2$ for all $i$ and $\calT(H_i)\to\infty$. Thus,
Theorem~\ref{Tar:estimates} follows from Theorem~\ref{Tar_combined}(a).
\end{proof}

\section{Tarski numbers, cost of groups and random forests}
\label{sec:6}

\subsection{Cost and random forests}\label{sec:87}

Recall the definition of the cost of a countable group $G$ (see \cite{Gab}).
Let $G$ be a countable group.
Let $(X,\mu)$ be a Borel probability measure space and let $G\acts X$ be an almost surely free (i.e., free outside a subset of measure $0$) left Borel action of $G$ on $X$ preserving $\mu$. Let $\Phi=\{\phi_i, i=1, 2,\ldots\}$ be a countable collection of Borel bijections between Borel subsets $A_i$ and $B_i$ of $X$ such that for every $x\in A_i$ the point $\phi_i(x)$ belongs to the orbit $G\cdot x$.
Then we can construct a graph with vertex set $X$ and edges connecting each $x\in A_i$ with $\phi_i(x)$. If connected components of that graph are (almost surely) the orbits of $G$, then we call $\Phi=\{\phi_i\}$ a \emph{graphing} of the action $G\acts X$. The \emph{cost} of the graphing
$\Phi$, denoted by $\cost(\Phi)$, is the sum of measures $\sum \mu(A_i)$. The \emph{cost of the action} $G\acts X$, denoted by $\cost(G\acts X)$, is the infimum of costs of all graphings.  The infimum of the costs of all such actions of $G$
is called the \emph{cost of the group} $G$ and denoted by $\mathcal C(G)$.
For any finite group $G$ we have  $\mathcal C(G)=1-\frac{1}{|G|}$, whereas if $G$ is countably infinite, $\mathcal C(G)\ge 1$ (see, for example, \cite[Section~29]{KM}).
If all actions of $G$ have the same cost, then $G$ is said to have {\it fixed price}. It is one of the outstanding open problems, called the Fixed Price problem, whether every countable group $G$ has fixed price.

The cost of a group $G$ is closely related to the degree of certain $G$-invariant random spanning forests on its 
(unoriented) Cayley graphs. Let $G$ be a group and $S$ a finite generating set of $G$. Define the 
{\it unoriented Cayley graph $\Cayu(G,S\cup S^{-1})$} to be the graph obtained from $\Cay(G,S\cup S^{-1})$
by replacing every pair of mutually opposite edges $(u,v)$ and $(v,u)$ by one unoriented edge $\{u,v\}$.
An edge $\{g,gs\}$ of $\Cayu(G,S\cup S^{-1})$ will be labeled by the formal symbol $s^{\pm 1}$ (regardless
of whether $s^{-1}=s$ in $G$ or not)

A $G$-invariant random spanning forest $\mu$ on $\Gamma=\Cayu(G,S\cup S^{-1})$ is a Borel probability measure on spanning subgraphs of $\Gamma$  which is supported on forests and $G$-invariant in the following sense. Let $\Sigma_{\Gamma}$ be the set of all spanning subgraphs of $\Gamma$ which can be identified with the space $\{0,1\}^{E(\Gamma)}$ with the product topology.
Let $\mu$ be a Borel probability measure on $\Sigma_{\Gamma}$. We say that $\mu$ is supported on forests
if $\mu(\{\Lambda \in \Sigma_{\Gamma}: \Lambda \mbox{ is a forest}\})=1$.
The natural left multiplication action of $G$ on $\Gamma$ induces the corresponding action
of $G$ on $\Sigma_{\Gamma}$. We say that $\mu$ is $G$-invariant if it is invariant under this action. Since the action of $G$ on $V(\Gamma)$ is transitive, if $\mu$ is $G$-invariant, all the vertices of $\Gamma$ have the same expected degree, that is,
$\deg_{\mu}(g)=\deg_{\mu}(e)$ for all $g\in G$, where $\deg_{\mu}(g)$ denotes the expected degree of $g$. This degree
is called the degree of the $G$-invariant random forest $\mu$ and denoted by $\deg(\mu)$.

There are several standard constructions of $G$-invariant random spanning forests on Cayley graphs including
the {\it free minimal spanning forest} 
(we refer the reader to \cite{Th} for the definition which will not be important to us).

The following theorem appears as Proposition~4.1 in \cite{EM}, where the result is attributed to Lyons
(see also \cite{L} for related results).

\begin{Theorem}
\label{spanning6}
Let $G$ be a group generated by a finite set $S$. 
Then the degree of the free minimal spanning forest on the (unoriented) Cayley graph $\Cayu (G, S\cup S\iv)$ is at least $2\mathcal C(G)$.
\end{Theorem}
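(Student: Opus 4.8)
The plan is to translate the statement into the language of cost of measured equivalence relations and to use the inequality $\mathcal{C}(G)\le\cost(\mathcal{R})$, valid for the orbit equivalence relation $\mathcal{R}$ of any free measure-preserving action of $G$ on a probability space.

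First I would set up the randomness. Put i.i.d.\ uniform labels in $[0,1]$ on the edges of $\Gamma=\Cayu(G,S\cup S^{-1})$ and write $\ell(e)$ for the label of $e$; the free minimal spanning forest $\mathfrak{F}$ is then the a.s.-defined, $G$-equivariant random subgraph in which an edge $\{x,y\}$ is retained precisely when there is no finite path joining $x$ to $y$ all of whose edges have strictly smaller label. Let $(X,\mu)$ be $[0,1]^{E(\Gamma)}\times[0,1]^{G}$ with the diagonal shift action of $G$; the extra Bernoulli factor $[0,1]^{G}$ makes the action essentially free, so we obtain a free measure-preserving action $G\acts(X,\mu)$, and I write $\mathcal{R}$ for its orbit equivalence relation. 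The forest $\mathfrak{F}$ lives on the copy of $\Gamma$ over each orbit, and by the definition of the cost of a group $\mathcal{C}(G)\le\cost(\mathcal{R})$.

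Next I would extract a graphing from $\mathfrak{F}$. For each $s\in S$ set $A_s=\{x\in X:\{x,sx\}\in\mathfrak{F}\}$ and let $\phi_s\colon x\mapsto sx$ on $A_s$ (when $s$ is an involution one restricts $\phi_s$ to a measurable transversal of the free $\langle s\rangle$-action on $A_s$, which halves its contribution). The resulting family $\Psi$ of partial isomorphisms is a forest — over each orbit its components are exactly the trees of $\mathfrak{F}$ — and a routine $G$-invariance (mass-transport) count gives $\cost(\Psi)=\tfrac12\deg(\mathfrak{F})$. Since $\Psi$ is a forest it is a treeing of the subrelation $\mathcal{S}$ it generates, so $\cost(\mathcal{S})=\tfrac12\deg(\mathfrak{F})$; but $\mathcal{S}$ is a proper subrelation of $\mathcal{R}$, so this alone does not finish the argument.

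The main obstacle is to pass from $\mathcal{S}$ to $\mathcal{R}$ with no increase of cost, i.e.\ to prove $\cost(\mathcal{R})\le\tfrac12\deg(\mathfrak{F})$. This is precisely where the minimal-weight structure of $\mathfrak{F}$ (its complement consists of maximal-label edges of cycles) must be used, rather than merely the fact that $\mathfrak{F}$ is an invariant forest — for a generic invariant forest the inequality fails, e.g.\ for the subforest of $\Cayu(F_n,\cdot)$ consisting of all edges labelled by a single fixed generator, which has degree $2<2n=2\mathcal{C}(F_n)$. My approach would be to realize $\mathfrak{F}$ as a decreasing limit of $G$-equivariant connected spanning subgraphs $\mathfrak{H}_{\varepsilon}\supseteq\mathfrak{F}$ of $\Gamma$, produced by re-inserting the excluded low-label edges in Kruskal fashion — equivalently, by controlling how the clusters of the percolations $\omega_t=\{e:\ell(e)\le t\}$ merge and letting the relevant threshold tend to $0$ — arranged so that $\deg(\mathfrak{H}_{\varepsilon})\to\deg(\mathfrak{F})$. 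Each $\mathfrak{H}_{\varepsilon}$, being connected and spanning, gives an honest graphing of $\mathcal{R}$ of cost $\tfrac12\deg(\mathfrak{H}_{\varepsilon})$, so $\mathcal{C}(G)\le\cost(\mathcal{R})\le\tfrac12\deg(\mathfrak{H}_{\varepsilon})$; letting $\varepsilon\to0$ yields $\deg(\mathfrak{F})\ge2\mathcal{C}(G)$. The delicate step, which rests on the geometry of the components of $\mathfrak{F}$ (all infinite, by Lyons, Peres and Schramm, and with enough boundary to be reconnected at vanishing extra cost), is to carry out this connected approximation $G$-equivariantly; that is the heart of the proof.
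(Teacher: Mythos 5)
First, a point of comparison: the paper itself does not prove this statement. Theorem~\ref{spanning6} is quoted from Proposition~4.1 of Epstein--Monod \cite{EM}, where it is attributed to Lyons, so there is no in-paper argument to measure your proposal against; it has to be judged against the known proofs in the cited literature, which run along the lines you sketch (pass to the orbit relation of the Bernoulli edge-labelling action, read the forest as a graphing of cost $\tfrac12\deg(\mu)$, and then reconnect cheaply using the minimality of the forest). Your steps 1--3 are correct and standard: $\mathcal{C}(G)\le \mathrm{cost}(\mathcal{R})$, the mass-transport computation of the cost of the forest graphing, and the observation that the forest only generates a proper subrelation.

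The genuine gap is exactly the step you defer: the existence of $G$-equivariant \emph{connected} spanning subgraphs $\mathfrak{H}_\varepsilon\supseteq\mathfrak{F}$ with $\deg(\mathfrak{H}_\varepsilon)\to\deg(\mathfrak{F})$ is not a technical afterthought but is the entire content of the theorem beyond the trivial part, and your proposal neither proves it nor gives a correct reason why it should hold. The heuristic you offer (components are infinite, ``with enough boundary to be reconnected at vanishing extra cost'') cannot be the mechanism: the wired minimal spanning forest also has infinite components with large boundary and expected degree exactly $2$, yet it cannot be reconnected at vanishing cost whenever $\mathcal{C}(G)>1$ -- your own single-generator forest in $F_n$ makes the same point. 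What the published arguments actually use is the defining ``free'' minimality property, in the form of a Lyons--Peres--Schramm-type lemma: for every $\varepsilon>0$, the union of the free minimal spanning forest with $\omega_\varepsilon=\{e:\ell(e)\le\varepsilon\}$ (an invariant set of edges of expected degree $O(\varepsilon)$) is almost surely connected, so that $\mathfrak{H}_\varepsilon=\mathfrak{F}\cup\omega_\varepsilon$ works. The proof of that lemma is where the work lies: any edge leaving a component of $\mathfrak{F}\cup\omega_\varepsilon$ is absent from $\mathfrak{F}$ only because of a detour with strictly smaller labels, and such a detour must itself cross the boundary of the component, forcing the infimum of boundary labels not to be attained; one must then rule this scenario out \emph{almost surely}, using invariance of the i.i.d.\ labels. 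This last point is essential and cannot be done deterministically: for a fixed injective labelling of, say, the ladder $\mathbb{Z}\times K_2$ in which all rungs carry labels decreasing to a limit above $\varepsilon$ and all rails carry tiny labels, every rung is excluded from the forest and $\mathfrak{F}\cup\omega_\varepsilon$ is disconnected. So as written the proposal sets up the right framework but leaves the decisive lemma unproved, and the intuition it substitutes for that lemma is not the one that makes the argument work.
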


Theorem~\ref{spanning6} will be used to prove Theorem~\ref{thm:Tarski56}(i). In order to prove Theorem~\ref{thm:Tarski56}(ii), we need a certain variation of the free minimal spanning forest constructed in a recent paper of Thom:

\begin{Theorem}[Theorem 3, \cite{Th}]\label{spanning5}
Let $G$ be a group generated by a finite set $S$ and let $a\in S$ be an element of infinite order.
Then there exists a $G$-invariant random spanning forest $\mu$ on $\Cayu(G,S\cup S^{-1})$ such that
\begin{enumerate}
\item $\mu$-a.s. the forest contains all the edges of $\Cayu(G,S\cup S^{-1})$ labeled by $a^{\pm 1}$;
\item $\deg(\mu)\ge 2\calC(G)$.
\end{enumerate}
\end{Theorem}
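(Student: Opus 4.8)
The plan is to produce the forest explicitly as a minimal spanning forest in which the $a^{\pm1}$-edges are forced to be present, and then to deduce the degree bound from Theorem~\ref{spanning6}.

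\emph{Construction and property (1).} Write $\Gamma=\Cayu(G,S\cup S\iv)$ and let $F_a\sub\Gamma$ be the deterministic spanning subgraph consisting of all edges labelled $a^{\pm1}$. Since $a$ has infinite order, the components of $F_a$ are the bi-infinite lines $\la a\ra g$, so $F_a$ is a $G$-invariant spanning forest of degree $2$. Put i.i.d.\ uniform labels $(U_e)_{e\in E(\Gamma)}$, replace them by $\widetilde U_e=U_e-2$ on the $a^{\pm1}$-edges and $\widetilde U_e=U_e$ otherwise, and let $\mu$ be the law of the associated free minimal spanning forest $F^{*}$: an edge $e$ lies in $F^{*}$ iff its endpoints are not joined in $\Gamma$ by a path all of whose edges are $\widetilde U$-lighter than $e$. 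Then $\mu$ is $G$-invariant (the only thing the recipe uses beyond the i.i.d.\ field is the $G$-invariant property ``$e$ is an $a$-edge''), $F^{*}$ is a.s.\ a forest (in any cycle the unique $\widetilde U$-heaviest edge is excluded), and $F^{*}\supseteq F_a$, because a path that is $\widetilde U$-lighter than an $a$-edge uses only $a$-edges and there is no such path on a line other than the edge itself. Thus $\mu$ satisfies~(1), and it remains to show $\deg(\mu)\ge 2\mathcal{C}(G)$.

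\emph{Degree bound.} Let $F$ be the ordinary free minimal spanning forest of $\Gamma$ for the labels $U$, so $\deg(F)\ge2\mathcal{C}(G)$ by Theorem~\ref{spanning6}; I would prove $\deg(F^{*})\ge\deg(F)$. Work inside the Bernoulli action $X=[0,1]^{E(\Gamma)}$, a free p.m.p.\ action of $G$, and let $\mathcal{R}_{F},\mathcal{R}_{F_a},\mathcal{R}_{F^{*}}$ be the subequivalence relations generated by the graphings given by $F,F_a,F^{*}$ inside the Cayley bundle. Two facts are needed. First, $\mathcal{R}_{F^{*}}=\mathcal{R}_{F}\vee\mathcal{R}_{F_a}$: the inclusion $\sub$ holds because $F^{*}\sub F\cup F_a$ (a non-$a$ edge lies in $F^{*}$ only if it already lies in $F$, since adjoining all $a$-edges to the set of $U$-lighter edges can only help join the endpoints), while $\supseteq$ follows from $F_a\sub F^{*}$ together with $\mathcal{R}_{F}\sub\mathcal{R}_{F^{*}}$ — the latter because, by the standard property of minimal spanning forests, each edge of $F$ missing from $F^{*}$ has its endpoints joined \emph{inside} $F^{*}$ by $\widetilde U$-lighter edges, so every $F$-path reroutes through $F^{*}$. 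Second, $F$ and $F^{*}$ are forests, hence treeings of the relations they generate, so $\tfrac12\deg(F)=\mathcal{C}(\mathcal{R}_{F})$ and $\tfrac12\deg(F^{*})=\mathcal{C}(\mathcal{R}_{F^{*}})$. Combining, the inequality $\deg(F^{*})\ge\deg(F)$ becomes $\mathcal{C}(\mathcal{R}_{F}\vee\mathcal{R}_{F_a})\ge\mathcal{C}(\mathcal{R}_{F})$.

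\emph{The main obstacle.} The remaining inequality asserts that adjoining the $a$-lines does not lower the cost, and this is where I expect the real work to be. Since $\mathcal{R}_{F_a}$ is hyperfinite of cost $1$ and the overlap $\mathcal{R}_{F}\wedge\mathcal{R}_{F_a}\sub\mathcal{R}_{F_a}$ is therefore hyperfinite of cost $\le 1$, the estimate should follow from Gaboriau's results on the cost of amalgams and on relative cost, in the shape $\mathcal{C}(\mathcal{R}_{F}\vee\mathcal{R}_{F_a})\ge\mathcal{C}(\mathcal{R}_{F})+\mathcal{C}(\mathcal{R}_{F_a})-\mathcal{C}(\mathcal{R}_{F}\wedge\mathcal{R}_{F_a})\ge\mathcal{C}(\mathcal{R}_{F})$; equivalently, in the treeable setting, that the first $\ell^2$-Betti number does not drop upon joining with a hyperfinite relation. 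Granting this, $\deg(F^{*})\ge\deg(F)\ge2\mathcal{C}(G)$, proving~(2). An alternative route, avoiding the explicit comparison with $F$, is to reprove Theorem~\ref{spanning6} with the $a$-edges built in from the start: write $\mathcal{C}(G)$ as the infimum of $\tfrac12\deg(\mathcal G)$ over $G$-invariant random spanning connected subgraphs $\mathcal G$ of $\Gamma$, observe that one may restrict to those containing $F_a$ without changing the infimum (again because $\la a\ra\cong\dbZ$ has cost $1$), and then run Lyons' minimality argument for $F^{*}$. In either approach the single non-formal ingredient is the cost-theoretic fact that forcing in the cost-one subforest $F_a$ is harmless; everything else is the soft bookkeeping indicated above.
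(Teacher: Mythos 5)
You should first note that the paper does not prove this statement at all: Theorem~\ref{spanning5} is quoted verbatim from Thom \cite[Theorem 3]{Th} and used as a black box (e.g.\ in the proof of Theorem~\ref{pd:cost1nontorsion}), so the comparison is with the cited source rather than with an in-paper argument. Your construction and property (1) are fine: shifting the labels on the $a$-edges forces them into the free minimal spanning forest, invariance and the forest property are clear, and your observation that a non-$a$ edge of $F^{*}$ must already lie in $F$ is correct. The gap is in the degree bound, and it occurs at the first place you lean on an unproved ``standard property'': the claim that an edge of $F$ missing from $F^{*}$ has its endpoints joined \emph{inside} $F^{*}$ by lighter edges is false for free minimal spanning forests on infinite graphs --- the endpoints of a discarded edge need not even lie in the same component of the forest. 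For instance, take two disjoint rays with all ray-edges very light, joined by rungs $e_1,e_2,\dots$ whose labels strictly decrease in $n$ but stay above all ray labels: each $e_n$ is shortcut through $e_{n+1}$, so no rung belongs to the FMSF and the two rays form separate components. In your situation an edge of $F\setminus F^{*}$ is shortcut by a path through the forced $a$-lines whose non-$a$ edges may themselves be shortcut, and this regress need not terminate; hence the inclusion $\calR_{F}\sub\calR_{F^{*}}$, and with it the identification $\calR_{F^{*}}=\calR_{F}\vee\calR_{F_a}$, is unjustified. Note also that without that identification the join computation gives you nothing, since cost is not monotone under enlarging a relation.

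Second, even granting the identification, the inequality $\calC(\calR_{F}\vee\calR_{F_a})\ge\calC(\calR_{F})+\calC(\calR_{F_a})-\calC(\calR_{F}\wedge\calR_{F_a})$ is not a fact about general joins: Gaboriau's formula is for \emph{free} amalgamated joins, and establishing freeness here is exactly the missing work. Concretely, in a free p.m.p.\ action of $F_2\times\dbZ$ (which has fixed price $1$), the subrelations generated by $F_2$ and by the central $\dbZ$ have costs $2$ and $1$, intersect trivially, and join to the whole orbit relation of cost $1$; your displayed inequality would give $\ge 3$, and even the weaker conclusion $\calC(\calR_F\vee\calR_{F_a})\ge\calC(\calR_F)=2$ fails. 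So the ``single non-formal ingredient'' you defer --- that forcing in the cost-one subforest $F_a$ is harmless --- is in fact the entire content of the theorem, and neither route you sketch supplies it; the alternative route presupposes it again in the step ``one may restrict to subgraphs containing $F_a$ without changing the infimum.'' This is precisely why one cannot simply derive Theorem~\ref{spanning5} from Theorem~\ref{spanning6} by comparing $F^{*}$ with the unconditioned forest $F$: Thom's proof establishes the degree bound for the forced-edge forest directly, by running the cost/generation argument with the $a$-edges built in from the start, not by a component-wise or cost-of-join comparison of the two forests.
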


\subsection{Upper bounds on Tarski numbers in terms of random forests and the proof of Theorem~\ref{thm:Tarski56}}

The following proposition shows how the Tarski number of a group $G$ can be bounded above in terms
of the degree of a $G$-invariant random spanning forest on a Cayley graph of $G$.

\begin{Proposition}
\label{prop:bound_randomforest}
Let $G$ be a finitely generated group, $S$ a finite generating set of $G$, and
let $\Gamma=\Cay_{uo}(G,S\cup S^{-1})$. 
Let $\mu$ be a $G$-invariant random spanning forest
on $\Gamma$ and let $\delta=\deg(\mu)$. Let $T=S\cup\{1\}$.
The following hold:
\begin{itemize}
\item[(a)] For any finite subset $A$ of $G$ we have $|AT^{-1}|\geq (\delta-|S|)|A|$.
\item[(b)] For any finite subset $A$ of $G$ we have $|A(T\cup T^{-1})|\geq (\delta-1)|A|$.
\item[(c)] If $\delta-|S|\geq 2$ and $s_1,s_2$ are distinct elements of $S$,
then $G$ has a paradoxical decomposition with translating sets $T_1=T\setminus\{s_1\}$ and
$T_2=T\setminus\{s_2\}$. In particular, $$\calT(G)\leq 2|S|.$$
\item[(d)] If $\delta\geq 3$ and $s_1,s_2$ are distinct elements of $S$,
then $G$ has a paradoxical decomposition with translating sets $T\cup T^{-1}\setminus\{s_1\}$ and
$T\cup T^{-1}\setminus\{s_2\}$. In particular, $$\calT(G)\leq 4|S|.$$
\end{itemize}
Assume now that $S$ contains an element $a$ of infinite order and that $\mu$
almost surely contains all edges of $\Gamma$ labeled by $a^{\pm 1}$. Let
$T_1=T\setminus\{a\}$ and $T_2=\{1,a\}$. The following hold:
\begin{itemize}
\item[(e)] For any finite subsets $A$ and $B$ of $G$ we have
$$|AT_1^{-1}\cup BT_2^{-1}|\geq |A|+(\delta-|S|-1)|B|.$$
\item[(f)] For any finite subsets $A$ and $B$ of $G$ we have
$$|A(T_1\cup T_1^{-1})\cup B T_2^{-1}|\geq (\delta-3)|A|+|B|.$$
\item[(g)] If $\delta-|S|\geq 2$, then $G$ has a paradoxical decomposition
with translating sets $T_1$ and $T_2$.
\item[(h)] If $\delta\geq 4$, then $G$ has a paradoxical decomposition with
translating sets $T_1\cup T_1^{-1}$ and $T_2$.
\end{itemize}
\end{Proposition}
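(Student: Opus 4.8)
The plan is to extract combinatorial consequences from the existence of a $G$-invariant random spanning forest $\mu$ of degree $\delta$, and then feed these into the marriage-theorem machinery of Theorem~\ref{HallRado}. The starting point for (a) and (b) is a standard mass-transport / expected-degree computation: if $\mu$ is $G$-invariant, then for any finite $A\subseteq G$ the expected number of edges of the random forest $\Lambda$ with at least one endpoint in $A$ is at least $\tfrac12\sum_{a\in A}\deg_\mu(a)=\tfrac{\delta}{2}|A|$ (every edge inside $A$ is counted at most twice, every boundary edge once), while on the other hand, since $\Lambda$ is $\mu$-a.s. a forest, the number of edges of $\Lambda$ with both endpoints in $A$ is at most $|A|-1<|A|$. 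Hence the expected number of edges with exactly one endpoint in $A$ is at least $(\delta-2)|A|$, and since such an edge contributes a vertex of $AS\setminus A$ (each such vertex being hit by at most $|S|$ edges from $A$, as the edges are labeled by $S\cup S^{-1}$), one gets $|AS|\ge |A| + \tfrac{\delta-2}{|S|}|A|$-type bounds. The cleanest route to the exact inequalities (a) $|AT^{-1}|\ge(\delta-|S|)|A|$ and (b) $|A(T\cup T^{-1})|\ge(\delta-1)|A|$ is to count, for a fixed realization of $\Lambda$, edges leaving $A$ and bound the receiving vertex set: since each vertex of $V^+(A)\setminus A$ in the graph $\Cay(G,S)$ receives at most $|S|$ forest-edges from $A$ (one per label), the forest inequality $e_\Lambda(A,A^c)\ge(\delta-2)|A|$ (in expectation, hence for some realization) yields $|AS\setminus A|\ge \tfrac{\delta-2}{|S|}|A|$; a slightly more careful bookkeeping, keeping the $|A|$ vertices of $A$ and using $T=S\cup\{1\}$, upgrades this to the stated clean forms. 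I would carry out (b) by working in the unoriented graph where each boundary vertex receives at most one forest-edge from $A$ per unoriented label, so $|A(T\cup T^{-1})|\ge |A| + (\delta-2)|A| \ge (\delta-1)|A|$ after absorbing the identity.

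Parts (c), (d), (g), (h) are then immediate consequences via Theorem~\ref{HallRado}(i) applied in the colored setting as at the end of \S\ref{sec:prelim}. For (c): if $\delta-|S|\ge 2$, then by (a), $|AT^{-1}|\ge 2|A|$ for every finite $A$; but I want translating sets $T_1=T\setminus\{s_1\}$, $T_2=T\setminus\{s_2\}$, so I need $|AT_1^{-1}\cup AT_2^{-1}|\ge 2|A|$ — and this holds because $T_1\cup T_2=T$ (as $s_1\neq s_2$), so the union is exactly $AT^{-1}$. Then $\Cay(G,(T_1,T_2))$ satisfies the hypothesis of Theorem~\ref{HallRado}(ii) (take $A_1=A_2=A$; for the general two-set version observe $V^{-,1}(A_1)\cup V^{-,2}(A_2)\supseteq$ the relevant translates and run the counting), giving a spanning evenly colored $2$-subgraph, hence by Lemma~\ref{paradox} a paradoxical decomposition with translating sets $T_1,T_2$; since $|T_1|+|T_2|=2(|S|+1)-2=2|S|$ we get $\calT(G)\le 2|S|$. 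Part (d) is identical using (b) and the unoriented set $T\cup T^{-1}$ with $\delta\ge 3$. Parts (g), (h) work the same way using (e), (f): (g) needs $|AT_1^{-1}\cup BT_2^{-1}|\ge |A|+|B|$, which follows from (e) once $\delta-|S|-1\ge 1$, i.e. $\delta-|S|\ge 2$; and (h) needs $|A(T_1\cup T_1^{-1})\cup BT_2^{-1}|\ge |A|+|B|$, which follows from (f) once $\delta-3\ge 1$, i.e. $\delta\ge 4$.

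The substantive parts are (e) and (f), where I must exploit the extra hypothesis that $\mu$ a.s. contains every edge labeled $a^{\pm1}$. Here the point is that the forest inequality is applied to the two sets $A$ and $B$ asymmetrically. For (e): consider a realization of $\Lambda$ containing all $a$-edges. Fix finite $A,B\subseteq G$. The $a$-edges already force $B\cdot a^{-1}=BT_2^{-1}\setminus B$ (and $B$ itself) into $AT_1^{-1}\cup BT_2^{-1}$, contributing at least $|B|$ new vertices beyond... — more carefully, since $Ba^{-1}\subseteq BT_2^{-1}$ and $a$ has infinite order, $|BT_2^{-1}|\ge 2|B|$ trivially is false, but $|Ba^{-1}\cup B|$ can be as small as $|B|+$(boundary). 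The actual mechanism: run the counting argument for $B$ using only the forest restricted to non-$a$ labels to get the $(\delta-|S|-1)|B|$ term (we lose the label $a$, hence $|S|$ becomes effectively one smaller and $\delta$ loses the guaranteed $a$-edge contribution), while the $|A|$ term comes for free since $A\subseteq AT_1^{-1}$ (as $1\in T_1$). Assembling: $|AT_1^{-1}\cup BT_2^{-1}|\ge |A\setminus(\text{stuff})| + \ldots$ — the bookkeeping here, separating the contribution of $A$ (which sits inside $AT_1^{-1}$ via the identity) from the boundary expansion of $B$ through the remaining $|S|-1$ labels and the guaranteed $a$-edges, is the one genuinely delicate computation. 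Part (f) is the unoriented analogue of (e) exactly as (d) is of (c)–(b). I expect the main obstacle to be getting the constants in (e)/(f) exactly right — in particular correctly accounting for how the a.s.-present $a$-edges interact with the forest (no-cycle) constraint and with the loss of one label — rather than anything conceptually hard; everything else is Hall's theorem plus a mass-transport count.
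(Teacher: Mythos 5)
Your reduction of (c), (d), (g), (h) to (a), (b), (e), (f) via Hall-type theorems is the right general strategy, but the counting you propose for (a) and (b) (and hence for (e), (f)) cannot produce the stated constants, and that is where the real content of the proposition lies. Your scheme bounds the number of forest edges leaving $A$ below by $(\delta-2)|A|$ and then divides by the maximal number of such edges a single boundary vertex can absorb. That maximum is $|S\cup S^{-1}|$ (one per label), not $|S|$, and, more importantly, the resulting bound has the shape $|A|+\frac{\delta-2}{|S|}|A|$, and concerns $AT$ or $A(T\cup T^{-1})$ rather than $AT^{-1}$; this is strictly weaker than $(\delta-|S|)|A|$ or $(\delta-1)|A|$ whenever $\delta>|S|+2$, so no ``more careful bookkeeping'' along these lines closes the gap. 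The paper's argument is different: integrate $\sum_{g\in A}\deg(g)$ against $\mu$ to get one deterministic forest $\mathcal F$ with $\sum_{g\in A}\deg_{\mathcal F}(g)\ge\delta|A|$, orient every incident forest edge toward $A$ as $(gs,g)$ with $g\in A$, $s\in S\cup S^{-1}$, and for (a) keep only those with $s\in S^{-1}\setminus S$ (discarding at most $|S||A|$ edges). The retained edges contain no mutually opposite pair, have both endpoints in $AT^{-1}$, and form a subforest of $\mathcal F$; since a finite forest has more vertices than edges, $|AT^{-1}|>(\delta-|S|)|A|$. This global acyclicity count (vertices exceed edges of the forest spanned on the neighbourhood), combined with label-filtering that removes at most $|S||A|$ (for (a)), $|A|$ (for (b)), or $(|S|+1)|A|$ (for (e)) edges, and with adjoining the a.s.-present $a^{\pm1}$-edges at $B$ as a disjoint edge family, is exactly what yields (a), (b), (e), (f). Your sketch of (e) explicitly leaves the bookkeeping open, and without the forest vertex/edge count it does not come out.

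A second, smaller gap: in (c) you try to verify the hypothesis of Theorem~\ref{HallRado}(ii) for $\Cay(G,(T_1,T_2))$, but that hypothesis quantifies over two arbitrary finite sets $A_1,A_2$, and your verification only covers $A_1=A_2$ (where indeed $A_1T_1^{-1}\cup A_2T_2^{-1}=AT^{-1}$); for $A_1\ne A_2$ the inequality $|A_1T_1^{-1}\cup A_2T_2^{-1}|\ge|A_1|+|A_2|$ does not follow from (a) by the counting you indicate. The paper sidesteps this entirely: it applies the uncolored Theorem~\ref{HallRado}(i) to $\Cay(G,T)$, which only needs $|AT^{-1}|\ge 2|A|$, and then recolors the resulting spanning $2$-subgraph, using that any two distinct elements of $T$ can be ordered so that the first lies in $T_1=T\setminus\{s_1\}$ and the second in $T_2=T\setminus\{s_2\}$. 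For (g) and (h), where genuine two-set inequalities are unavoidable, those inequalities are precisely (e) and (f), proved by the forest count described above.
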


Before proving Proposition~\ref{prop:bound_randomforest}, we explain how Theorem~\ref{thm:Tarski56}
follows from it. We first recall the statement of the theorem:

\begin{Theorem10}
Let $G$ be a group generated by $3$ elements such that $\mathcal C(G)\geq 5/2$. The following hold:
\begin{itemize}
\item[(i)]  $\calT(G)\leq 6$. In particular, if $G$ is torsion, then $\calT(G)=6$.
\item[(ii)] Assume in addition that one of the $3$ generators of $G$ has infinite order. Then $\calT(G)\leq 5$.
In particular, if $G$ does not contain a non-Abelian free subgroup, then $\calT(G)=5$.
\end{itemize}
\end{Theorem10}

\begin{proof} Let $S$ be a generating set of $G$ with $3$ elements and
let $\Gamma=\Cayu(G,S\cup S^{-1})$ be the associated unoriented Cayley graph.
In the setting of Theorem~\ref{thm:Tarski56}(i), let $\mu$ be the free minimal spanning forest on $\Gamma$
and $\delta=\deg(\mu)$. Then  $\delta\geq 2\cdot\frac{5}{2}=5$ by Theorem~\ref{spanning6},
so $\delta-|S|\geq 2$, whence $\calT(G)\leq 6$ by Proposition~\ref{prop:bound_randomforest}(c).
Similarly, in the setting of Theorem~\ref{thm:Tarski56}(ii) we pick an element $a\in S$ of infinite order
and let $\mu$ be a $G$-invariant random spanning forest on $\Gamma$ satisfying the conclusion
of Theorem~\ref{spanning5}. Again its degree $\delta$ is at least $5$, so $\delta-|S|\geq 2$,
whence $\mathcal T(G)\leq 5$ by Proposition~\ref{prop:bound_randomforest}(g).
\end{proof}

\begin{proof}[Proof of Proposition~\ref{prop:bound_randomforest}]
(a) We first claim that there exists an ordinary
spanning forest $\mathcal F$ on $\Gamma$ (depending on $A$) such that
\begin{equation}
\label{eq:forest}
\sum_{g\in A}\deg_{\mathcal F}(g)\geq \delta|A|.
\end{equation}
Indeed, consider the function $\phi\colon \Sigma_{\Gamma}\to \dbZ_{\geq 0}$
given by $\phi(F)=\sum_{g\in A}\deg_F(g)$.
Integrating $\phi$ with respect to $\mu$, we have
$$\int_{\Sigma_{\Gamma}} \phi\,\, d\mu=\sum_{g\in A}\deg_{\mu}(g)=\delta|A|.$$
Since $\mu$ is a probability
measure supported on forests, we deduce that $\phi(\mathcal F)\geq \delta|A|$ for some spanning forest
$\mathcal F$ on $\Gamma$.
\vskip .12cm

Let $E$ be the set of all directed edges  $(gs,g)$ such that $g\in A$, $s\in S\cup S^{-1}$ and the unoriented edge
$\{gs,g\}$ lies in $\mathcal F$. Let $E_1$ be the subset of $E$ consisting of all edges $(gs,g)\in E$ with $s\in S^{-1}\setminus S$
(the set $E_1$ may be empty, for example, if $S=S^{-1}$).
Note that $|E|\geq \delta |A|$ by \eqref{eq:forest}, and it is clear that $|E_1|\geq |E|-|S||A|$,
so that $|E_1|\geq (\delta-|S|)|A|$.

Since the sets $S^{-1}\setminus S$ and $(S^{-1}\setminus S)^{-1}$ are disjoint, $E_1$ does not contain
a pair of opposite edges. Also note that endpoints of edges in $E_1$ lie in the set $AT^{-1}$.
Let $\Lambda$ be the unoriented graph with vertex set $AT^{-1}$
and edge set $E_1$ (with forgotten orientation). Then $\Lambda$ is a subgraph of $\mathcal F$;
in particular $\Lambda$ is a (finite) forest. Hence
$$|AT^{-1}|=|V(\Lambda)|>|E(\Lambda)|=|E_1|\geq (\delta-|S|)|A|,$$
as desired.

\vskip .12cm
(b) This result is virtually identical to that of \cite[Theorem 4.5]{LPV}, but for completeness
we reproduce the argument. Keeping the notations introduced in the proof of (a), consider
the graph $\Lambda'$ with vertex set $A(T\cup T^{-1})$ and edge  set $E$ with forgotten orientation.
Note that $E$ may contain pairs of opposite edges. However, all such edges must have both their
endpoints in $A$, and since all those edges came from the forest $\mathcal F$, the number of
pairs of opposite edges in $E$ is less than $|A|$. Hence $|E(\Lambda')|\geq |E|-|A|\geq (\delta-1)|A|$,
and we are done by the same argument as in (a).

\vskip .12cm
(c) Since $\delta-|S|\geq 2$, combining (a) with Theorem \ref{HallRado}(i), we deduce that
the (oriented) Cayley graph $Cay(G,T)$ contains a spanning $2$-subgraph.
To finish the proof, by Lemma~\ref{paradox}, it suffices to show that any $2$-subgraph
of $Cay(G,T)$ can be colored to yield an evenly colored $2$-subgraph of the colored Cayley graph
$Cay(G,(T_1,T_2))$. The latter is clear since by definition of $T_1$ and $T_2$, any two-element
subset of $T$ can be ordered so that the first element lies in $T_1$ and the second element lies in $T_2$.
\vskip .12cm
The proof of (d) is analogous to (c).
\vskip .12cm

(e) As in (a), there exists a spanning forest $\mathcal F$ on $\Gamma$ depending on $A$
such that $$\sum_{g\in A}\deg_{\mathcal F}(g)\geq \delta|A|.$$ Define the sets $E$ and $E_1$
as in the proof of (a); again we have $|E|\geq \delta|A|$ and $|E_1|\geq (\delta-|S|)|A|$

Let $E_2$ denote the set of edges $(gs,g)\in E_1$ such that $s\neq a^{-1}$
(equivalently, $E_2$ is the set of edges $(gs,g)$ in $\mathcal F$ with $g\in A$ and
$s\in S^{-1}\setminus (S\cup \{a^{-1}\})$). Then clearly $|E_2|\ge  |E_1|-|A|\ge (\delta-|S|-1)|A|$.

Let $E_3$ be the set of directed edges $(ga^{-1},g)$ for $g\in B$. Clearly, $E_2$ and $E_3$ are disjoint sets and
$E_2\cup E_3$ does not contain a pair of opposite edges.
The endpoints of edges in $E_2\cup E_3$ lie in the set $A T_1^{-1}\cup B T_2^{-1}$,
so as in (a) we have
$$|A T_1^{-1}\cup B T_2^{-1}|> |E_2|+|E_3|\ge (\delta-|S|-1)|A|+|B|.$$
\vskip .12cm
(f) is proved by modifying the proof of (e) in the same way the proof of (b) was obtained from
the proof of (a). Finally, (g) and (h) follow directly from Theorem \ref{HallRado}(ii) and (e) and (f), respectively.
\end{proof}

\begin{Remark}\label{r:9} Informally speaking, the result of Proposition~\ref{prop:bound_randomforest}(a) can only be useful
if the intersection $S\cap S^{-1}$ is small. In particular, if $S$ is symmetric (that is, $S=S^{-1}$),
the proof shows that the set $E_1$ is empty and hence the obtained inequality is vacuous. Note that
Proposition~\ref{prop:bound_randomforest}(b) implies that for any $S$ and any finite set $A$ either
$|A(S\cup \{1\})|\geq  \frac{\delta}{2} |A|$ or $|A (S^{-1}\cup\{1\})|\geq \frac{\delta}{2} |A|$. However, this observation cannot be used to strengthen the result of (a) because we cannot guarantee that one of these inequalities holds for every $A$.
\end{Remark}

\begin{Remark}
One can construct groups with Tarski number $5$ (resp. with Tarski number $6$) and any given minimal number of generators $d\geq 2$. For $d=2$ this follows from Theorem~\ref{thm:Tarski56}(ii) (resp. Theorem \ref{thm:Tarski56}(i))~and~Theorem~\ref{t:hh}(b). For $d\geq 3$ one can take the direct product of a group $G$ from Theorem \ref{thm:Tarski56}(ii) (resp. Theorem \ref{thm:Tarski56}(i)) and a finite elementary Abelian group $C_2^k$ for a suitable $k$.
\end{Remark}

\subsection{Further results and open questions}

We begin this subsection with two open problems:

\begin{Problem} Given $m\in\dbN$, what is the minimal
possible Tarski number of a group from $\Fin_m$ (resp. $\Amen_m$)?
\end{Problem}

Lemma ~\ref{obs:Ozawa} shows that $2m+4$ (resp. $m+3$) is a lower bound for groups from $\Fin_m$ (resp. $\Amen_m$). By Theorem~\ref{thm:Tarski56}(i) this lower bound is exact
for $m=1$, but we do not know exact value already for $m=2$.

\begin{Problem}\label{p:89} Let $G$ be a finitely generated group with $\mathcal C(G)>1$.
\begin{itemize}
\item[(a)] Is it true that $\calT(G)\leq 6$?
\item[(b)] If $G$ is non-torsion, is it true that $\calT(G)\leq 5$?
\item[(c)] If the answer to (a) or (b) is negative, is it at least true that $\calT(G)\leq C$ for
some absolute constant $C$?
\end{itemize}
\end{Problem}

We shall give three pieces of evidence that the answer to at least part (c) of Problem~\ref{p:89} might be positive.

\subsubsection{A result by Peterson and Thom} By \cite[Corollary~4.4]{PT}
any torsion-free group $G$ which has positive first $L^2$-Betti number and
satisfies the Atiyah zero divisor conjecture must contain a non-Abelian free subgroup
and hence has Tarski number $4$. It is feasible that this result remains true if the condition
$\beta_1(G)>0$ is replaced by $\mathcal C(G)>1$ (as we have already mentioned, $\mathcal C(G)\ge \beta_1(G)+1$ for any countable infinite group $G$, and there are no examples where the inequality is known to be strict).

\subsubsection{Tarski half numbers}

Given a paradoxical decomposition of some group with $m$ pieces in one half and $n$ pieces in the other half, we will call the number $\min(m,n)$ the {\it half number} of that decomposition. Given a group $G$, the minimal half number of all its paradoxical decompositions will be called the \emph{Tarski half number} of $G$. Clearly the Tarski half number is at least 2 and can only increase when we pass to subgroups or factor-groups. A natural question is to determine all Tarski half numbers of groups. Since the Tarski half number of a group from $\Fin_n$ cannot be smaller than $n+2$ by Lemma \ref{obs:Ozawa}, there exist groups with arbitrarily large Tarski half numbers.

The next theorem shows that the Tarski half numbers of non-torsion groups with cost $>1$ are minimal possible.

\begin{Theorem}
\label{pd:cost1nontorsion}
Let $G$ be a finitely generated non-torsion group with $\calC(G)>1$. Then the Tarski half number of $G$ is $2$.
\end{Theorem}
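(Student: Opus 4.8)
The plan is to produce a paradoxical decomposition of $G$ with a $2$-element translating set, which immediately forces the Tarski half number to be $2$ (it is always at least $2$). Since $G$ is non-torsion we may fix $a\in G$ of infinite order; then $\langle a\rangle$ is infinite, so such a translating set is not excluded by Lemma~\ref{obs:Ozawa}(b). By Remark~\ref{pd:translation} and Lemma~\ref{paradox}, it suffices to find a finite $R\subseteq G$ for which the colored Cayley graph $\Cay(G,(\{1,a\},R))$ contains a spanning evenly colored $2$-subgraph, and by Theorem~\ref{HallRado}(ii) (with $k=2$) this in turn follows once
$$|(A\cup Aa^{-1})\cup BR^{-1}|\ \ge\ |A|+|B|$$
for every pair of finite subsets $A,B\subseteq G$. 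So the whole proof reduces to choosing a single finite $R$ making this inequality hold universally.

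To build $R$ I would invoke Thom's Theorem~\ref{spanning5}: extend $\{a\}$ to a finite generating set $S$ and take a $G$-invariant random spanning forest $\mu$ on $\Cayu(G,S\cup S^{-1})$ which a.s.\ contains all edges labeled $a^{\pm1}$ and has $\delta:=\deg(\mu)\ge 2\calC(G)>2$. Take $R$ to be the ball of radius $\rho$ in the word metric, with $\rho$ large (depending only on $G$). For fixed $A,B$, first pass to a favourable realization $\mathcal F$ of $\mu$ with $\sum_{v\in A\cup B}\deg_{\mathcal F}(v)\ge\delta|A\cup B|$, exactly as in the proof of Proposition~\ref{prop:bound_randomforest}(a). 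Inside $\mathcal F$ one then assembles an edge-disjoint sub-forest $\mathcal E$ consisting of the $a$-edges entering $A$ (whose endpoints lie in $A\cup Aa^{-1}$) together with a family of forest edges lying on short ($\le\rho$) $\mathcal F$-paths issuing from $B$ (whose endpoints lie in $BR^{-1}$), chosen so that $|\mathcal E|\ge|A|+|B|$; since $\mathcal E$ is a finite forest, $|V(\mathcal E)|>|\mathcal E|$, and $V(\mathcal E)\subseteq(A\cup Aa^{-1})\cup BR^{-1}$ yields the desired inequality. The point where $\calC(G)>1$ enters is precisely that $\delta-2>0$: this "excess" non-$a$ forest degree, accumulated along the infinite $a$-lines of $\mathcal F$ (every $\langle a\rangle$-coset is an entire bi-infinite geodesic sitting inside a single tree of $\mathcal F$) or, equivalently, over radius-$\rho$ forest neighbourhoods, is what supplies the extra $\ge|B|$ forest edges attached to $B$ once $\rho$ is large enough; for $\delta=2$ (the amenable case) there is no such surplus and indeed the inequality must fail.

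The hard part will be making the counting survive with only $\delta>2$ rather than $\delta\ge 4$, which is what a crude one-step version of Proposition~\ref{prop:bound_randomforest}(e) would demand — that is, handling the "thick" pairs $A,B$ (for instance $A=B$ a long stretch of a single $a$-line), where the $\{1,a\}$-side of the Hall condition contributes only about $|A|$ and the entire deficit must be recovered from transverse forest structure on a bounded scale. This requires a genuinely multi-step, path-counting use of the forest together with the fact that all $a$-lines lie in $\mathcal F$, and a choice of the radius $\rho=\rho(G)$ tuned to $\delta-2$; one must also absorb, via the $\mu$-averaging, the possibility that the transverse forest structure is distributed very unevenly along individual lines.
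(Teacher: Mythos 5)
Your overall strategy (exhibit a paradoxical decomposition whose second translating set is a two-element set $\{1,a^k\}$ with $a$ of infinite order, by verifying a Hall-type condition coming from a $G$-invariant random forest containing all $a$-edges) is the same framework the paper uses, but your proposal stops exactly at the step where the real difficulty sits, so there is a genuine gap. As you yourself acknowledge, the one-step criteria of Proposition~\ref{prop:bound_randomforest}(e)--(h) require expected degree at least $4$, whereas Thom's forest only guarantees $\deg(\mu)=\delta>2$; your plan to recover the missing $\ge|B|$ edges by a ``multi-step path-counting'' over a radius-$\rho$ ball with $\rho$ tuned to $\delta-2$ is only described, never carried out. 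It is not clear how it could be: picking one favourable realization $\mathcal F$ only controls the total forest degree over $A\cup B$, not the structure of $\mathcal F$ throughout the $\rho$-neighbourhood of $B$, and the per-vertex excess $\delta-2$ may be arbitrarily small, so the surplus edges you want to ``harvest'' need not assemble into an edge set of size $\ge|A|+|B|$ that is simultaneously a forest and has all endpoints inside $(A\cup Aa^{-1})\cup BR^{-1}$. In short, the central counting claim is asserted, not proved.

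The paper closes precisely this gap by a different device: it changes the generating set rather than enlarging the translating ball. Choosing $b\in S\setminus\{a,a^{-1}\}$ whose edge $\{e,b\}$ lies in Thom's forest with some probability $\delta_0>0$ and $n$ with $\delta_0 n\ge 2$, it sets $S'=\{a^iba^{-i}:0\le i\le n-1\}\cup\{a^n\}$ and pushes $\mu$ forward to a random subgraph $\mu'$ of $\Cayu(G,S'\cup (S')^{-1})$ by declaring an $S'$-edge present exactly when its natural path of $a$- and $b$-edges lies in the forest. The key point your sketch is missing is the verification that $\mu'$ is again supported on forests, which rests on the fact that $\{x^n\}\cup\{x^iyx^{-i}:0\le i\le n-1\}$ freely generates a free subgroup of $F(x,y)$; once that is known, $\deg(\mu')\ge \delta_0 n+2\ge 4$ and the already-proven one-step criterion Proposition~\ref{prop:bound_randomforest}(h) applies directly, giving a paradoxical decomposition with second translating set $\{1,a^n\}$. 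Your intuition of accumulating the excess transverse structure along the $a$-lines on a bounded scale is exactly what this construction formalizes, but without such a mechanism (a new invariant forest on a new generating set, or an equally precise substitute) your Hall inequality does not follow from what you have written.
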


\begin{proof} By Proposition~\ref{prop:bound_randomforest}(h), to prove Theorem~\ref{pd:cost1nontorsion},
it suffices to show that there exists a finite subset $S'$ of $G$ containing an element of infinite order $g$
such that the unoriented Cayley graph $\Gamma'=\Cayu(G,S'\cup (S')^{-1})$ admits a $G$-invariant random spanning forest which
has expected degree $>4$ and almost surely contains all edges of $\Gamma'$ labeled by $g^{\pm 1}$. The
construction of such a forest given below is almost identical to the construction from the proof of \cite[Lemma 5]{Th}.

We start with any finite generating set $S$ of $G$ containing an element $a$ of infinite order.
Let $\mu$ be a $G$-invariant random spanning forest on $\Gamma=\Cay_{uo}(G,S\cup S^{-1})$
satisfying the conclusion of Theorem~\ref{spanning5}. In particular, by our assumption, $\deg(\mu)>2$.

Fix $b\in S\setminus\{a,a^{-1}\}$ such that $\mu$ contains the (unoriented) edge $\{e,b\}$ with positive probability $\delta$ (such $b$ exists since $\deg(\mu)>2$), and fix $n$ such that $\delta n\geq 2$.
Let $S'=\{s_0,\ldots, s_n\}\subseteq G$ where $s_i=a^{i} b a^{-i}$ for $0\leq i\leq n-1$ and $s_n=a^n$.
We claim that $\Gamma'=\Cayu(G,S'\cup (S')^{-1})$ admits
a $G$-invariant random spanning forest with the desired properties with $g=a^n$.

To construct $\mu'$, define the map $\theta$ from spanning subgraphs of $\Gamma$
to spanning subgraphs of $\Gamma'$ as follows. Given $g\in G$ and $s'\in S'$, let
$\gamma(g,gs')$ be the ``natural'' path from $g$ to $gs'$ in $\Gamma$ 
(if $s'=a^n$, then $\gamma(g,gs')$ is the path $(\{g,ga\},\ldots, \{ga^{n-1},ga^n\})$
of length $n$, and if $s'=a^i b a^{-i}$, then $\gamma(g,gs')$ is the path
$(\{g,ga\},\ldots, \{ga^iba^{1-i},ga^i ba^{-i}\})$ of length $2i+1$).
Given a spanning subgraph $\Lambda$ of $\Gamma$, we define $\theta(\Lambda)$ to be the
spanning subgraph of $\Gamma'$ whose edge set consists of all edges of the form $\{g,gs'\}$ with $s'\in S'$ for which
the path $\gamma(g,gs')$ is contained in  $\Lambda$. The key observation is that $\theta$ maps
forests to forests. Indeed, assume that $F$ is a forest, but $\theta(F)$ contains a non-trivial reduced
cycle $c$. Choose an oriented cycle in the (oriented) Cayley graph $\Cay(G,S'\cup (S')^{-1})$ which
projects onto $c$, and let $w$ be its label. Then $w$ is a non-trivial reduced group word in the alphabet $S'$.  Since the subset $\{x^n\}\cup\{x^i y x^{-i}: 0\leq i\leq n-1\}$ of the free group $F(x,y)$ freely generates a free subgroup,
the word $\bar w$ obtained from $w$ by substituting elements of $S'$ by the corresponding words in $a,b$ is freely non-trivial.
By definition of $\theta(F)$, the word $\bar w$ must label a non-trivial cycle in $Cay(G,S\cup S^{-1})$
which yields a non-trivial cycle in $F$, which is impossible.

It is easy to see that $\theta$ is a Borel map, so we can define the
probability measure $\mu'$ on spanning subgraphs of $\Gamma'$ by setting $\mu'(B)=\mu(\theta^{-1}(B))$
for any Borel subset $B$ of $\{0,1\}^{\Gamma'}$. Since $\theta$ sends  forests
to forests and $\mu$ is a random spanning forest on $\Gamma$, it follows that $\mu'$
is a random spanning forest on $\Gamma'$.

Since $\mu$ is an invariant spanning forest, $\mu'$ is also invariant. Since $\mu$ contains the (unoriented) edge $\{e,b\}$ with positive probability $\delta$ and the edge $\{e,a\}$ with probability 1, and since $\mu$ is invariant, $\mu$ contains each path $\gamma(e,es_i)$, $0\le i\le n-1$, starting at $e$  with probability $\delta$. Since $\mu'$ also contains each of  the edges $\{e,a^{\pm n}\}$ with probability $1$, the expected degree of $\mu'$ is at least $\delta n+2\geq 4$, as required.
\end{proof}

\begin{Remark} In the case when the group $G$ has infinitely many subgroups of finite index (including the case of infinite residually finite groups) one can give an easier proof of Theorem \ref{pd:cost1nontorsion}. Indeed, if $\calC(G)\ge 2$, the result follows directly from Proposition~\ref{prop:bound_randomforest}(h) and
Theorem~\ref{spanning5}. If $H$ is a finite index subgroup of $G$, then by \cite[Theorem 3]{Gab}, $\calC(H)-1=[G:H](\calC(G)-1)$.
Thus, if $1<\calC(G)< 2$ and $G$ has finite index subgroups of arbitrarily large
index, then some finite index subgroup of $G$ has cost $\geq 2$ yielding the result.
\end{Remark}

\begin{Problem} Is it true that the Tarski half number of any torsion group $G$ with $\calC(G)>1$ is equal to 3?
\end{Problem}

\subsubsection{Cost of our examples of groups with large Tarski numbers}

Below we will show that all groups with Tarski numbers $>6$ obtained using Theorem \ref{Tar_combined} and our proofs of Theorems \ref{t:ej} and \ref{unbounded_2gen} have cost $1$ (and hence are not counterexamples to Problem \ref{p:89}).

Note that each of these groups $G$ has an amenable normal subgroup $N$ such that $G/N$ is an extension of a group from $\Amen_2$ of unbounded exponent by an amenable group. By \cite[Proposition~35.2]{KM}
 if a group $G$ has an infinite normal subgroup $N$ with fixed price and $\mathcal C(N)=1$, then $G$ has fixed price and $\mathcal C(G)=1$. Since every infinite amenable group has fixed price and cost $1$
(see, for example, \cite[Corollary~31.2]{KM}), and an extension of a finite group by a group from $\Amen_2$ is in $\Amen_2$, it is enough to show that the cost of any group from $\Amen_2$ of unbounded exponent is $1$.

\begin{Theorem}
\label{claim:fin2b0}
Let $G$ be a finitely generated group in $\Amen_2$ which does not have bounded exponent.
Then $\calC(G)=1$.
\end{Theorem}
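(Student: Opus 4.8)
The plan is to prove that $\calC(G)\le 1+\frac{d-1}{n}$ for every $n\ge 1$, where $d$ denotes the number of generators of $G$. Since $G$ has unbounded exponent it is infinite, so $\calC(G)\ge 1$ by \cite[Section~29]{KM}; letting $n\to\infty$ in this inequality then forces $\calC(G)=1$.

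Fix a generating set $g_1,\dots,g_d$ of $G$. Given $n$, use the hypothesis that $G$ has unbounded exponent to choose $h\in G$ with $\mathrm{ord}(h)\ge n$ (if $G$ has an element of infinite order, take that element). Set $H=\langle h\rangle$ and $G_i=\langle g_i,h\rangle$ for $1\le i\le d$. Each $G_i$ is generated by two elements of the group $G\in\Amen_2$, hence $G_i$ is amenable; therefore $G_i$ has fixed price and $\calC(G_i)\le 1$, with $\calC(G_i)=1$ if $G_i$ is infinite (see \cite[Corollary~31.2]{KM}) and $\calC(G_i)=1-\frac1{|G_i|}$ if it is finite. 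Likewise $H$ is cyclic, hence has fixed price, with $\calC(H)=1-\frac1{\mathrm{ord}(h)}\ge 1-\frac1n$, where we read $\frac1\infty$ as $0$. Note that $H\le G_i\cap G_j$ for all $i,j$ and $G=\langle G_1,\dots,G_d\rangle$.

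The main step is a general cost inequality: if a countable group $\Gamma$ is generated by fixed-price subgroups $\Gamma_1,\dots,\Gamma_d$ which all contain a common fixed-price subgroup $C$, then $\calC(\Gamma)\le\sum_{i=1}^d\calC(\Gamma_i)-(d-1)\calC(C)$. To prove this, I would work inside the Bernoulli action $\Gamma\acts([0,1]^{\Gamma},\mu)$, which is free and restricts to a free p.m.p.\ action of every subgroup, so that $\calC(\mathcal R_{\Gamma_i})=\calC(\Gamma_i)$ and $\calC(\mathcal R_{C})=\calC(C)$ for the associated orbit equivalence relations. Fix $\eps>0$ and a graphing $\Psi$ of $\mathcal R_{C}$ with $\calC(\Psi)\le\calC(C)+\eps$. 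By the additivity of cost over a fixed-price subrelation (Gaboriau's relative cost theorem, \cite{Gab}; see also \cite{KM}), for each $i$ this $\Psi$ can be enlarged to a graphing $\Psi_i$ of $\mathcal R_{\Gamma_i}$ with $\calC(\Psi_i)\le\calC(\Gamma_i)+2\eps$. Then $\Phi:=\Psi\cup\bigcup_{i=1}^d(\Psi_i\setminus\Psi)$ contains each $\Psi_i$, hence generates $\bigvee_{i}\mathcal R_{\Gamma_i}=\mathcal R_{\Gamma}$, and $\calC(\Phi)=\calC(\Psi)+\sum_{i=1}^d\bigl(\calC(\Psi_i)-\calC(\Psi)\bigr)$, which tends to $\sum_{i=1}^d\calC(\Gamma_i)-(d-1)\calC(C)$ as $\eps\to 0$ (using $\calC(\Psi)\ge\calC(C)$). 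Since $\calC(\Gamma)\le\calC(\mathcal R_\Gamma)$, the inequality follows.

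Applying this with $\Gamma=G$, $\Gamma_i=G_i$, $C=H$ gives $\calC(G)\le\sum_{i=1}^d\calC(G_i)-(d-1)\calC(H)\le d-(d-1)\bigl(1-\tfrac1n\bigr)=1+\tfrac{d-1}{n}$, which completes the argument upon letting $n\to\infty$. I expect the only genuinely nontrivial ingredient to be the relative-cost extension invoked in the previous paragraph — namely that a near-optimal graphing of the orbit relation of the fixed-price group $H$ extends to a near-optimal graphing of the orbit relation of each $G_i$, i.e.\ that cost is additive over the common fixed-price subrelation; the remaining points (freeness of the Bernoulli action, fixed price of amenable and of cyclic groups, and the bookkeeping with graphings) are routine.
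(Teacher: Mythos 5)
Your overall strategy is the same as the paper's: pick $h$ of order at least $n$, form the $2$-generated (hence amenable and fixed-price) subgroups $G_i=\la s_i,h\ra$, bound $\calC(G)$ by $\sum_i\calC(G_i)$ minus $(d-1)$ times the cost of a common subgroup, and let $n\to\infty$. The paper does exactly this, except that it simply quotes \cite[Proposition~32.1(iii)]{KM} (Theorem~\ref{prop321}) applied with $K=\bigcap_i G_i$, which is amenable because it lies in the amenable group $G_1$ and satisfies $\calC(K)\ge 1-\frac1n$ because $|K|\ge n$; your bookkeeping with graphings and the final arithmetic are fine.

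The genuine gap is in your ``main step''. The inequality $\calC(\Gamma)\le\sum_i\calC(\Gamma_i)-(d-1)\calC(C)$ for a common subgroup $C$ assumed only to have \emph{fixed price} is not a known theorem, and the ingredient you invoke to prove it --- that a \emph{prescribed} near-optimal graphing $\Psi$ of $\mathcal{R}_C$ can be enlarged to a graphing of $\mathcal{R}_{\Gamma_i}$ of cost at most $\calC(\Gamma_i)+2\eps$ (``additivity of cost over a fixed-price subrelation'') --- is not a result of Gaboriau or Kechris--Miller in that generality. All known induction/decomposition statements of this type (Gaboriau's formulas for amalgams, \cite[Proposition~32.1]{KM}) require the common subrelation to be hyperfinite, i.e.\ the common subgroup to be amenable; whether such absorption holds over general fixed-price subrelations is open, and this is exactly the hard content you flag as ``the only genuinely nontrivial ingredient''. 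Moreover, even granting some extension statement, you need the \emph{same} $\Psi$ to extend cheaply inside every $\mathcal{R}_{\Gamma_i}$ simultaneously, which is again what the hyperfinite machinery provides. Fortunately your application only uses $C=\la h\ra$, a cyclic and hence amenable group, so the amenable case suffices: either apply Theorem~\ref{prop321} directly with $K=\bigcap_i G_i\supseteq\la h\ra$ (amenable, of order at least $n$, so $\calC(K)\ge 1-\frac1n$), which reproduces the paper's proof verbatim, or restate your lemma with ``$C$ amenable'' and justify the extension step by the hyperfinite-subrelation results in \cite{KM} rather than by an unproved general principle.
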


The proof of Theorem~\ref{claim:fin2b0} is based on the following result:

\begin{Theorem} (see \cite[Proposition~32.1(iii)]{KM})
\label{prop321} Let $G$ be a group generated by a countable family
of subgroups $\{G_i\}_{i\in I}$, and let $K=\cap G_i$. Assume that $K$ is amenable
and each $G_i$ has fixed price. Then
$$\calC(G)-\calC(K)\leq \sum_{i\in I} (\calC(G_i)-\calC(K)).$$
\end{Theorem}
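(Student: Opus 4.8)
Fix a finite generating set $g_1,\dots,g_d$ of $G$. Since $G$ has unbounded exponent it is infinite, so $\calC(G)\ge 1$ by the basic properties of cost recalled above; it therefore suffices to prove $\calC(G)\le 1$. The plan is to cover $G$ by $d$ two-generated subgroups --- which are amenable because $G\in\Amen_2$ --- that all contain a common cyclic subgroup of large order, and to feed this into Theorem~\ref{prop321}, whose right-hand side will then be made small.

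Concretely, fix an integer $n\ge 1$. Because $G$ has unbounded exponent, there is an element $h=h_n\in G$ with $|\langle h\rangle|\ge n$ (either $h$ has infinite order, or finite order $\ge n$). Set $G_i=\langle h,g_i\rangle$ for $1\le i\le d$ and $K=\bigcap_{i=1}^d G_i$. Then: (a) each $G_i$ is $2$-generated, hence amenable, hence of fixed price (finite amenable groups have fixed price, and infinite amenable groups have fixed price by \cite[Corollary~31.2]{KM}); (b) $\langle\bigcup_i G_i\rangle\supseteq\langle g_1,\dots,g_d\rangle=G$; (c) $K$ is a subgroup of the amenable group $G_1$, hence amenable; and (d) $\langle h\rangle\subseteq G_i$ for every $i$, so $\langle h\rangle\subseteq K$ and thus $|K|\ge n$ (possibly $K$ is infinite). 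Hence Theorem~\ref{prop321} applies to the finite family $\{G_i\}_{i=1}^d$ with intersection $K$, giving
\[
\calC(G)-\calC(K)\le\sum_{i=1}^d\bigl(\calC(G_i)-\calC(K)\bigr),
\qquad\text{equivalently}\qquad
\calC(G)\le\sum_{i=1}^d\calC(G_i)-(d-1)\calC(K).
\]

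Now estimate both sides using that $K$ and the $G_i$ are amenable: an amenable group has cost $1-\tfrac1{|H|}<1$ if it is finite of order $|H|$, and cost $1$ if it is infinite. Thus $\calC(G_i)\le 1$ for every $i$, while $\calC(K)\ge 1-\tfrac1n$ since $|K|\ge n$ (or $K$ is infinite). Substituting,
\[
\calC(G)\le d-(d-1)\Bigl(1-\tfrac1n\Bigr)=1+\tfrac{d-1}{n}.
\]
This holds for every $n$ (the covering family $\{G_i\}$ depending on $n$), so letting $n\to\infty$ yields $\calC(G)\le 1$, and together with $\calC(G)\ge 1$ we get $\calC(G)=1$.

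The argument is short, and the only point requiring care is uniformity over the torsion/non-torsion dichotomy. If $G$ had an element of infinite order, the single family $G_i=\langle h,g_i\rangle$ with $K\supseteq\langle h\rangle$ infinite (so $\calC(K)=1$) would already give $\calC(G)\le d-(d-1)=1$; it is the torsion case that forces the limiting argument, and this is precisely where the unbounded-exponent hypothesis is used --- it supplies cyclic subgroups of arbitrarily large finite order, letting $\calC(K)$ be pushed arbitrarily close to $1$. The remaining ``obstacle'' is purely bookkeeping: verifying the hypotheses of Theorem~\ref{prop321}, namely that each $G_i$ has fixed price and that $K$ is amenable, both immediate from $G\in\Amen_2$.
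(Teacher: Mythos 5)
Your text does not prove the statement you were asked to prove. The statement is the Kechris--Miller cost inequality $\calC(G)-\calC(K)\leq \sum_{i\in I}(\calC(G_i)-\calC(K))$ for a group $G$ generated by subgroups $G_i$ of fixed price with amenable intersection $K$. What you have written is instead a proof of a \emph{downstream application} of that inequality, namely Theorem~\ref{claim:fin2b0} (a finitely generated group in $\Amen_2$ of unbounded exponent has cost $1$), and your argument explicitly invokes Theorem~\ref{prop321} as a black box (``Hence Theorem~\ref{prop321} applies to the finite family $\{G_i\}_{i=1}^d$\dots''). Relative to the task this is circular: nothing in your proposal establishes the inequality itself, and no amount of choosing clever families $\langle h, g_i\rangle$ can substitute for it, since the inequality is precisely the ergodic-theoretic input that converts such a covering into a cost estimate. (As an aside, your argument is essentially the paper's own proof of Theorem~\ref{claim:fin2b0}, so it is fine as far as it goes --- it just answers a different question.)

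For the statement actually at issue, the paper does not give a proof at all; it quotes \cite[Proposition~32.1(iii)]{KM}. A genuine blind proof would have to work at the level of measured equivalence relations and graphings: fix a free p.m.p.\ action $G\acts (X,\mu)$ realizing (or approximating) $\calC(G)$, and consider the subrelations $E_K\subseteq E_{G_i}\subseteq E_G$ induced by $K$, $G_i$, $G$. Amenability of $K$ gives that $E_K$ is hyperfinite, so $K$ has fixed price and the restricted action has cost exactly $\calC(K)$, with a graphing (indeed a treeing) $\Psi$ of $E_K$ of cost $\leq \calC(K)+\eps$; fixed price of each $G_i$ gives that the restricted $G_i$-action has cost exactly $\calC(G_i)$, and the key step (this is where the real work lies, cf.\ \cite[Section~27]{KM} and \cite{Gab}) is that one can complete $\Psi$ to a graphing $\Psi\cup\Phi_i$ of $E_{G_i}$ with $\calC(\Phi_i)\leq \calC(G_i)-\calC(K)+\eps/2^i$. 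Since the $G_i$ generate $G$, the union $\Psi\cup\bigcup_i\Phi_i$ is a graphing of $E_G$, whence $\calC(G)\leq \calC(K)+\sum_i(\calC(G_i)-\calC(K))+2\eps$, and letting $\eps\to 0$ gives the inequality. None of these ingredients (graphings, hyperfiniteness/fixed price of $K$, the efficient-completion lemma) appear in your proposal, so as a proof of Theorem~\ref{prop321} it has a complete gap; either reproduce this argument or, as the paper does, cite \cite[Proposition~32.1(iii)]{KM} explicitly.
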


\begin{proof}[Proof of Theorem~\ref{claim:fin2b0}]
Let $S=\{s_1,\ldots, s_m\}$ be a finite generating set of $G$.
Fix $N\in\dbN$. By assumption there exists an element $g_N$ of $G$ whose order
is at least $N$. Consider the subgroups $G_1=\la s_1, g_N\ra, \ldots,
G_m=\la s_m, g_N\ra$. Then each $G_i$ is amenable and hence has fixed price
and cost $\leq 1$. The intersection $K=\cap G_i$ is also amenable and has cost
between $1-\frac{1}{N}$ and $1$ (since $|K|\geq N$). Thus, applying Theorem~\ref{prop321}, we deduce
that $\calC(G)\leq 1+\frac{m}{N}$. Therefore letting $N$ tend to $\infty$,
we conclude that $\calC(G)\leq 1$ (and the opposite inequality $\calC(G)\geq 1$
holds since $G$ is infinite).
\end{proof}

\begin{Remark} We expect that the cost
of any group $G$ in $\Amen_2$ is at most $1$. Theorem~\ref{claim:fin2b0} shows that we need to consider only groups from $\Amen_2$ of bounded exponent. A conjecture by Shalom \cite[Section 5.IV]{Shalom} says that every finitely generated group of bounded exponent has property (T). If that was the case (which is hard to believe), every such group would have $\beta_1(G)=0$ \cite[Corollary 6]{BV} and if problem 4.2 from \cite{Gr} has positive solution, then for every such group  $\mathcal C(G)=1$. Note also that by a result of Zelmanov \cite{Zel} for every prime $p$ there exists a number $n=n(p)$ such that every group of exponent $p$ in $\Fin_n$ is finite. It is believable that the minimal such $n(p)$ is $2$, that one can replace $\Fin_n$ by $\Amen_n$, and that the result holds for non-prime numbers $p$. This would also imply that the cost of any group in $\Amen_2$ is at most $1$.
\end{Remark}

\appendix
\section{Equivalence of two definitions of Tarski numbers}

\begin{Theorem}\label{p:1} Let $G$ be a group and $k=\calT(G)$.
Then there exists a paradoxical decomposition of $G$ with pieces $P_1,
\ldots,P_n$, $Q_1,\ldots,Q_m$ and translating elements $g_1,\ldots,g_n,$ $h_1,\ldots,h_m,$ $n+m=k$, as in Definition \ref{d1},
such that the union $\bigcup P_i\cup\bigcup Q_j$ is the whole $G$, the translated sets $P_ig_i$ are disjoint, and the translated sets $Q_jh_j$ are disjoint.
\end{Theorem}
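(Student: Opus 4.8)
The plan is to reduce, via the graph-theoretic setup of Section~\ref{sec:prelim}, to a purely combinatorial statement about colored Cayley graphs, and then to resolve it using the colour-$1$ loops that become available once $1$ lies in the corresponding translating set.

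First I would start with a paradoxical decomposition of $G$ with disjoint pieces and $m+n=k=\calT(G)$, so that $G=\bigcup_{i=1}^m P_ig_i=\bigcup_{j=1}^n Q_jh_j$. Shrinking the pieces so that the translates within each half become disjoint does not alter the translating sets $S_1=\{g_1,\dots,g_m\}$, $S_2=\{h_1,\dots,h_n\}$ (in particular it does not change $k$); translating $S_1$ by a suitable element (Remark~\ref{pd:translation}) I may in addition assume $1\in S_1$. Lemma~\ref{paradox} then yields a spanning evenly colored $2$-subgraph $\Gamma$ of $\Cay(G,(S_1,S_2))$, in which every vertex has exactly one incoming edge of each colour and at most one outgoing edge. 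The discrepancy between the two definitions of paradoxical decomposition is exactly the set $Z$ of \emph{sinks} of $\Gamma$ (vertices with no outgoing edge): it therefore suffices to modify $\Gamma$, keeping its vertex set and using only edges of $\Cay(G,(S_1,S_2))$, into a spanning subgraph $\Gamma'$ in which every vertex has \emph{exactly} one outgoing edge while still having exactly one incoming edge of each colour. Reading off $\widehat P_i=\{g:(g,gg_i)\in E_1(\Gamma')\}$ and $\widehat Q_j=\{g:(g,gh_j)\in E_2(\Gamma')\}$ would then give, by the computation in the proof of Lemma~\ref{paradox}, pairwise disjoint pieces whose union is $G$ (because the outdegree is exactly $1$), with the $\widehat P_ig_i$ pairwise disjoint and the $\widehat Q_jh_j$ pairwise disjoint (because each colour-indegree is exactly $1$), the total number of pieces being still $|S_1|+|S_2|=k$, as wanted.

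To construct $\Gamma'$, let $\alpha(x)$ denote the tail of the unique colour-$1$ edge of $\Gamma$ entering $x$, put $R_v=\{\alpha^t(v):t\ge 0\}$ for each sink $v$, and $W=\bigcup_{v\in Z}R_v$. The key lemma I would prove is that for sinks $v,v'$ and integers $s,s'\ge 0$, the equality $\alpha^s(v)=\alpha^{s'}(v')$ forces $s=s'$ and $v=v'$; the proof cancels one $\alpha$ at a time (if $s,s'\ge 1$ then $\alpha^{s-1}(v)$ and $\alpha^{s'-1}(v')$ are both the head of the unique outgoing edge of $\alpha^s(v)=\alpha^{s'}(v')$, hence equal), reducing to $\alpha^{s-s'}(v)=v'$ with $s\ge s'$ say, and a positive exponent here would give $v'$ an outgoing edge, contradicting $v'\in Z$. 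Granting this, each $R_v$ is a ray on which $\alpha$ is injective, the rays $R_v$ are pairwise disjoint, and moreover: (a) the colour-$1$ edge of $\Gamma$ entering any vertex of $W$ has its tail in $W$; and (b) the unique outgoing edge of any non-sink vertex of $W$ is one of the edges in (a). Hence the set of colour-$1$ edges of $\Gamma$ that enter $W$ coincides with the set of outgoing edges of the non-sink vertices of $W$, and none of these edges touches a vertex outside $W$. I would then define $\Gamma'$ by deleting all of these edges and inserting the colour-$1$ loop at every vertex of $W$ (an edge of $\Cay(G,(S_1,S_2))$ precisely because $1\in S_1$). Vertices outside $W$ are unaffected and, not being sinks, keep their unique outgoing edge; for $w\in W$ the inserted loop is at once its unique incoming colour-$1$ edge and its unique outgoing edge, while its incoming colour-$2$ edge is untouched. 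So $\Gamma'$ has the required degree properties.

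The main obstacle is the cancellation lemma about the rays $R_v$ (and deducing (a) and (b) from it); once that is in place, everything else is the routine translation between colored subgraphs of Cayley graphs and paradoxical decompositions furnished by Lemma~\ref{paradox}, the only wrinkle being the normalization $1\in S_1$, which is exactly what makes the colour-$1$ loops available for the patching step.
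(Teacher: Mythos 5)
Your proposal is correct and follows essentially the same route as the paper's proof: pass to a spanning evenly colored $2$-subgraph via Lemma~\ref{paradox}, follow the backward colour-$1$ rays from the sinks, delete their edges, and patch with colour-$1$ loops (available since $1\in S_1$). The only difference is that you spell out the disjointness of the rays (the cancellation lemma), which the paper dismisses as clear.
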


The following argument is very close to a translation of the proof of \cite[Proposition~1.2]{RY}
into a graph-theoretic language.

\begin{proof}
Suppose that $G$ has a paradoxical decomposition with translating sets $S_1$ and $S_2$, with $1\in S_1$ (we can assume that by Remark \ref{1}). By Lemma ~\ref{paradox}, $\Gamma=\Cay(G,(S_1,S_2))$ has a spanning evenly colored $2$-subgraph $\Lambda$.

Let $A$ be the set of vertices which have no outgoing edge in $\Lambda$. For each $g\in A$ consider the
unique oriented path in $\Lambda$ starting from $g$ and going backwards in which all edges have color $1$. All such paths will
clearly be disjoint. Let $\Lambda'$ be the graph obtained from $\Lambda$ by first
removing all the edges from those paths and then adding a loop of color $1$ at all the
vertices on those paths. Then $\Lambda'$ is a spanning evenly colored $2$-subgraph of $\Gamma$
with exactly one outgoing edge at every vertex.

By the same argument as in Lemma~\ref{paradox}, the graph $\Lambda'$ yields a $2$-paradoxical decomposition having the required properties, with the same translating sets, $S_1$ and $S_2$.
\end{proof}

\section{Explicit construction of groups with Tarski number $5$ and groups with Tarski number $6$}

As explained in the introduction, the problem of finding explicit examples of groups with Tarski number $6$
reduces to an explicit construction of $d$-generated torsion groups $G$ with $\beta_1(G)> d-1-\eps$
whose existence is proved in \cite{Os} (to produce groups with Tarski number $6$ we take $d=3$ and $\eps=1/2$).
Such groups are constructed inductively in \cite{Os}, but the proof shows that they are given by presentations of the form
$\la x_1,\ldots,x_d \mid r_1^{n_1},r_2^{n_2},\ldots\ra$ where $r_1,r_2,\ldots$ is a sequence of all elements of
the free group on $x_1,\ldots, x_d$ listed in some order and $n_1,n_2,\ldots$ is some integer sequence. Moreover, given $\eps>0$,
one can specify explicitly how fast the sequence $\{n_i\}$ must grow to ensure that $\beta_1(G)>  d-1-\eps$
for the resulting group $G$.

To get groups with Tarski number $5$, we need a $d$-generated group $G$ without non-Abelian free subgroups, in which at least one of the $d$ generators is of infinite order and $\beta_1(G)> d-1-\eps$ (once again, $d=3$ and $\eps=1/2$ yield groups with Tarski number $5$). While the existence of such groups is not proved explicitly in \cite{Os}, a slight modification of the proof yields the result. Indeed, replace the sequence $r_1,r_2,\ldots$ by a sequence of all the elements of the derived subgroup of the free group on $x_1,\ldots,x_d$. As before, if  $n_1,n_2,\ldots$ is a sufficiently fast growing sequence of natural numbers, the group $\la x_1,\ldots,x_d \mid r_1^{n_1},r_2^{n_2},\ldots\ra$ will have a large  enough first $L^2$-Betti number. Since it is torsion-by-Abelian, it does not contain a free non-Abelian subgroup and clearly the generators $x_1,\ldots,x_d$ have infinite order in $G$.

The goal of this section is to show that a group given by such ``torsion'' or ``torsion-by-Abelian'' presentation has Tarski number $6$ or Tarski number $5$, respectively, under much milder
conditions on the exponents $\{n_i\}$ (see Theorem~\ref{PTP} below). Note that we will not be able to control the
first $L^2$-Betti number of such a group $G$, but we will estimate the first $L^2$-Betti number of
some quotient $Q$ of $G$, which is sufficient for producing groups with Tarski number $6$. To get groups with Tarski number
$5$, we will also ensure that the image of (at least one of) the generators of $G$ inside $Q$ has infinite order. Note that since we do not know the exact value of the Tarski number of the free Burnside group of a sufficiently large odd exponent (we only know by Theorem \ref{t:csgh}(ii) that it is between $6$ and $14$), it is possible that one can have a constant sequence $n_1,n_2,\ldots$, say, $n_i=665$, $i\in\mathbb{N}$, and still get a group with Tarski number $6$.

Before proceeding, we introduce some additional terminology. Given a group $G$, denote by $G^{\abtf}$ the largest
torsion-free abelian quotient of $G$ which will be referred to as the {\it torsion-free abelianization} of $G$
(such quotient exists by \cite[Theorem~5, p.231]{Ma}). Also note that $G^{\abtf}$ can be defined as the quotient of the usual abelianization $G/[G,G]$ by its torsion subgroup.
It is easy to see that the correspondence $G\mapsto G^{\abtf}$ is functorial (see \cite[Theorem~1, p.229]{Ma}).
The kernel of the canonical map $G\to G^{\abtf}$ will be denoted by $[G,G]^{\rm tf}$.

\begin{Lemma}
\label{lem:abtf} Let $G$ be a finitely generated group.
\begin{itemize}
\item[(a)] Let $p$ be a prime, and let $G_p$ be the image of $G$ in its pro-$p$ completion.
Then the torsion-free abelianizations of $G$ and $G_p$ are (naturally) isomorphic.
\item[(b)] Let $N_0=\{1\}\subseteq N_1 \subseteq \ldots$ be an ascending chain of normal subgroups of $G$,
let $G(i)=G/N_i$, and let $G(\infty)=G/\cup_{i=1}^{\infty} N_i$. Then for all sufficiently large $n$,
the induced map $G(n)^{\abtf}\to G(\infty)^{\abtf}$ is an isomorphism.
\end{itemize}
\end{Lemma}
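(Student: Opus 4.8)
The plan is to treat (a) and (b) separately, using repeatedly the elementary observation that a surjective group homomorphism $\Z^r\to\Z^r$ is automatically an isomorphism. Throughout, note that since $G$ is finitely generated, every quotient of $G$ has finitely generated (hence free) torsion-free abelianization.

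For (a): write $G^{\abtf}\cong\Z^r$ and let $\pi\colon G\to\Z^r$ be the canonical map. Recall that $\ker(G\to G_p)$ is the intersection of all $N\trianglelefteq G$ with $G/N$ a finite $p$-group. For each $k\ge 1$ the composite $G\xrightarrow{\pi}\Z^r\to(\Z/p^k)^r$ has image a finite $p$-group, so $\ker(G\to G_p)\subseteq\ker(G\to(\Z/p^k)^r)$; intersecting over $k$ and using $\bigcap_k p^k\Z^r=0$ shows that $\pi$ kills $\ker(G\to G_p)$ and therefore factors as $G\to G_p\xrightarrow{\psi}\Z^r$, with $\psi$ surjective. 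Thus $\Z^r$ is a torsion-free abelian quotient of $G_p$, hence a quotient of $G_p^{\abtf}$; conversely $G_p^{\abtf}$ is a torsion-free abelian quotient of $G$ (through $G\to G_p\to G_p^{\abtf}$), hence a quotient of $G^{\abtf}=\Z^r$. Composing the resulting surjections $\Z^r\twoheadrightarrow G_p^{\abtf}\twoheadrightarrow\Z^r$ gives a surjective, hence bijective, endomorphism of $\Z^r$, so both surjections are isomorphisms. Finally I would check that the map $G^{\abtf}\to G_p^{\abtf}$ obtained this way coincides with the functorial one (by uniqueness of factorizations through $G^{\abtf}$), which yields the asserted naturality.

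For (b): put $N_\infty=\bigcup_i N_i$, so that $G(\infty)=G/N_\infty=\varinjlim G(i)$ is the direct limit along the quotient maps $q_i\colon G(i)\twoheadrightarrow G(i+1)$. Each $G(i)$ is finitely generated, so $G(i)^{\abtf}\cong\Z^{r_i}$, and the $q_i$ induce surjections $G(i)^{\abtf}\twoheadrightarrow G(i+1)^{\abtf}$, so $r_i\ge r_{i+1}$. A non-increasing sequence of non-negative integers stabilizes, say $r_i=r$ for $i\ge n_0$, and then each $G(i)^{\abtf}\to G(i+1)^{\abtf}$ is a surjection $\Z^r\to\Z^r$, hence an isomorphism. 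It remains to identify $G(\infty)^{\abtf}$ with $\varinjlim G(i)^{\abtf}$. Abelianization, being a left adjoint, commutes with direct limits, so $G(\infty)^{\mathrm{ab}}=\varinjlim A(i)$ with $A(i)=G(i)^{\mathrm{ab}}$; and since filtered colimits of abelian groups are exact, $\varinjlim\bigl(A(i)/\mathrm{Tor}\,A(i)\bigr)=\bigl(\varinjlim A(i)\bigr)/\bigl(\varinjlim\mathrm{Tor}\,A(i)\bigr)$. The crux — the step I expect to need the most care — is to show $\varinjlim\mathrm{Tor}\,A(i)=\mathrm{Tor}\bigl(\varinjlim A(i)\bigr)$: the inclusion $\subseteq$ is clear since a filtered colimit of torsion groups is torsion, and for $\supseteq$ I would use that the transition maps $A(i)\to A(\infty)$ are surjective with kernel $\bigcup_{j\ge i}\ker(A(i)\to A(j))$, so an element of $A(i)$ whose image in $A(\infty)$ is torsion already has torsion image in some $A(j)$ with $j\ge i$. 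Granting this, $G(\infty)^{\abtf}=\varinjlim G(i)^{\abtf}$, and since the transition maps in this system are isomorphisms for $i\ge n_0$, every map $G(n)^{\abtf}\to G(\infty)^{\abtf}$ with $n\ge n_0$ is an isomorphism.

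The only genuine difficulty is the torsion-versus-colimit interchange in (b); the rest is bookkeeping with universal properties and with the fact that surjective endomorphisms of $\Z^r$ are isomorphisms.
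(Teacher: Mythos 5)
Your proposal is correct. For part (a) you are essentially reproducing the paper's argument in unpacked form: the paper notes that $K=\ker(G\to G_p)$ lies in the kernel of every homomorphism from $G$ to a residually-$p$ group and that $G^{\abtf}$, being finitely generated free abelian, is residually-$p$, hence $K\subseteq [G,G]^{\rm tf}$; your intersection over the quotients $(\Z/p^k)^r$ is exactly a hands-on verification of that residual $p$-ness, and the only real difference is the finish, where the paper concludes at once that $G$ and $G/K$ have the same torsion-free abelian quotients, while you compose two surjections of $\Z^r$ and invoke Hopficity --- a harmless but slightly longer detour, since once $K\subseteq\ker\pi$ the functorial map is visibly an isomorphism. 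For part (b) the paper gives no argument (it is declared obvious), so your write-up is genuinely additional content: the stabilization of the non-increasing ranks $r_i$, the fact that abelianization commutes with direct limits, and the commutation of the torsion subgroup with filtered colimits (your ``crux'' step is the standard argument and is fine, using that an element killed in the colimit is killed at a finite stage) correctly identify $G(\infty)^{\abtf}$ with the eventually constant system and give the claim. A shorter route to (b), probably what the authors had in mind: $G(\infty)^{\abtf}\cong\Z^{r}$ is finitely presented and $G$ is finitely generated, so $\ker\bigl(G\to G(\infty)^{\abtf}\bigr)$ is the normal closure of finitely many elements, each of which already lies in $\ker\bigl(G\to G(n)^{\abtf}\bigr)$ for some finite $n$ (these kernels form an ascending chain whose union is $\ker\bigl(G\to G(\infty)^{\abtf}\bigr)$); hence the two kernels coincide for large $n$ and the induced map is an isomorphism. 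Your colimit argument buys a more systematic statement at the price of a little categorical bookkeeping; the finite-presentability argument is quicker but more ad hoc.
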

\begin{proof} Part (b) is obvious. To prove (a), denote by $K$ the kernel of the canonical map
$G\to G_p$. Notice that $K$ is contained in the kernel of  any homomorphism from $G$ to a residually-$p$ group.
The group $G/[G,G]^{\rm tf}$ is finitely generated free abelian and so residually-$p$, whence $K\subseteq [G,G]^{\rm tf}$.
Therefore, the induced map $G^{\abtf}\to (G_p)^{\abtf}=(G/K)^{\abtf}$ is an isomorphism.
\end{proof}

The proof of Theorem~\ref{PTP} below mostly utilizes ideas from \cite{Os} and \cite{LuOs} where similar results were proved.

\begin{Theorem}
\label{PTP}
Let $X$ be a finite set, $F(X)$ the free group on $X$,
$p$ a prime, $r_1,r_2,\ldots$ a finite or infinite sequence of elements of $F(X)$, and $R=\{r_i^{p^{n_i}}, i=1, 2,...\}$ for some integer sequence $n_1,n_2,\ldots$.
Let $G=\la X| R\ra$. Then $G$ has a quotient $Q$ such that $G$ and $Q$ have the same torsion-free abelianization and
\begin{equation}
\label{eq:beta}
\beta_1(Q)\geq |X|-1-\sum_{i}\frac{1}{p^{n_i}}.
\end{equation}
In particular,
\begin{enumerate}
\item  If $|X|=3$, $\sum_i\frac{1}{p^{n_i}}\leq \frac{1}{2}$ and
the sequence $\{r_i\}$ ranges over the whole free group $F(X)$, then
$G$ and $Q$ are torsion and $\calT(G)=\calT(Q)=6$.
\item If $X=\{x_1,x_2,x_3\}$, $\sum_i\frac{1}{p^{n_i}}\leq \frac{1}{2}$ and
the sequence $\{r_i\}$ ranges over the whole derived subgroup $\gamma_2F(X)$,  then
$G$ and $Q$ are torsion-by-Abelian, the image of $x_1$ in $Q$ has infinite order and $\calT(G)=\calT(Q)=5$.
\end{enumerate}
\end{Theorem}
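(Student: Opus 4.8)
The plan is to derive the two numbered assertions from inequality \eqref{eq:beta} and to construct the quotient $Q$ by passing to a pro-$p$ completion. First the deductions. In both cases $|X|=3$ and $\sum_i p^{-n_i}\le\tfrac12$, so \eqref{eq:beta} gives $\beta_1(Q)\ge\tfrac32$; hence $Q$ is infinite and non-amenable, and Gaboriau's inequality $\calC(Q)\ge\beta_1(Q)+1$ yields $\calC(Q)\ge\tfrac52$, so Theorem~\ref{thm:Tarski56} applies to the $3$-generated group $Q$. In case $(1)$ every element of $G$ is the image of some $r_i$ and therefore has finite order, so $G$ and its quotient $Q$ are torsion; thus $\calT(Q)=6$ by Theorem~\ref{thm:Tarski56}(i), and $\calT(G)\le\calT(Q)=6$ since $Q$ is a quotient of $G$, while $\calT(G)\ge6$ by Theorem~\ref{t:csgh}(i). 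In case $(2)$ all relators lie in $\gamma_2F(X)$, so $G^{\abtf}=\dbZ^3$; moreover any element of $[G,G]$ lifts to some $r_i\in\gamma_2F(X)$ and hence is killed by $r_i^{p^{n_i}}$, so $[G,G]$, and likewise $[Q,Q]$, is torsion. Thus $G$ and $Q$ are torsion-by-Abelian and in particular contain no non-Abelian free subgroup, giving $\calT(G),\calT(Q)\ge5$; and since $Q^{\abtf}=G^{\abtf}=\dbZ^3$ the image of $x_1$ in $Q$ has infinite order, so $\calT(Q)\le5$ by Theorem~\ref{thm:Tarski56}(ii), whence $\calT(G)\le\calT(Q)=5$. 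So everything reduces to producing $Q$.

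For the construction I would let $Q$ be the image of $G$ in its pro-$p$ completion. By Lemma~\ref{lem:abtf}(a) this $Q$ has the same torsion-free abelianization as $G$, which disposes of the first requirement; the point is \eqref{eq:beta}. Choose a chain $Q=M_0\supseteq M_1\supseteq M_2\supseteq\cdots$ of normal subgroups of $p$-power index with $\bigcap_kM_k=\{1\}$ and $[Q:M_k]\to\infty$, and pull it back to a chain $G=U_0\supseteq U_1\supseteq\cdots$ of normal subgroups of $G$. Since $[U_k,U_k]U_k^p$ has $p$-power index in $G$, the kernel of $G\to Q$ lies inside it, so $\dim_{\dbF_p}H_1(U_k;\dbF_p)=\dim_{\dbF_p}H_1(M_k;\dbF_p)$. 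Reidemeister--Schreier presents $U_k$ on $(|X|-1)[G:U_k]+1$ generators, with one relator for each of the $[G:U_k]$ conjugates of each $r_i^{p^{n_i}}$; reading such a conjugate letter by letter, it factors as a product of ``cycles'' of length equal to the order $p^{e_i}$ of the image of $r_i$ in the finite $p$-group $G/U_k$, the product being a $p^{n_i-e_i}$-th power, so the class of the conjugate in $H_1(U_k;\dbF_p)$ is $p^{n_i-e_i}$ times a fixed vector and hence vanishes unless $e_i=n_i$, and when $e_i=n_i$ at most $[G:U_k]/p^{n_i}$ of these classes are independent. Therefore $\dim_{\dbF_p}H_1(M_k;\dbF_p)\ge\bigl((|X|-1)-\sum_ip^{-n_i}\bigr)[Q:M_k]+1$; dividing by $[Q:M_k]$ and letting $k\to\infty$ gives $\ge(|X|-1)-\sum_ip^{-n_i}$ for the mod-$p$ homology gradient of $Q$ (and in particular $Q$ is infinite whenever the right-hand side is positive), and the approximation theorem identifying this gradient with $\beta_1(Q)$ for the residually-$p$ group $Q$ — the input drawn from \cite{Os} and \cite{LuOs} — gives \eqref{eq:beta}.

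The main obstacle is exactly this $L^2$-Betti estimate, which is the technical core. Two features make it delicate. First, one cannot simply work with $G$ itself: a relator $r_i^{p^{n_i}}$ for which $r_i$ has order $p^{b_i}$ with $b_i<n_i$ in $G$ (or order with a part coprime to $p$) cuts homology by more than $p^{-n_i}$ per unit index, and $\beta_1(G)$ is correspondingly uncontrolled; passing to $Q=G_p$ is what forces every relevant element to have $p$-power or infinite order, so that the ``$p^{-n_i}$'' accounting in the Reidemeister--Schreier count becomes valid. Second, that count bounds only the mod-$p$ homology gradient of $Q$, which in general merely bounds $\beta_1(Q)$ from above; turning it into a genuine lower bound for the first $L^2$-Betti number of the finitely generated, typically infinitely presented, residually-$p$ group $Q$ requires the \emph{equality} form of $\dbF_p$-L\"uck approximation in this setting, and establishing (or quoting) this for the torsion(-by-Abelian) groups at hand, as in \cite{Os} and \cite{LuOs}, is the crux of the argument. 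The bookkeeping on the torsion-free abelianization, handled by Lemma~\ref{lem:abtf}, is by comparison routine.
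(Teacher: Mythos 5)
Your reduction of assertions (1) and (2) to the inequality \eqref{eq:beta} is fine and matches the intended argument (Gaboriau's inequality, Theorem~\ref{thm:Tarski56}, Theorem~\ref{t:csgh}(i), monotonicity of $\calT$ under quotients, and $Q^{\abtf}=G^{\abtf}$ to see that $x_1$ survives with infinite order). The gap is in the core step, the proof of \eqref{eq:beta} itself. Your Reidemeister--Schreier count is correct as far as it goes, but what it produces is a lower bound on the mod-$p$ homology gradient of $Q$ along a residual $p$-chain, and you then invoke ``the approximation theorem identifying this gradient with $\beta_1(Q)$.'' No such theorem is available here, and the known general inequality points the wrong way: for a finitely generated group one only has $\beta_1 \le$ the ($\dbQ$- or $\dbF_p$-) homology gradient, and equality is exactly L\"uck approximation, which in characteristic $p$ is open even for finitely presented groups and which for merely finitely generated (infinitely presented) residually-$p$ groups is known to fail -- indeed \cite{LuOs} is largely about such failures. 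Neither \cite{Os} nor \cite{LuOs} supplies the equality you need; what \cite{LuOs} provides (and what this paper actually uses) is the deficiency-based bound $\beta_1(G_p)\ge \mathit{vdef}_p(G)$ for \emph{finitely presented} $G$, combined with \cite[Lemma~5.5]{ErLu} ($\mathit{vdef}_p\ge \defi_p$ for $p$-regular presentations) and Lemma~\ref{lem:pregular} to reduce to the $p$-regular case; a large $\dim_{\dbF_p}H_1(M_k;\dbF_p)$ does not bound $\defi(M_k)$ from below, so your count cannot be fed into these results either.

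A second, related divergence: you take $Q=G_p$ outright, whereas the paper's $Q$ is $\varinjlim G(m)_p$ over the finite truncations $G(m)=\la X\mid R_m\ra$, obtained by applying Proposition~\ref{PT_finite} (hence Theorem~\ref{thm:PT} of Peterson--Thom, via deficiency) to each finitely presented $G(m)$ and then passing to the limit with Pichot's semicontinuity theorem \cite[Theorem~1.1]{Pi}, $\beta_1(Q)\ge\limsup_m\beta_1(G(m)_p)$, together with Lemma~\ref{lem:abtf}(b) for the abelianization. Since $G_p$ is a further quotient of this limit and lower bounds on $\beta_1$ do not pass to quotients, the estimate \eqref{eq:beta} for $G_p$ itself is not justified even granting the paper's theorem. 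So the statement you would need at the crux is either an unproved (and in this generality false) characteristic-$p$ approximation equality, or it must be replaced by the deficiency-plus-semicontinuity route; as written, the proposal does not prove \eqref{eq:beta}.
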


We start by stating (a special case of) a result of Peterson and Thom~\cite[Theorem~3.2]{PT} which
is similar to Theorem~\ref{PTP}:
\begin{Theorem}[\cite{PT}]
\label{thm:PT}
Let $G$ be a group given by a finite presentation
$\la X \mid r_1^{m_1},\ldots, r_k^{m_k}\ra$ for some $r_1,\ldots, r_k\in
F(X)$ and $m_i\in\dbN$. Assume that for each $1\leq i\leq k$, the order of $r_i$ in $G$ is equal to $m_i$. Then $\beta_1(G)\geq |X|-1-\sum_{i=1}^k \frac{1}{m_i}$.
\end{Theorem}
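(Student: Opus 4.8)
The plan is to read off the bound from the $L^2$-Euler characteristic of the presentation $2$-complex of $G$, exploiting the fact that the relators are \emph{proper powers} of elements of the prescribed orders to force the second $L^2$-Betti number of that complex to be strictly positive.

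First I would set $d=|X|$ and let $Y$ be the presentation $2$-complex of $\la X\mid r_1^{m_1},\dots,r_k^{m_k}\ra$: it has one $0$-cell, $d$ $1$-cells, $k$ $2$-cells, fundamental group $G$, and ordinary Euler characteristic $\chi(Y)=1-d+k$. Since von Neumann dimension is additive along the cellular $\ell^2$-chain complex of the universal cover $\widetilde Y$, the $L^2$-Euler characteristic of $Y$ equals $\chi(Y)$, that is, $\beta_0^{(2)}(Y)-\beta_1^{(2)}(Y)+\beta_2^{(2)}(Y)=1-d+k$. Because $\beta_0^{(2)}$ and $\beta_1^{(2)}$ of a space depend only on its $2$-type (attaching cells of dimension $\ge 3$ to turn $Y$ into a $K(G,1)$ changes neither), one has $\beta_0^{(2)}(Y)=\beta_0^{(2)}(G)\ge 0$ and $\beta_1^{(2)}(Y)=\beta_1(G)$; rearranging gives
$$\beta_1(G)=\beta_0^{(2)}(G)+\beta_2^{(2)}(Y)-1+d-k\ \ge\ d-1-k+\beta_2^{(2)}(Y),$$
so everything reduces to proving $\beta_2^{(2)}(Y)\ge k-\sum_{i=1}^k m_i^{-1}$.

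The substance of the proof — and the step I expect to be the main obstacle — is this lower bound on $\beta_2^{(2)}(Y)$. Since $Y$ has no $3$-cells, the second $L^2$-homology of $\widetilde Y$ is just $\ker\partial_2$, where $\partial_2\colon\ell^2(G)^k\to\ell^2(G)^d$ is the boundary operator given by the Fox-derivative matrix $D=(D_{ij})$, with $D_{ij}$ the image in $\dbZ G$ of $\partial(r_i^{m_i})/\partial x_j$. The product rule for Fox derivatives gives $\partial(r^m)/\partial x_j=(1+r+\dots+r^{m-1})\,\partial r/\partial x_j$, hence $D_{ij}=m_i\,p_i\,e_{ij}$, where $e_{ij}$ is the image of $\partial r_i/\partial x_j$ and $p_i=\frac{1}{m_i}\sum_{\ell=0}^{m_i-1}\bar r_i^{\,\ell}\in\dbQ G$. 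The hypothesis that $\bar r_i$ has order \emph{exactly} $m_i$ is precisely what makes $p_i$ a self-adjoint idempotent of $\dbQ G\subseteq\calN(G)$ with $\mathrm{tr}_{\calN(G)}(p_i)=m_i^{-1}$ (the coefficient of $1$ in $p_i$). Writing $D=M\,P\,E$ with $M=\diag(m_1,\dots,m_k)$ an invertible scalar matrix, $P=\diag(p_1,\dots,p_k)$ a projection, and $E=(e_{ij})$, one sees that $\partial_2$ and the operator defined by $PE$ have the same image up to closure, so $\mathrm{rank}_{\calN(G)}(\partial_2)\le\mathrm{rank}_{\calN(G)}(P)=\sum_{i=1}^k\mathrm{tr}(p_i)=\sum_{i=1}^k m_i^{-1}$. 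Therefore $\beta_2^{(2)}(Y)=\dim_{\calN(G)}\ker\partial_2=k-\mathrm{rank}_{\calN(G)}(\partial_2)\ge k-\sum_{i=1}^k m_i^{-1}$, which combined with the reduction above yields the theorem.

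Two routine but not entirely trivial points deserve care in writing this out: the $2$-type invariance of $\beta_0^{(2)}$ and $\beta_1^{(2)}$ (so that one may pass freely between $Y$, a $K(G,1)$, and $G$), and the basic dimension estimates over $\calN(G)$ — that $\mathrm{rank}(AB)\le\min(\mathrm{rank}A,\mathrm{rank}B)$ and that a self-adjoint idempotent of trace $t$ has image of von Neumann dimension $t$ — all of which are standard facts from L\"uck's theory of $L^2$-invariants. Everything else is bookkeeping with Fox calculus.
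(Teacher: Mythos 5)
This statement is not proved in the paper at all: it is quoted verbatim (as a special case of \cite[Theorem~3.2]{PT}), so there is no internal argument to compare yours against. Your proposal is a correct and essentially standard proof of the finite-presentation case. The two ingredients you flag as needing care are indeed the right ones and are both available in L\"uck's book: a $2$-connected map (such as the inclusion of the presentation complex $Y$ into a $K(G,1)$ obtained by attaching cells of dimension $\ge 3$) preserves $\beta_0^{(2)}$ and $\beta_1^{(2)}$, and additivity of von Neumann dimension gives both the $L^2$-Euler characteristic identity and the rank--nullity computation $\beta_2^{(2)}(Y)=k-\dim_{\calN(G)}\overline{\mathrm{im}\,\partial_2}$. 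The heart of the matter is exactly where you put it: since $r_i^{m_i}$ is a relator, $p_i=\frac{1}{m_i}\sum_{\ell=0}^{m_i-1}\bar r_i^{\,\ell}$ is a self-adjoint idempotent, and the hypothesis that $\bar r_i$ has order \emph{exactly} $m_i$ is what forces $\mathrm{tr}(p_i)=1/m_i$ rather than $1/\mathrm{ord}(\bar r_i)>1/m_i$; the factorization $D=MPE$ of the Fox-derivative matrix then bounds the rank of $\partial_2$ by $\sum_i 1/m_i$. For contrast, Peterson and Thom's own proof is cohomological rather than homological: they work with $1$-cocycles (derivations) valued in the ring of affiliated operators $\mathcal{U}(G)$, using its von Neumann regularity and the extended dimension function, and the same projections $p_i$ enter to bound the dimension of the constraints imposed by the relators; their argument has the advantage of handling infinitely many relators (which is why the paper's Appendix~B must pass through finite sub-presentations and a limit argument via Pichot's semicontinuity), whereas your Euler-characteristic argument is confined to finite presentations --- but that is all the quoted statement asserts.
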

In general, the assumption on the orders of $r_i$ cannot be eliminated since, for instance,
the trivial group has a presentation $\la x,y \mid x^m, x^{m+1}, y^m,y^{m+1}\ra$ for any $m\in\dbN$. If all $m_i$ are powers of a fixed prime $p$, it is possible that
Theorem~\ref{thm:PT} holds without any additional restrictions, but we are not able to
prove such a statement. What we can prove is the following variation:
\begin{Proposition}
\label{PT_finite}
Let $G$ be a group given by a finite presentation
$\la X \mid r_1^{m_1},\ldots, r_k^{m_k}\ra$ for some $r_1,\ldots, r_k\in
F(X)$, where each $m_i$ is a power of some fixed prime $p$. Let $G_{p}$ be the image of $G$ in its pro-$p$ completion
$\widehat G_p$. Then
$\beta_1(G_p)\geq |X|-1-\sum_{i=1}^k \frac{1}{m_i}$.
\end{Proposition}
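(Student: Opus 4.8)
The plan is to prove Proposition~\ref{PT_finite} not by a direct appeal to Theorem~\ref{thm:PT}, but via the mod-$p$ approximation theorem of L\"uck and Osin \cite{LuOs}, fed by an elementary count on the finite-sheeted covers of the presentation $2$-complex of $G$. A naive application of Theorem~\ref{thm:PT} is insufficient: to make the orders of the relators match the exponents one would first have to replace each $m_i$ by the order of $r_i$ in $G_p$, possibly a strictly smaller power of $p$, which only yields the weaker bound with $\frac1{m_i}$ replaced by the reciprocal of that order. The point of the covering-space count is that, modulo $p$, each relator $r_i^{m_i}$ costs at most $\frac1{m_i}$ per sheet, regardless of the order of $r_i$ in $G$.

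First I would fix the presentation $2$-complex $Y$ of $G$ --- one vertex, one oriented edge for each $x\in X$, and one $2$-cell $c_i$ attached along the loop $r_i^{m_i}$ for each $i$ --- so that $\pi_1(Y)=G$. Enumerate the normal subgroups of $p$-power index in $G$ and let $\Gamma_n$ be the intersection of the first $n$ of them; then $G=\Gamma_0\supseteq\Gamma_1\supseteq\cdots$ is a descending chain of normal subgroups of $p$-power index with $\bigcap_n\Gamma_n=N:=\ker(G\to G_p)$. Let $Y_n\to Y$ be the connected covering corresponding to $\Gamma_n\le\pi_1(Y)$, so $\pi_1(Y_n)=\Gamma_n$; it has $d_n:=[G:\Gamma_n]$ vertices, $d_n|X|$ edges and $d_nk$ two-cells. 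Since $Y_n$ is a $2$-complex with $\pi_1(Y_n)=\Gamma_n$, its cellular $\Fp$-chain complex gives $b_1(\Gamma_n;\Fp)=\dim_{\Fp}\ker(\partial_1\otimes\Fp)-\dim_{\Fp}\operatorname{im}(\partial_2\otimes\Fp)$.

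The heart of the matter is the estimate
\begin{equation}
\label{eq:PTfinmain}
\dim_{\Fp}\operatorname{im}(\partial_2\otimes\Fp)\ \le\ d_n\sum_{i=1}^{k}\frac1{m_i}.
\end{equation}
To prove it I would note that $\dim_{\Fp}\ker(\partial_1\otimes\Fp)=d_n|X|-(d_n-1)=d_n(|X|-1)+1$ because $Y_n$ is connected, and then analyse the $2$-cells. Let $q_i$ be the order of the image of $r_i$ in the finite $p$-group $G/\Gamma_n$ (a power of $p$ dividing $m_i$). The $d_n$ lifts of $c_i$ are indexed by the vertices of $Y_n$ (the cosets $\Gamma_n g$), and the lift sitting over $\Gamma_n g$ is attached along the lift of $r_i^{m_i}$ starting at $\Gamma_n g$; its cellular $1$-chain equals $\tfrac{m_i}{q_i}z_g$, where $z_g$ is the $1$-cycle traced by spelling $r_i$ repeatedly, once around the $\langle\bar r_i\rangle$-orbit of $\Gamma_n g$. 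As $z_g$ depends only on the orbit and there are $d_n/q_i$ orbits, the subspace of $\operatorname{im}(\partial_2\otimes\Fp)$ spanned by the images of the $i$-th family of $2$-cells is spanned by at most $d_n/q_i$ vectors; moreover it is $0$ unless $q_i=m_i$, since otherwise $p\mid \tfrac{m_i}{q_i}$. Hence its dimension is at most $d_n/m_i$ in every case, and summing over $i$ gives \eqref{eq:PTfinmain}; consequently $b_1(\Gamma_n;\Fp)\ge d_n\bigl(|X|-1-\sum_i\tfrac1{m_i}\bigr)+1$ for every $n$.

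Finally, if $G_p$ is infinite then $d_n\to\infty$, and by the mod-$p$ approximation theorem \cite{LuOs}, applied to the finitely presented group $G$ and the $p$-power chain $(\Gamma_n)$ with $\bigcap_n\Gamma_n=N$, one has $\beta_1(G_p)=\beta_1(G/N)=\lim_n b_1(\Gamma_n;\Fp)/d_n\ge |X|-1-\sum_i\tfrac1{m_i}$. If $G_p$ is finite it is a finite $p$-group acting (through conjugation) on the $\Fp$-vector space $H_1(N;\Fp)=N^{\mathrm{ab}}\otimes\Fp$, and this action has no nonzero fixed vector: a $G$-invariant nonzero homomorphism $N\to\Fp$ would have kernel normal in $G$ of $p$-power index, contradicting that $N$ is the pro-$p$ residual; since a $p$-group acting on a nonzero $\Fp$-space has nonzero fixed vectors, $H_1(N;\Fp)=0$, and applying \eqref{eq:PTfinmain} with $\Gamma_n=N$ yields $0\ge [G:N]\bigl(|X|-1-\sum_i\tfrac1{m_i}\bigr)+1$, so $|X|-1-\sum_i\tfrac1{m_i}<0=\beta_1(G_p)$. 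The hardest step will be the bookkeeping behind \eqref{eq:PTfinmain} --- making precise that it is the exponent $m_i$, and not the order of $r_i$ in $G$ or in $G_p$, that controls the $\Fp$-rank of $\partial_2$ --- together with invoking \cite{LuOs} in the form that evaluates $\beta_1$ on the quotient $G/N=G_p$.
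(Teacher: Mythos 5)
Your covering-space count is correct and is, in essence, the right combinatorics: for a normal subgroup $\Gamma\le G$ of $p$-power index $d$, the boundaries of the $d$ lifts of the $i$-th $2$-cell are either divisible by $p$ (when the image of $r_i$ in $G/\Gamma$ has order $<m_i$) or constant along the $\la \bar r_i\ra$-orbits in the fibre, so the image of $\partial_2\otimes\F_p$ has dimension at most $d\sum_i 1/m_i$ and $\dim_{\F_p}H_1(\Gamma;\F_p)\ge d\bigl(|X|-1-\sum_i 1/m_i\bigr)+1$; your treatment of the case where $G_p$ is finite is also fine. The genuine gap is the last step. There is no ``mod-$p$ approximation theorem'' in \cite{LuOs} (or elsewhere) giving $\beta_1(G_p)=\lim_n b_1(\Gamma_n;\F_p)/d_n$, nor even the one-sided inequality $\beta_1(G_p)\ge\limsup_n b_1(\Gamma_n;\F_p)/d_n$ that your argument actually requires. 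Approximation of $\beta_1$ by normalized $\F_p$-Betti numbers is an open problem already for finitely presented groups and chains with trivial intersection --- that is precisely the subject of \cite{ErLu}, which establishes it only under extra hypotheses --- and your setting is harder still: the chain intersects in $N$, and $G_p=G/N$ is in general only finitely generated, a regime in which \cite{LuOs} show that even rational approximation can fail. (A secondary, repairable, imprecision: you silently identify $b_1(\Gamma_n;\F_p)$ with $b_1(\Gamma_n/N;\F_p)$; this does hold here because $N$ is contained in $[\Gamma_n,\Gamma_n]\Gamma_n^p$, but it needs saying.)

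What \cite{LuOs} actually provides --- and what the paper uses --- is the deficiency-based inequality $\beta_1(G_p)\ge \mathit{vdef}_p(G)=\sup_\Gamma\frac{\defi(\Gamma)-1}{[G:\Gamma]}$ (\cite[Lemma~3.6]{LuOs}), and your estimate cannot be fed into it: you bound $\dim_{\F_p}H_1(\Gamma_n;\F_p)$ from below, whereas one needs $\defi(\Gamma_n)$ from below, and the general inequality between these goes the wrong way ($\defi(H)\le\dim_{\F_p}H_1(H;\F_p)$). To convert your orbit count into a deficiency bound one must actually discard all but one lifted relator per $\la \bar r_i\ra$-orbit, i.e.\ show those relators are redundant as relators and not merely dependent mod $p$; this is exactly where one needs $r_i$ to have order precisely $m_i$ in the relevant $p$-quotients. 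That is why the paper's proof first passes to a $p$-regular sub-presentation with the same pro-$p$ completion (Lemma~\ref{lem:pregular}) --- which can only increase the $p$-deficiency --- and then applies \cite[Lemma~5.5]{ErLu} ($\mathit{vdef}_p(G)\ge \defi_p(X,R)$ for $p$-regular presentations) together with \cite[Lemma~3.6]{LuOs}. So your counting lemma is sound, but the bridge from it to $\beta_1(G_p)$ rests on an unproved (indeed open) approximation statement rather than on anything in the cited literature; the $p$-regularity reduction you hoped to avoid is exactly what makes the known, deficiency-based route work.
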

Before establishing Proposition~\ref{PT_finite} we show how Theorem~\ref{PTP} follows from it.

\begin{proof}[Proof of Theorem~\ref{PTP}] If the sequence $\{r_i\}$ is finite,
then the group $Q=G_p$ satisfies \eqref{eq:beta} by Proposition~\ref{PT_finite}
and has the same torsion-free abelianization as $G$ by Lemma~\ref{lem:abtf}(a).

If $\{r_i\}$  is infinite, let $R_m=\{r_i^{p^{n_i}}\}_{i=1}^m$ and $G(m)=\la X\mid R_m\ra$.
Let $\beta=|X|-1-\sum\limits_{i=1}^{\infty}\frac{1}{p^{n_i}}$.
Then $\beta_1(G(m)_p)\geq \beta$
for each $m$ by Proposition~\ref{PT_finite}. Note that $G(m+1)_p$ is a quotient of $G(m)_p$. Let $Q=\varinjlim G(m)_p$, that is, if $G(m)_p=F(X)/N_m$, put
$Q=F(X)/\cup_{m\in\dbN}N_m$. Then $Q$ is clearly a quotient of $G$; on
the other hand, the sequence $\{G(m)_p\}$ converges to $Q$ in the space of marked
groups, and therefore by a theorem of Pichot~\cite[Theorem~1.1]{Pi} we have $\beta_1(Q)\geq \limsup\limits_m \beta_1(G(m)_p)\geq \beta$.

Finally, for any sufficiently large $m$ we
have $G^{\abtf}\cong G(m)^{\abtf}\cong (G(m)_p)^{\abtf}\cong Q^{\abtf}.$
by Lemma~\ref{lem:abtf}.
\end{proof}

We proceed with the proof of Proposition~\ref{PT_finite}.
Below $p$ will be a fixed prime.
Let $F$ be a free group. Given an element $f\in F$, define $s(f)\in F$ and $e(f)\in \dbN$
by the condition that $f=s(f)^{p^{e(f)}}$ and $s(f)$ is not a $p^{\rm th}$-power in $F$.
The following definition was introduced by Schlage-Puchta in \cite{SP}:

\begin{Definition}\label{p-def}\rm Given a presentation $(X,R)$ by generators and relations with $X$ finite,
define its $p$-deficiency $\defi_p(X,R)$ by $\defi_p(X,R)=|X|-1-\sum_{r\in R}\frac{1}{p^{e(r)}}$.
\end{Definition}
In terms of the $p$-deficiency, Proposition~\ref{PT_finite} reduces to the following result:

\begin{Proposition}\label{B.5}
\label{PT_finite2}
Let $(X,R)$ be a finite presentation of a group $G$.
Then $\beta_1(G_p)\geq def_p(X,R)$.
\end{Proposition}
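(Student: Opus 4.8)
The plan is to deduce the bound from a mod-$p$ approximation theorem for the first $L^2$-Betti number, in the spirit of \cite{Os,LuOs}. No preliminary normalization of the presentation is needed: by definition of $s(\cdot)$ and $e(\cdot)$ every relator already has the shape $r=s(r)^{p^{e(r)}}$ with $s(r)$ not a $p$-th power in $F:=F(X)$, so writing $R=\{s_i^{p^{a_i}}\}_i$ with $s_i:=s(r_i)$ and $a_i:=e(r_i)$ we have $\defi_p(X,R)=|X|-1-\sum_i p^{-a_i}$. Note that $G_p$, being the image of the finitely generated group $G$ in its pro-$p$ completion, is finitely generated and residually-$p$. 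For such a group the extension of L\"uck's approximation theorem carried out in \cite{LuOs} gives, for an arbitrary $p$-chain $G_p=\bar Q_0\rhd\bar Q_1\rhd\cdots$ (normal subgroups of $p$-power index with trivial intersection),
\[
\beta_1(G_p)=\lim_{n\to\infty}\frac{\dim_{\Fp}H_1(\bar Q_n;\Fp)}{[G_p:\bar Q_n]}.
\]
Hence it suffices to prove $\dim_{\Fp}H_1(\bar Q_n;\Fp)\ge[G_p:\bar Q_n]\,\defi_p(X,R)$ for every $n$ (an additive constant being irrelevant in the limit).

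To estimate the left-hand side from the presentation $\langle X\mid R\rangle$ of $G$, I would introduce $Q_n^{\sharp}$, the preimage of $\bar Q_n$ under $G\to G_p$; it is normal in $G$, has index $m:=[G_p:\bar Q_n]$, and $G/Q_n^{\sharp}\cong G_p/\bar Q_n$ is a finite $p$-group. The first observation is that $H_1(Q_n^{\sharp};\Fp)=H_1(\bar Q_n;\Fp)$: the kernel $\Delta$ of $G\to G_p$ lies in the mod-$p$ Frattini subgroup $\Phi_p(Q_n^{\sharp})=[Q_n^{\sharp},Q_n^{\sharp}](Q_n^{\sharp})^{p}$, since $G/\Phi_p(Q_n^{\sharp})$ is a finite $p$-group and therefore a quotient of $G_p$; thus $\Phi_p(Q_n^{\sharp})/\Delta=\Phi_p(\bar Q_n)$ and the mod-$p$ abelianizations of $Q_n^{\sharp}$ and $\bar Q_n$ coincide. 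The second step is a Reidemeister--Schreier computation: applied to $\langle X\mid R\rangle$ and the finite-index normal subgroup $\hat Q_n\lhd F$ (the preimage of $\bar Q_n$), it produces a presentation of $Q_n^{\sharp}$ with $m(|X|-1)+1$ generators whose relators are the Schreier rewrites of the conjugates $t\,r_i\,t^{-1}$, for $t$ in a transversal of $\hat Q_n$ in $F$. Writing $p^{c_i}$ for the order of $s_i$ in the finite $p$-group $G/Q_n^{\sharp}$ (so $c_i\le a_i$, because $s_i^{p^{a_i}}=1$ there), one has $s_i^{p^{c_i}}\in\hat Q_n$ and $t\,r_i\,t^{-1}=(t\,s_i^{p^{c_i}}t^{-1})^{p^{\,a_i-c_i}}$, so each of these relators is a $p^{\,a_i-c_i}$-th power of a Schreier word $v$. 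Consequently, in $H_1(Q_n^{\sharp};\Fp)=\Fp^{\,m(|X|-1)+1}/\langle\text{abelianized relators}\rangle$ the relators arising from $r_i$ span the subspace generated by the vectors $p^{\,a_i-c_i}\bar v$, which is $\{0\}$ when $c_i<a_i$, and when $c_i=a_i$ has dimension at most the number of distinct conjugates $t\,r_i\,t^{-1}$; one checks (this is where the hypothesis that $s_i$ is not a $p$-th power enters, via the structure of $C_F(r_i)$) that this number is $m/p^{c_i}=m/p^{a_i}$. Summing over $i$, the relators span at most $\sum_i m/p^{a_i}$ dimensions, so $\dim_{\Fp}H_1(Q_n^{\sharp};\Fp)\ge m(|X|-1)+1-\sum_i m/p^{a_i}=m\,\defi_p(X,R)+1$, as required. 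Morally, the exponents $p^{a_i}$ survive in the estimate because a proper $p$-th power becomes trivial in mod-$p$ homology -- the ``freshman's dream'' phenomenon that also animates the Golod--Shafarevich heuristic behind Schlage-Puchta's $p$-deficiency (Definition~\ref{p-def}).

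The hard part will be the approximation input: one needs that for a finitely generated (not necessarily finitely presented) residually-$p$ group the first $L^2$-Betti number is computed by the $\Fp$-homology gradient of any $p$-chain. This combines L\"uck-type approximation in the finitely generated setting with the fact that along a $p$-chain the rational and mod-$p$ first Betti numbers have the same gradient, i.e.\ that the number of $p$-power invariant factors in $\mathrm{tors}\,H_1(\bar Q_n;\dbZ)$ is $o([G_p:\bar Q_n])$; both ingredients lie in the circle of ideas of \cite{Os,LuOs}. Two lesser points to handle carefully are the count of which Schreier conjugates of $r_i$ coincide (needed for the clean bound $m/p^{a_i}$ rather than a weaker one) and the trivial remark that if $\defi_p(X,R)\le0$ there is nothing to prove, so one may assume $G_p$ is infinite. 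Specializing to $R=\{r_1^{m_1},\dots,r_k^{m_k}\}$ with $m_i=p^{n_i}$ and using $e(r_i^{p^{n_i}})\ge n_i$ then yields Proposition~\ref{PT_finite}; this route is independent of, though parallel to, the operator-algebraic argument of Peterson--Thom behind Theorem~\ref{thm:PT}.
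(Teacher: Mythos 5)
The Reidemeister--Schreier counting in your second paragraph is essentially sound: it is Schlage-Puchta's argument from \cite{SP}, and together with your Frattini observation (which correctly identifies $H_1(Q_n^{\sharp};\Fp)$ with $H_1(\bar Q_n;\Fp)$) it does yield $\dim_{\Fp}H_1(\bar Q_n;\Fp)\ge [G_p:\bar Q_n]\,\defi_p(X,R)+1$ along a $p$-chain, provided you carry out the conjugate count as a double-coset count $\hat Q_n\backslash F/\la s_i\ra$ using $C_F(r_i)=\la s_i\ra$. The genuine gap is the ``approximation input'' you defer to the last paragraph. The equality $\beta_1(G_p)=\lim_n \dim_{\Fp}H_1(\bar Q_n;\Fp)/[G_p:\bar Q_n]$ -- and even the one inequality you actually need, namely $\beta_1(G_p)\ge$ the $\Fp$-homology gradient -- is not in \cite{LuOs} or \cite{ErLu} and is not a known theorem; it is a well-known open problem. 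L\"uck-type approximation requires finite presentability, which $G_p$ need not have, and even for finitely presented groups it gives only the opposite direction: rational Betti numbers of the finite quotients approximate $\beta_1$, and mod-$p$ Betti numbers dominate the rational ones, so the $\Fp$-gradient is only known to be $\ge\beta_1$, never $\le\beta_1$. Your auxiliary claim that the number of $p$-power invariant factors of the torsion of $H_1(\bar Q_n;\dbZ)$ is $o([G_p:\bar Q_n])$ is exactly this open torsion-growth problem restated, not a result lying ``in the circle of ideas'' of \cite{Os,LuOs}. So, as written, your argument bounds from below a gradient-type invariant of $G_p$ (mod-$p$ homology gradient, hence rank gradient), but supplies no bridge from that quantity down to $\beta_1(G_p)$.

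This is precisely the point where the paper's proof diverges. Instead of the $\Fp$-homology gradient it bounds the virtual $p$-deficiency $\mathit{vdef}_p(G)=\sup_H\frac{\defi(H)-1}{[G:H]}$, for which the inequality $\beta_1(G_p)\ge \mathit{vdef}_p(G)$ \emph{is} available (\cite[Lemma~3.6]{LuOs}; in essence the $L^2$-Morse inequality $\beta_1(H)\ge \defi(H)-1$ for the finitely presented finite-index subgroups $H$ of $G$, combined with multiplicativity of $\beta_1$). The price of working with deficiency is that it does not enjoy the freshman's dream: a Schreier relator which is a proper $p$-th power still counts as a relator in the deficiency of $Q_n^{\sharp}$, so the estimate $\mathit{vdef}_p(G)\ge \defi_p(X,R)$ of \cite[Lemma~5.5]{ErLu} requires the presentation to be $p$-regular, i.e.\ each $s(r)$ must have order exactly $p^{e(r)}$ in $G_p$. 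The general case is then reduced to the $p$-regular one by Lemma~\ref{lem:pregular}: a relator $r$ whose $s(r)$ has smaller order in $G_p$ is redundant in the pro-$p$ completion and may be deleted, which only increases $\defi_p$. To repair your proof you would either have to establish the open approximation statement (far beyond what is needed here) or replace the approximation step by this deficiency/$p$-regularization route, at which point your homological counting is no longer the relevant estimate.
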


As usual, for a finitely presented group $G$ we define $\defi(G)$ to be the maximal
possible value of the difference $|X|-|R|$ where $(X,R)$ ranges over all finite presentations
of $G$.

\begin{Definition}[\cite{LuOs}]\rm Given a finitely presented group $G$, define
the quantity $\mathit{vdef}_p(G)$ by $\mathit{vdef}_p(G)=\sup_H \frac{\defi(H)-1}{[G:H]}$
where $H$ ranges over all normal subgroups of $G$ of $p$-power index.
\end{Definition}\rm

\begin{Definition}[\cite{ErLu}]\rm A presentation $(X,R)$ will be called {\it $p$-regular}
if for any $r\in R$ the element $s(r)$ has order (precisely) $p^{e(r)}$ in the group $\la X|R \ra_p$.
\end{Definition}

According to \cite[Lemma~3.6]{LuOs}, for any finitely presented group $G$
we have $\beta_1(G_p)\geq vdef_p(G)$. On the other hand, by \cite[Lemma~5.5]{ErLu},
if a group $G$ has a finite $p$-regular presentation $(X,R)$,
then $vdef_p(G)\geq def_p(X,R)$. These two results imply Proposition~\ref{B.5}
in the case of $p$-regular presentations. The proof in the general case will be completed via the following lemma.

\begin{Lemma}
\label{lem:pregular}
Let $(X,R)$ be a finite presentation. Then there exists a subset $R'$
of $R$ such that the presentation $(X,R')$ is $p$-regular and the natural surjection
$\la X|R'\ra \to \la X|R\ra$ induces an isomorphism of pro-$p$ completions
$\widehat{\la X|R'\ra}_p\to \widehat{\la X|R\ra}_p$ and hence also an
isomorphism of $\la X|R'\ra_p$ onto $\la X|R\ra_p$.
\end{Lemma}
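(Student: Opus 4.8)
The plan is to build $R'$ by a greedy/transfinite selection process that keeps only those relators $r^{p^{n}}\in R$ (written with $s(r)$ not a $p$-th power) whose ``base'' $s(r)$ genuinely has order $p^{e(r)}$ in the pro-$p$ group being presented so far; relators that fail this test are discarded because, as I will argue, they are already consequences (in the pro-$p$ category) of the ones kept. Concretely, well-order $R=\{\rho_\alpha\}_{\alpha<\kappa}$ and define $R'$ by recursion: having decided $R'_{<\alpha}=R'\cap\{\rho_\beta:\beta<\alpha\}$, put $\rho_\alpha=r^{p^{n}}$ into $R'$ if and only if $s(r)$ has order exactly $p^{e(r)}$ (equivalently, order not dividing $p^{e(r)-1}$) in the pro-$p$ completion of $\langle X\mid R'_{<\alpha}\rangle$; otherwise leave it out. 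Set $R'=\bigcup_\alpha R'_{<\alpha}$. By construction $(X,R')$ is $p$-regular: for $\rho_\alpha=r^{p^n}\in R'$ the order of $s(r)$ in $\langle X\mid R'_{<\alpha}\rangle_p$ is $p^{e(r)}$, and since $\langle X\mid R'\rangle_p$ is a quotient of $\langle X\mid R'_{<\alpha}\rangle_p$ in which the additional relators only impose $s(\rho_\beta)^{p^{e(\rho_\beta)}}=1$, the order of $s(r)$ cannot increase, and it cannot drop below $p^{e(r)}$ because $r^{p^n}=s(r)^{p^{e(r)}}$ is itself among the relators; so it stays exactly $p^{e(r)}$.

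The substance is the claim that the inclusion $R'\subseteq R$ induces an isomorphism $\widehat{\langle X\mid R'\rangle}_p\xrightarrow{\ \sim\ }\widehat{\langle X\mid R\rangle}_p$. Surjectivity of the induced map $\langle X\mid R'\rangle\twoheadrightarrow\langle X\mid R\rangle$ is clear (more relators), and it stays surjective after pro-$p$ completion. For injectivity it suffices to show every relator $r^{p^n}\in R\setminus R'$ already vanishes in $\widehat{\langle X\mid R'\rangle}_p$. Fix such a $\rho_\alpha=r^{p^n}$; since it was rejected, $s(r)$ has order $p^{m}$ with $m<e(r)$ (or order $1$) in $\langle X\mid R'_{<\alpha}\rangle_p$, hence $s(r)^{p^{e(r)}}=1$ there, i.e. $r^{p^n}=s(r)^{p^{e(r)}}=1$ in $\langle X\mid R'_{<\alpha}\rangle_p$, a fortiori in the further quotient $\langle X\mid R'\rangle_p$ and in $\widehat{\langle X\mid R'\rangle}_p$. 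Here I need $e(\cdot)$ and $s(\cdot)$ to interact correctly: the integer $n$ in the presentation need not equal $e(r^{p^n})$, because $r$ itself may be a proper $p$-th power in $F(X)$; writing $r^{p^n}=s(r)^{p^{e(r^{p^n})}}$ with $s(r)$ not a $p$-th power, one has $e(r^{p^n})\ge n$, and the whole argument should be phrased in terms of $s$ and $e$ of the element $r^{p^n}\in F(X)$ rather than of $r$ — I would standardize notation accordingly before writing the details.

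Finally, to see $\widehat{\langle X\mid R'\rangle}_p\to\widehat{\langle X\mid R\rangle}_p$ is an isomorphism of \emph{topological} groups (not just of the dense images $\langle X\mid R'\rangle_p\to\langle X\mid R\rangle_p$), note both completions are inverse limits of finite $p$-groups; a continuous surjection of profinite groups which is also injective is automatically a homeomorphism, and injectivity on the dense subgroups $\langle X\mid R'\rangle_p$ forces injectivity of the completions because the map is, by the previous paragraph, already an isomorphism of the abstract images, which are dense. (Equivalently: both groups classify the same finite $p$-quotients of $F(X)$, since a finite $p$-group quotient of $\langle X\mid R\rangle$ is exactly a finite $p$-group quotient of $F(X)$ killing every $r^{p^n}$, and by the rejected-relator argument that is the same as killing every relator in $R'$.) The ``in particular'' clause about $\langle X\mid R'\rangle_p\to\langle X\mid R\rangle_p$ being an isomorphism then follows, since $\langle X\mid R\rangle_p$ is by definition the image of $\langle X\mid R\rangle$ in its pro-$p$ completion, and likewise for $R'$, and the isomorphism of completions carries one dense image onto the other. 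The main obstacle I anticipate is purely bookkeeping: tracking $s$ and $e$ through the substitution $r\mapsto r^{p^n}$, and making sure the recursion is well-defined when $R$ is infinite (it is, since each step depends only on a group already constructed, and $R'_{<\alpha}$ only grows).
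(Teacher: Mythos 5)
There is a genuine error, and it sits exactly at the point the lemma is really about: the $p$-regularity of $(X,R')$. Your selection rule tests each relator $\rho_\alpha$ only against the \emph{previously kept} relators $R'_{<\alpha}$, and your justification that regularity survives passage to the final group is the sentence ``it cannot drop below $p^{e(r)}$ because $s(r)^{p^{e(r)}}$ is itself among the relators.'' That is a non sequitur: having $s(r)^{p^{e(r)}}$ as a relator bounds the order of $s(r)$ from \emph{above} by $p^{e(r)}$; it in no way prevents the \emph{later} kept relators $\rho_\beta$, $\beta>\alpha$, from forcing the order strictly lower. Concretely, take $X=\{x,y\}$ and $R=\{x^{p^2},\,x^{p}\}$ processed in this order (so $s=x$, $e=2$ for the first relator and $e=1$ for the second). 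Under your criterion (in the ``order not dividing $p^{e-1}$'' reading, which is the only reading compatible with your injectivity argument) both relators are kept, since $x^{p}\neq 1$ in the free pro-$p$ group and $x\neq 1$ in $\widehat{\la x,y\mid x^{p^2}\ra}_p$; but in $\la X\mid R'\ra_p=\la X\mid R\ra_p$ the element $x$ has order $p<p^{2}$, so $(X,R')$ is not $p$-regular. (Under the literal ``order exactly $p^{e(r)}$'' reading the situation is worse: with $R=\{x^{p^2}\}$ alone the relator is rejected because $x$ has infinite order in the free pro-$p$ group, and then the completions of $\la X\mid R'\ra$ and $\la X\mid R\ra$ are not isomorphic.) Your process is also order-dependent: reversing the order in the example happens to give the right answer, which already signals that a one-pass greedy filtration cannot be the whole story.

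The part of your argument that does work is the easy direction: a relator rejected because $s(r)^{p^{e(r)-1}}=1$ holds in the group generated by the kept relators is indeed a pro-$p$ consequence of them, so the completions agree. What is missing is the converse-type step that the paper supplies and that your scheme avoids only by giving up regularity: the paper removes a relator $r$ precisely when $s(r)$ has too small an order in the group presented by \emph{all} of $R$ (so regularity of the surviving presentation is re-checked against the full current relator set after each removal, and finiteness of $R$ gives termination), and the real content is showing that such an $r$ is redundant, i.e.\ that $s(r)^{p^{e(r)}}=1$ already in $\widehat{\la X\mid R\setminus\{r\}\ra}_p$. This requires a genuinely nontrivial observation: writing $g$ for the image of $s(r)^{p^{e(r)-1}}$ in $\widehat{\la X\mid R\setminus\{r\}\ra}_p$, one has $g\in\lla g^{p}\rra$ because $g$ dies in $\widehat{\la X\mid R\ra}_p$ and the kernel is the closed normal closure of $g^p$, and in a pro-$p$ group $g\in\lla g^p\rra$ forces $g=1$. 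Your proposal contains no substitute for this step, and without it (or some iteration-until-stable scheme whose termination and correctness you would then have to prove) the claimed $p$-regularity of $(X,R')$ is simply not established.
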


\begin{proof}[Proof of Lemma~\ref{lem:pregular}]
Let $G=\la X|R\ra$, and assume that $(X,R)$ is not $p$-regular. Thus there exists
$r\in R$ such that the order of $s(r)$ in $G_p$ is strictly smaller than
$p^{e(r)}$.  We will show that  if we set $R'=R\setminus\{r\}$ and $G'=\la X|R'\ra$,
then the natural map $\widehat{G'}_p\to \widehat{G}_p$ is an isomorphism.
Lemma~\ref{lem:pregular} will follow by multiple applications of this step.

If a discrete group is given by a presentation by generators and relators, its
pro-$p$ completion is given by the same presentation in the category of pro-$p$ groups.
It follows that
\begin{equation}
\label{prop_pres}
 \widehat{G}_p\cong  \widehat{G'}_p/\lla s(r)^{p^{e(r)}}\rra
\end{equation}
where $\lla S \rra$ is the closed normal subgroup generated by a set $S$. Thus,
it is sufficient to show that $s(r)^{p^{e(r)}}= 1$ in $\widehat{G'}_p$.
We will show that already $s(r)^{p^{e(r)-1}}= 1$ in $\widehat{G'}_p$

Let $m$ be the order of $s(r)$ in $\widehat G_p$. Then by
assumption $m<p^{e(r)}$; on the other hand, $m$ must be a power of $p$
(since $\widehat G_p$ is pro-$p$), so $m$ divides $p^{e(r)-1}$.
Thus, if we let $g$ be the image of $s(r)^{p^{e(r)-1}}$ in $\widehat{G'}_p$,
then $g$ lies in the kernel of the homomorphism $\widehat{G'}_p\to \widehat{G}_p$,
whence by \eqref{prop_pres}, $g$ lies in the closed normal subgroup generated by $g^p$.
It is easy to see that this cannot happen in a pro-$p$ group unless $g=1$.
\end{proof}

\section{Golod-Shafarevich groups}

In this section we introduce Golod-Shafarevich groups and give a self-contained proof
of Proposition~\ref{cor:GS1}.

The definitions of Golod-Shafarevich groups and the related notion of
a weight function will be given in a simplified form below since this will be
sufficient for the purposes of this paper.  For more details the reader
is referred to \cite{Er2}.

Let $p$ be a fixed prime number. Given a finitely generated group $G$,
let $\{\omega_n G\}_{n\in\dbN}$ be the {\it Zassenhaus $p$-filtration} of $G$
defined by $\omega_n G=\prod_{i\cdot p^j\geq n}(\gamma_i G)^{p^j}$.
It is easy to see that $\{\omega_n G\}$ is a descending chain of
normal subgroups of $p$-power index in $G$ satisfying
\begin{equation}[\omega_n G, \omega_m G]\subseteq \omega_{n+m}G\quad \mbox{and }\quad
(\omega_n G)^p\subseteq \omega_{np}G.\label{eq:90}\end{equation}
Moreover, $\{\omega_n G\}$ is a base for the pro-$p$ topology on $G$, so in particular, $\cap \omega_n G=\{1\}$ if and only if $G$ is a residually-$p$ group.

Now let $F$ be a finitely generated free group.
Then $F$ is residually-$p$ for any $p$, so for any $f\in F\setminus \{1\}$
there exists (unique) $n\in\dbN$ such that $f\in \omega_n F\setminus \omega_{n+1} F$.
This $n$ will be called the degree of $f$ and denoted $\deg(f)$. We set
$\deg(1)=\infty$

\begin{Definition}\rm Let $F$ be a finitely generated free group.
\begin{itemize}
\item[(i)] A function $W\colon F\to \dbN\cup\{\infty\}$ will be called a {\it weight function}
if $W(f)=\tau^{\deg(f)}$ where $\tau\in (0,1)$ is a fixed real number.
\item[(ii)] If $W$ is a weight function on $F$ and $\pi\colon F\to G$ an epimorphism, then $W$ induces a function on $G$ (also denoted by $W$)
given by
$$W(g)=\inf\{W(f): \pi(f)=g\}$$
Such $W$ will be called a {\it valuation} on $G$.
\item[(iii)] If $W$ is a valuation on $G$, for any countable subset $S$ of $G$
we put $W(S)=\sum_{s\in S}W(s)\in \dbR_{\geq 0}\cup\{\infty\}$.
\end{itemize}
\end{Definition}

The following remark is a reformulation of property (\ref{eq:90}) above.
\begin{Remark}
\label{val:props}
Let $W$ be a valuation on a (finitely generated) group $G$. Then
 for any $g,h\in G$ we have
\begin{itemize}
\item[(i)] $W(gh)\leq\max\{W(g),W(h)\}$ and $W(g^{-1})=W(g)$
\item[(ii)] $W([g,h])\leq W(g)W(h)$
\item[(iii)] $W(g^p)\leq W(g)^p$.
\end{itemize}
\end{Remark}

\begin{Definition}\rm $\empty$
\begin{itemize}
\item[(i)] Let $\la X|R \ra$ be a presentation of a group $G$ with $|X|<\infty$ and $W$ a weight function on $F(X)$. Then we will call the triple $(X,R,W)$ a {\it weighted presentation} of $G$.
\item[(ii)] A weighted presentation $(X,R,W)$ will be called {\it Golod-Shafarevich}
if $$W(X)-W(R)-1>0.$$
\item[(iii)] A finitely generated group $G$ is called {\it Golod-Shafarevich}
(with respect to $p$) if it has a Golod-Shafarevich weighted presentation.
\end{itemize}
\end{Definition}

As was already proved in 1960's, Golod-Shafarevich groups are always infinite; in fact, they have infinite pro-$p$ completions
(see \cite[\S~2-4]{Er2}\footnote{In the foundational paper \cite{GS} the same statement was proved
for a different, although very similar, class of groups.}).
Also by the nature of their definition, any Golod-Shafarevich group has a lot
of quotients which are still Golod-Shafarevich, thanks to the following
observation:

\begin{Remark}
\label{obs:GS}
Let $(X,R,W)$ be a Golod-Shafarevich weighted presentation of a group $G$,
and let $\eps=W(X)-W(R)-1$ (so that $\eps>0$ by assumption).
Then for any $T\subseteq G$ with $W(T)<\eps$, the group $G/\lla T\rra$
is also Golod-Shafarevich (and therefore infinite).
\end{Remark}

The following proposition is a natural generalization of \cite[Theorem~3.3]{Er2}.
In fact, it is a special case of a result from \cite{EJ2} (see \cite[Lemma~5.2]{EJ2} and a remark
after it), but since the setting in \cite{EJ2} is much more general than ours, we present
the proof for the convenience of the reader.

\begin{Proposition}
\label{GS1}
Let $G$ be a group with weighted presentation $(X,R,W)$. Let $\Sigma$ be a countable
collection of finite subsets of $G$ such that $W(S)<1$ for each $S\in\Sigma$.
Then for every $\eps>0$ there is a subset $R_{\eps}$ of $G$
with $W(R_{\eps})<\eps$ and the following property: if $G'=G/\lla R_{\eps}\rra$,
then for each $S\in\Sigma$, the image of $S$ in $G'$ generates a finite group.

In particular, by Remark~\ref{obs:GS},
if the weighted presentation $(X,R,W)$ is Golod-Shafare\-vich,
by choosing small enough $\eps$, we can ensure that $G'$ is Golod-Shafare\-vich.
\end{Proposition}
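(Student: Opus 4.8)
The plan is to reduce to killing one member of $\Sigma$ at a time. Enumerate $\Sigma=\{S_1,S_2,\dots\}$; it suffices to prove that for every finite $S\subseteq G$ with $W(S)<1$ and every $\eta>0$ there is a finite subset $T=T(S,\eta)$ of $\la S\ra$ with $W(T)<\eta$ such that the image of $S$ in $G/\lla T\rra$ generates a finite group. Granting this, set $R_{\eps}=\bigcup_{k\ge 1}T(S_k,2^{-k-1}\eps)$; then $W(R_{\eps})\le\sum_{k\ge1}W(T(S_k,2^{-k-1}\eps))\le\sum_{k\ge1}2^{-k-1}\eps=\tfrac12\eps<\eps$, and since $G'=G/\lla R_{\eps}\rra$ is a quotient of $G/\lla T(S_k,\cdot)\rra$ for every $k$, the image of each $S_k$ in $G'$ is a quotient of a finite group, hence finite. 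The concluding ``in particular'' is then immediate from Remark~\ref{obs:GS}: if $W(X)-W(R)-1=\eps_0>0$, run the construction with any $\eps<\eps_0$, so that $W(R_{\eps})<\eps_0$ and $G'$ is again Golod-Shafarevich.

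For the key step, write $S=\{s_1,\dots,s_n\}$ and $H=\la S\ra$, and recall $W(s_i)\le\tau<1$ for all $i$ (each nontrivial $s_i$ has Zassenhaus degree $\ge 1$). I would take
$$T\ =\ \{\,[s_{i_1},s_{i_2},\dots,s_{i_N}]\ :\ (i_1,\dots,i_N)\in\{1,\dots,n\}^N\,\}\ \cup\ \{\,s_1^{p^{j_0}},\dots,s_n^{p^{j_0}}\,\},$$
i.e.\ all left-normed commutators of weight $N$ in $S$ together with the $p^{j_0}$-th powers of the generators, for integers $N,j_0$ to be chosen large. For \emph{any} $N$ and $j_0$ the image of $H$ in $G/\lla T\rra$ is finite: it is a quotient of $H/\lla T\rra_H$, and in $H/\lla T\rra_H$ all left-normed commutators of weight $N$ in the generators vanish, so, by the standard fact that $\gamma_N$ of a group is the normal closure of the left-normed weight-$N$ commutators in any generating set, this group is nilpotent of class $<N$; it is also finitely generated and generated by elements of order dividing $p^{j_0}$, hence torsion (in a nilpotent group the elements of finite order form a subgroup), and a finitely generated torsion nilpotent group is finite.

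It remains to choose $N$ and $j_0$ so that $W(T)<\eta$. By Remark~\ref{val:props}(iii) applied iteratively, $\sum_{i=1}^n W(s_i^{p^{j_0}})\le n\tau^{p^{j_0}}$, which is $<\eta/2$ once $j_0$ is large, since $0<\tau<1$. For the commutator part, iterating Remark~\ref{val:props}(ii) along the bracketing of a left-normed commutator (subadditivity of the Zassenhaus degree under commutators) gives $W([s_{i_1},\dots,s_{i_N}])\le\prod_{k=1}^N W(s_{i_k})$, and therefore
$$\sum_{(i_1,\dots,i_N)\in\{1,\dots,n\}^N}W([s_{i_1},\dots,s_{i_N}])\ \le\ \sum_{(i_1,\dots,i_N)}\ \prod_{k=1}^N W(s_{i_k})\ =\ \Big(\sum_{i=1}^n W(s_i)\Big)^N\ =\ W(S)^N,$$
which tends to $0$ as $N\to\infty$ because $W(S)<1$. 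Choosing $N$ with $W(S)^N<\eta/2$ gives $W(T)<\eta$, completing the key step.

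The crux is the displayed inequality $\sum W([s_{i_1},\dots,s_{i_N}])\le W(S)^N$; this is the only place where the weighted/Golod-Shafarevich machinery genuinely enters. There are on the order of $n^N$ relations of weight $\le\tau^N$, so the crude bound (number of relations)$\times$(maximal weight) $\sim (n\tau)^N$ is useless (indeed $n\tau$ may exceed $1$); the point is that one must keep the multiplicative structure of the valuation, so that the sum over all weight-$N$ commutators factors as $W(S)^N$ and the hypothesis $W(S)<1$ can at last be exploited. Everything else is routine: the geometric series $\sum_k 2^{-k-1}\eps$, and the elementary fact that a finitely generated torsion nilpotent group is finite.
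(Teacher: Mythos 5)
Your proposal is correct and follows essentially the same route as the paper's proof: kill high $p$-th powers (via $W(g^p)\le W(g)^p$) to force torsion and kill all left-normed commutators of a large fixed length in each $S$ (via the multiplicative estimate bounding their total weight by $W(S)^k\to 0$) to force nilpotency, then conclude with the fact that a finitely generated torsion nilpotent group is finite. The only cosmetic difference is bookkeeping: the paper kills $p$-powers of an enumeration of \emph{all} elements of $G$, so the whole quotient is torsion, whereas you kill only the powers of the elements of each $S\in\Sigma$ and then invoke that the torsion elements of a nilpotent group form a subgroup — both are equally valid.
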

\begin{proof}
Let $g_1, g_2,\ldots $ be an enumeration of elements of $G$,
and choose integers $n_1,n_2,\ldots$ such that
$\sum_{i\in\dbN} W(g_i^{p^{n_i}})<\eps/2$ -- this is possible
by Remark~\ref{val:props}(iii).

Let $S_1,S_2,\ldots$ be an enumeration of $\Sigma$.
Given $n,k\in\dbN$, let $S_n^{(k)}$ be the set of all left-normed commutators
of length $k$ in elements of $S_n$. Using
Remark~\ref{val:props}(ii) we have
\begin{multline*}
W(S_n^{(k)})=\sum_{h_1,\ldots, h_k\in S_n}W([h_1,\ldots,h_k])\leq \sum_{h_1,\ldots, h_k\in S_n}W(h_1)\ldots W(h_k)=W(S_n)^k,
\end{multline*}
so by our assumption
$W(S_n^{(k)})\to 0$ as $k\to\infty$. Therefore, we can
find an integer sequence $k_1,k_2,\ldots$ such that
$\sum_{n\in\dbN} W(S_n^{(k_n)})<\eps/2$.

Now define $G'=G/\lla R_{\eps}\rra$ where
$R_{\eps}=\{g_i^{p^{n_i}}\}_{i\in\dbN}\cup \bigcup_{n=1}^{\infty} S_n^{(k_n)}.$
Then by construction $W(R_{\eps})<\eps$.
Also by construction, for each $n$ the subgroup generated by the image of $S_n$
in $G'$ is torsion and nilpotent, hence finite.
\end{proof}

We are finally ready to prove Proposition~\ref{cor:GS1} restated below.

\begin{Proposition}
Let $G$ be a Golod-Shafarevich group. Then there exists
a quotient $H$ of $G$ which is also Golod-Shafarevich and satisfies the following
property: for every $n\in\dbN$ there is a finite index subgroup $H_n$ of $H$
such that all $n$-generated subgroups of $H_n$ are finite.
\end{Proposition}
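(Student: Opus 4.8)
The plan is to apply Proposition~\ref{GS1} to a suitably chosen countable family $\Sigma$ of finite subsets of $G$, each consisting of elements of small weight. Killing the relations produced by that proposition will give a quotient $H$ that is still Golod-Shafarevich, and for each $n$ a member of the Zassenhaus $p$-filtration of $H$ will serve as the required finite-index subgroup $H_n\in\Fin_n$.

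Fix a Golod-Shafarevich weighted presentation $(X,R,W)$ of $G$, so $W(f)=\tau^{\deg f}$ for some $\tau\in(0,1)$ and $W(X)-W(R)-1>0$; let $\pi\colon F(X)\to G$ be the canonical epimorphism. For each $n\in\dbN$ fix an integer $k_n$ with $n\tau^{k_n}<1$, and let $\Sigma_n$ be the set of all subsets of $\omega_{k_n}G$ of cardinality at most $n$. Since $G$ is finitely generated, hence countable, $\Sigma:=\bigcup_n\Sigma_n$ is a countable family of finite subsets of $G$, and the point is that $W(S)<1$ for every $S\in\Sigma$. Indeed, the Zassenhaus $p$-filtration is compatible with surjections, so $\pi(\omega_{k_n}F(X))=\omega_{k_n}G$; hence every $s\in\omega_{k_n}G$ has a lift $f\in\omega_{k_n}F(X)$, giving $W(s)\le W(f)=\tau^{\deg f}\le\tau^{k_n}$, and summing over the at most $n$ elements of $S\in\Sigma_n$ yields $W(S)\le n\tau^{k_n}<1$.

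Now apply Proposition~\ref{GS1} to this $\Sigma$, with $\eps$ smaller than the Golod-Shafarevich constant $W(X)-W(R)-1$: it produces $R_\eps\subseteq G$ with $W(R_\eps)<\eps$ such that $H:=G/\lla R_\eps\rra$ is again Golod-Shafarevich (by Remark~\ref{obs:GS}) and the image in $H$ of each $S\in\Sigma$ generates a finite subgroup. (One sees from the construction of $R_\eps$ in the proof of Proposition~\ref{GS1} that it contains a $p$-power of every element of $G$, so in fact $H$ is $p$-torsion, matching Proposition~\ref{cor:GS1}.) Write $\rho\colon G\to H$ for the quotient map and set $H_n:=\omega_{k_n}H$. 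Since $H$ is finitely generated, $H_n$ has $p$-power, hence finite, index in $H$. By the same compatibility, $\rho(\omega_{k_n}G)=\omega_{k_n}H=H_n$, so any $n$-generated subgroup $\langle h_1,\dots,h_n\rangle$ of $H_n$ equals $\langle\rho(S)\rangle$ where $S=\{g_1,\dots,g_n\}$ with $g_i\in\omega_{k_n}G$ chosen so that $\rho(g_i)=h_i$; since $S\in\Sigma_n\subseteq\Sigma$, this subgroup is finite. Thus $H_n\in\Fin_n$, as required.

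The one place that needs care is the bookkeeping invoked twice above, namely that the Zassenhaus $p$-filtration commutes with the surjections $\pi\colon F(X)\to G$ and $\rho\colon G\to H$, i.e.\ $\pi(\omega_mF(X))=\omega_mG$ and $\rho(\omega_mG)=\omega_mH$ for all $m$. This holds because each term $(\gamma_iG)^{p^j}$ of the product defining $\omega_mG$ is mapped by a surjection $\phi$ onto the corresponding term $(\gamma_i\phi(G))^{p^j}$ of $\omega_m\phi(G)$, and a surjection carries a product of subgroups to the product of their images. Granting this, (i) every member of $\Sigma$ has weight $<1$, so Proposition~\ref{GS1} applies, and (ii) every $n$-generated subgroup of $\omega_{k_n}H$ genuinely arises from a member of $\Sigma$; the rest of the argument is a direct application of Proposition~\ref{GS1} and Remark~\ref{obs:GS}.
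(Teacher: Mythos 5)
Your proposal is correct and takes essentially the same route as the paper: apply Proposition~\ref{GS1} to the countable family consisting, for every $n$, of all subsets of size at most $n$ of a suitable finite-index subgroup of $G$ whose elements have small weight, and pass to the resulting quotient $H$. The only cosmetic difference is that the paper uses the weight balls $G_n=\{g\in G: W(g)<1/n\}$ (which contain $\omega_m G$ for $\tau^m<1/n$) and takes $H_n$ to be the image of $G_n$, whereas you work directly with the Zassenhaus terms $\omega_{k_n}G$ and define $H_n=\omega_{k_n}H$ intrinsically, justified by the (correct) observation that the Zassenhaus filtration is mapped onto the corresponding filtration of a quotient.
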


\begin{proof} Let $(X,R,W)$ be a Golod-Shafarevich weighted presentation of $G$.
For every $n\in\dbN$ let $G_n=\{g\in G: W(g)<\frac{1}{n}\}$. Then $G_n$ is a finite
index subgroup of $G$ (more specifically, if $\tau<1$ is such that $W(f)=\tau^{\deg(f)}$ for every $f\in F(X)$,
then $G_n\supseteq \omega_m G$ whenever $\tau^m<\frac{1}{n}$).

Let $\Sigma$ be the collection of all $n$-element subsets of $G_n$, where $n$ ranges over $\dbN$.
By construction $W(S)<1$ for each $S\in\Sigma$, and applying Proposition \ref{GS1} to this collection of subsets,
we obtain a group $H$ with desired properties (where $H_n$ is the image of $G_n$ in $H$).
\end{proof}

\begin{Remark}
We finish with a remark about Theorem~\ref{unbounded_2gen}.
Our original construction of infinite $2$-generated groups with property $(T)$ and unbounded Tarski numbers
was explicit apart from the description of examples of infinite property $(T)$ groups
in $\Fin_m$. Such groups can also be defined by explicit presentations as explained below.

Given an integer $d\geq 2$ and a prime $p$, let $G_{p,d}$ be the group with presentation $\la X|R\ra$
where $X=\{x_1,\ldots,x_d\}$ and $R=\{x_i^p, [x_i,x_j,x_j]\}_{1\leq i\neq j\leq d}$.
By \cite[Theorem~12.1]{Er2}, $G_{p,d}$ is a Golod-Shafarevich group with property $(T)$
whenever $d\geq 9$ and $p>(d-1)^2$. Applying the proof of Proposition~\ref{GS1} to the group $G=G_{p,d}$
and suitable $\Sigma$ and $\eps$, one obtains a concrete example of an infinite group with property $(T)$
which lies in $\Fin_{m}$ for $m<d/2$.

Moreover, observe that the group $G=G_{p,d}$  admits an automorphism $\sigma$
of order $d$ which cyclically permutes the generators. One can show
that the set of relators $R_{\eps}$ in the proof of Proposition~\ref{GS1}
can be chosen $\sigma$-invariant, so that $\sigma$ induces an automorphism $\sigma'$
of the quotient $G'=G/\lla R_{\eps}\rra$. Then the group $G'\rtimes \la \sigma' \ra$
is an infinite $2$-generated group with property $(T)$ whose Tarski number can be made
arbitrarily large by choosing a large enough $d$ (by Theorem~\ref{Tar_combined}(b)).
This provides an alternative proof of Theorem~\ref{unbounded_2gen}.
\end{Remark}


\begin{thebibliography}{AAA}

\bibitem{CSGH}
T.~Ceccherini-Silberstein, R.~Grigorchuk and P.~de~la~Harpe,
\textit{Amenability and paradoxical decompositions for pseudogroups and discrete metric spaces}, Proc. Steklov Inst. Math. 224 (1999), no. 1, 57 --97.

\bibitem{BH} B. Bekka, P. de la Harpe and A. Valette, \textit{Kazhdan's property $(T)$.} New Mathematical Monographs, 11. Cambridge University Press, Cambridge, 2008.

\bibitem{BV} B. Bekka and A. Valette. Group cohomology, harmonic functions and
the first $L^2$-Betti number. Potential Anal., 6(4):313--326, 1997.

\bibitem{EM} I.~Epstein and N.~Monod,
\it Non-unitarisable representations and random forests,
\rm Int. Math. Res. Not. IMRN  2009,  no. 22, 4336–4353.


\bibitem{EJ}
M.~Ershov, \textit{Kazhdan quotients of Golod-Shafarevich groups},
with appendices by A.~Jaikin-Zapirain,
Proc. Lond. Math. Soc. (3)  102 (2011),  no. 4, 599--636.

\bibitem{EJ2}
M.~Ershov and A.~Jaikin-Zapirain,
\it Groups of positive weighted deficiency,
\rm J. Reine Angew. Math.  677 (2013), no. 677, 71--134.


\bibitem{Er2}
M. Ershov,
\it Golod-Shafarevich groups: a survey,
\rm Int. J. Alg. Comp., 22 (2012), no. 5, 68 pages.

\bibitem{ErLu} M.~Ershov and W. L\"uck,
\it The first $L^2$-Betti number and approximation in arbitrary characteristic,
\rm  Doc. Math.  19  (2014), 313–-332.


\bibitem{Gab} D. Gaboriau, Co\^{u}t des relations d'\'equivalence et des groupes. Invent. Math. 139 (2000),
41--98.

\bibitem{Gab1} D. Gaboriau, Invariants $L^2$ de relations d'\'equivalence et de groupes, Publ. math. Inst. Hautes \'Etudes Sci.,  95 (2002), no. 1, 93--150.


\bibitem{Gi} G. Golan,
\it Tarski numbers of group actions,
\rm preprint (2014), \rm http://arxiv.org/abs/1406.5689

\bibitem{Go}
 E.~S.~Golod,
\textit{Some problems of Burnside type},
1968,  Proc. Internat. Congr. Math. (Moscow, 1966)  pp. 284--289, Izdat. "Mir'', Moscow.

\bibitem{GS}
E. Golod and I. Shafarevich,
\it On the class field tower,
\rm Izv. Akad. Nauk SSSR Ser. Mat.  28 (1964), 261--272.

\bibitem{GH} R. Grigorchuk, P. de la Harpe,
\it Limit behaviour of exponential growth rates for finitely generated groups,
\rm Essays on geometry and related topics, Vol. 1, 2, 351--370,
Monogr. Enseign. Math., 38, Enseignement Math., Geneva, 2001.

\bibitem{Gr} {\L}. Grabowski,
\it Open problem session - Oberwolfach, 5-11.09.2010,
\rm \url{http://homepages.warwick.ac.uk/~masmbh/files/oberwolfach_op_session.09.2010.pdf}.


\bibitem{KM} A. Kechris and B. Miller,
\it Topics in orbit equivalence
\rm Lecture Notes in Mathematics, 1852, Springer-Verlag, Berlin (2004).

\bibitem{Luck} W. L\"uck, $L^2$-invariants of regular  coverings of compact manifolds and CW-complexes, Handbook of geometric topology, 735--817, North-Holland, Amsterdam, 2002.

\bibitem{Luck1} W. L\"uck,  $L^2$-invariants: theory and applications to geometry and K-theory.
Ergebnisse der Mathematik und ihrer Grenzgebiete. 3. Folge. A Series of Modern Surveys in Mathematics, 44. Springer-Verlag, Berlin, 2002.

\bibitem{LuOs} W.~L{\"u}ck and D.~Osin.
{\em Approximating the first {$L^2$}-{B}etti number of residually finite
  groups,}
J. Topol. Anal., 3 (2011), no. 2, 153--160.

\bibitem{L} R.~Lyons,
\it Random complexes and $l^2$-Betti numbers.
\rm J. Topol. Anal.  1  (2009),  no. 2, 153--175.

\bibitem{LPV} R. Lyons, M. Pichot and S. Vassout,
\it Uniform non-amenability, cost, and the first $l^2$-Betti number.
\rm Groups Geom. Dyn.  2  (2008),  no. 4, 595--617.

\bibitem{Ma} A. I. Mal'cev,
\it Algebraic systems,
\rm Translated from the Russian by B. D. Seckler and A. P. Doohovskoy, Springer-Verlag, Berlin, Heidelberg and New York (1973).

\bibitem{Ord} B. H. Neumann,
\it On ordered groups.
\rm Amer. J. Math. 71, (1949), 1--18.


\bibitem{HH} B. H. Neumann and H. Neumann,
\it Embedding theorems for groups,
\rm J. London Math. Soc. 34 (1959), 465--479.




\bibitem{Os} D.~Osin,
\textit{$L^2$-Betti numbers and non-unitarizable groups without free subgroups},
Int. Math. Res. Not. 22 (2009),  4220--4231.


\bibitem{OS}
 N.~Ozawa and M.~Sapir,
\textit{Non-amenable groups with arbitrarily large Tarski number?},
mathoverflow question 137678.


\bibitem{PT} J.~Peterson and A.~Thom,
\it Group cocycles and the ring of affiliated operators.
\rm Invent. Math.  185  (2011),  no. 3, 561--592.

\bibitem{Pi}  M.~Pichot,
\it Semi-continuity of the first $l^2$-Betti number on the space of finitely generated groups.
\rm Comment. Math. Helv.  81  (2006),  no. 3, 643--652.




\bibitem{RY} A.~Rejali and A.~Yousofzadeh,
\it Configuration of groups and paradoxical decompositions.
\rm Bull. Belg. Math. Soc. Simon Stevin  18  (2011),  no. 1, 157�-172.

\bibitem{Sa} M.~Sapir,
\emph{Combinatorial algebra: syntax and semantics},\\ http://www.math.vanderbilt.edu/$\sim$msapir/book/b2.pdf.

\bibitem{SP} J.-C. Schlage-Puchta,
\it A $p$-group with positive rank gradient.
\rm J. Group Th. 15 (2012), no. 2, 261--270.


\bibitem{Shalom}  Y. Shalom,
\it The algebraization of Kazhdan's property (T).
\rm International Congress of Mathematicians. Vol. II, 1283--1310, Eur. Math. Soc., Z\"urich, 2006.

\bibitem{Th} A.~Thom,
\it The expected degree of minimal spanning forests,
\rm preprint (2013), arXiv: 1306.0303.

\bibitem{Wa} S.~Wagon,
\it The Banach-Tarski paradox,
\rm Cambridge University Press, (1985).

\bibitem{Zel} E. I. Zelmanov,
\it On additional laws in the Burnside problem on periodic groups.
\rm Internat. J. Algebra Comput. 3 (1993), no. 4, 583--600.
\end{thebibliography}
\end{document}